\RequirePackage{fix-cm}
\documentclass[dvipsnames,smallextended]{svjour3}
\smartqed

\usepackage{mathtools}
\usepackage{autonum}
\usepackage{amssymb}
\usepackage[utf8]{inputenc}
\PassOptionsToPackage{final}{graphicx}
\usepackage{pgfplots}
\usepackage{algorithmic}
\usepackage{algorithm2e}
\usepackage{ifdraft}
\usepackage[colorinlistoftodos]{todonotes}
\usepackage[caption=true]{subfig} 
\usepackage{url}
\usepackage{pdfpages}
	
\title{A multilevel Monte Carlo method for asymptotic-preserving particle schemes in the diffusive limit\thanks{A preliminary version of this
paper, cited as~\cite{Loevbak2020}, was published in \textit{Monte Carlo and Quasi-Monte Carlo Methods 2018}.}}
\titlerunning{An MLMC method for AP-particle schemes in the diffusive limit}
\author{Emil~L{\o}vbak \and Giovanni~Samaey \and Stefan~Vandewalle}

\institute{E.~L{\o}vbak \at
              NUMA Section, Department of Computer Science, KU Leuven, Belgium \\
              \email{emil.loevbak@cs.kuleuven.be} 
              ORCID: 0000-0003-1520-4922          
           \and
           G.~Samaey \at
              NUMA Section, Department of Computer Science, KU Leuven, Belgium \\
              \email{giovanni.samaey@cs.kuleuven.be} 
              ORCID: 0000-0001-8433-4523
           \and
           S.~Vandewalle \at
              NUMA Section, Department of Computer Science, KU Leuven, Belgium \\
              \email{stefan.vandewalle@cs.kuleuven.be} 
              ORCID: 0000-0002-8988-2374
}

\date{Received: date / Accepted: date}

\DeclareMathOperator{\E}{\mathbb{E}}
\DeclareMathOperator{\V}{\mathbb{V}}
\DeclareMathOperator{\Cov}{\mathrm{Cov}}

\renewcommand{\epsilon}{\varepsilon}
\newcommand{\R}{\mathbb{R}}
\newcommand{\CV}{\tilde{v}}
\newcommand{\VD}{\bar{V}}
\newcommand{\VDR}{\bar{V}}
\newcommand{\PT}{u}
\newcommand{\B}{\mathcal{B}}
\newcommand{\D}{\mathcal{D}}
\newcommand{\U}{T}
\newcommand{\dm}{\mathrel{\Delta m}}
\newcommand{\dns}{\mathrel{\Delta n}}
\newcommand{\dn}{{n - n^\prime}}
\newcommand{\pcf}{p_{c,\Delta t_\ell}}
\newcommand{\pcr}{p_{c,\Delta t_{\ell-1}}}
\newcommand{\pncf}{p_{nc,\Delta t_\ell}}
\newcommand{\pncr}{p_{nc,\Delta t_{\ell-1}}}

\newcommand{\mse}{E}
\newcommand{\covone}{\sigma_1}
\newcommand{\covtwo}{\sigma_2}
\newcommand{\dx}{\text{d}x}
\newcommand{\dv}{\text{d}v}

\newcommand{\retainlabel}[1]{\label{#1}\sbox0{\ref{#1}}}


\AtEndEnvironment{proof}{\phantom{}\qed}

\usetikzlibrary{external}
\tikzexternalize
\tikzexternaldisable

\pgfplotsset{compat=newest}
\newlength\figureheight
\newlength\figurewidth
\newlength\oldfw
\newlength\oldfh
\setlength\figureheight{0.3\textwidth}
\setlength\figurewidth{0.88\textwidth}

\newcommand{\logLogSlopeTriangleIncrease}[5]
{

    \pgfplotsextra
    {
        \pgfkeysgetvalue{/pgfplots/xmin}{\xmin}
        \pgfkeysgetvalue{/pgfplots/xmax}{\xmax}
        \pgfkeysgetvalue{/pgfplots/ymin}{\ymin}
        \pgfkeysgetvalue{/pgfplots/ymax}{\ymax}
        \pgfmathsetmacro{\xArel}{#1}
        \pgfmathsetmacro{\yArel}{#3}
        \pgfmathsetmacro{\xBrel}{#1-#2}
        \pgfmathsetmacro{\yBrel}{\yArel}
        \pgfmathsetmacro{\xCrel}{\xArel}
        \pgfmathsetmacro{\lnxB}{\xmin*(1-(#1-#2))+\xmax*(#1-#2)}
        \pgfmathsetmacro{\lnxA}{\xmin*(1-#1)+\xmax*#1}
        \pgfmathsetmacro{\lnyA}{\ymin*(1-#3)+\ymax*#3}
        \pgfmathsetmacro{\lnyC}{\lnyA+#4*(\lnxA-\lnxB)}
        \pgfmathsetmacro{\yCrel}{\lnyC-\ymin)/(\ymax-\ymin)}
        \coordinate (A) at (rel axis cs:\xArel,\yArel);
        \coordinate (B) at (rel axis cs:\xBrel,\yBrel);
        \coordinate (C) at (rel axis cs:\xCrel,\yCrel);

        \draw[#5]   (A)-- node[pos=0.5,anchor=north, below=0pt] {\scriptsize 1}
                    (B)-- 
                    (C)-- node[pos=0.5,anchor=west] {\scriptsize #4}
                    cycle;
    }
}

\newcommand{\logLogSlopeTriangle}[5]
{
\pgfplotsextra
    {
        \pgfkeysgetvalue{/pgfplots/xmin}{\xmin}
        \pgfkeysgetvalue{/pgfplots/xmax}{\xmax}
        \pgfkeysgetvalue{/pgfplots/ymin}{\ymin}
        \pgfkeysgetvalue{/pgfplots/ymax}{\ymax}
        \pgfmathsetmacro{\xArel}{#1}
        \pgfmathsetmacro{\yArel}{#3}
        \pgfmathsetmacro{\xBrel}{#1-#2}
        \pgfmathsetmacro{\yBrel}{\yArel}
        \pgfmathsetmacro{\xCrel}{\xArel}
        \pgfmathsetmacro{\lnxB}{\xmin*(1-(#1-#2))+\xmax*(#1-#2)}
        \pgfmathsetmacro{\lnxA}{\xmin*(1-#1)+\xmax*#1}
        \pgfmathsetmacro{\lnyA}{\ymin*(1-#3)+\ymax*#3}
        \pgfmathsetmacro{\lnyC}{\lnyA-#4*(\lnxA-\lnxB)}
        \pgfmathsetmacro{\yCrel}{\lnyC-\ymin)/(\ymax-\ymin)} 
        \coordinate (A) at (rel axis cs:\xArel,\yArel);
        \coordinate (B) at (rel axis cs:\xBrel,\yBrel);
        \coordinate (C) at (rel axis cs:\xCrel,\yCrel);

        \draw[#5]   (A)-- node[pos=0.5,anchor=north, below=0pt] {\scriptsize 1}
                    (B)-- 
                    (C)-- node[pos=0.5,anchor=east] {\scriptsize #4}
                    cycle;
    }
}

\allowdisplaybreaks





\usepackage{soul}

\begin{document}

\maketitle

\begin{abstract}
Kinetic equations model distributions of particles in position-velocity phase space. Often, one is interested in studying the long-time behavior of particles in high-collisional regimes in which an approximate (advection)-diffusion model holds. In this paper we consider the diffusive scaling. Classical particle-based techniques suffer from a strict time-step restriction in this limit, to maintain stability. Asymptotic-preserving schemes avoid this problem, but introduce an additional time discretization error, possibly resulting in an unacceptably large bias for larger time steps. Here, we present and analyze a multilevel Monte Carlo scheme that reduces this bias by combining estimates using a hierarchy of different time step sizes. We demonstrate how to correlate trajectories from this scheme, using different time steps. We also present a strategy for selecting the levels in the multilevel scheme. Our approach significantly reduces the computation required to perform accurate simulations of the considered kinetic equations, compared to classical Monte Carlo approaches.
\keywords{Transport equations \and diffusion limit \and multilevel Monte Carlo methods \and asymptotic-preserving schemes}
\subclass{65C05 \and 65C35 \and 65M75 \and 76R50 \and 82C70}
\end{abstract}

\section{Introduction}

In many application domains, one encounters kinetic equations, modelling particle behavior in a position-velocity phase space. Examples are plasma physics~\cite{Birdsall2004}, bacterial chemotaxis~\cite{Rousset2011c} and computational fluid dynamics~\cite{Pope1981}. Many of these domains exhibit a strong time-scale separation, leading to an unacceptably high simulation cost \cite{Cercignani1988}. Typically, one is interested in some macroscopic quantities of interest, e.g., some moments of the particle distribution, which are computed as averages over velocity space. The time-scale at which these quantities of interest change is often much slower than that governing the particle dynamics, making these models very stiff problems: a naive simulation requires both small time steps to capture the fast dynamics, and long time horizons to capture the evolution of the macroscopic quantities of interest. The exact nature of the macroscopic behavior depends on the problem scaling, which can be hyperbolic or diffusive~\cite{Dimarco2014}.

We are interested in a $d$-dimensional kinetic equation of the form
\begin{equation}
\label{eq:kinetic}
\partial_t f(x,v,t) + v\cdot\nabla_x f(x,v,t) = Q\left(f(x,v,t)\right), 
\end{equation}
where $f(x,v,t)$ represents the distribution of particles as a function of space $x\in \D_x \subset \R^d$, velocity $v \in \D_v \subset \R^d$ and time $t \in \R^+$ and $Q(f(x,v,t))$ is a collision operator, resulting in discontinuous velocity changes. In this paper, we consider the BGK operator~\cite{Bhatnagar1954}, which linearly drives the velocity distribution to a steady state distribution $\mathcal{M}(v)$. In this case, the operator $Q\left(f(x,v,t)\right)$ is written as \begin{equation}
\label{eq:BGK}
Q\left(f(x,v,t)\right) = \mathcal{M}(v)\rho(x,t)-f(x,v,t),
\end{equation}
in which we have introduced the position density
\begin{equation}
\rho(x,t) = \int_{\D_v} f(x,v,t)\dv.	
\end{equation}
With the collision operator \eqref{eq:BGK}, individual particles follow a velocity-jump process.

To make the time-scale separation explicit, we consider a dimensionless, diffusively scaled version of \eqref{eq:kinetic}. We introduce a parameter $\epsilon$, representing the mean free path. When $\epsilon$ decreases, the average time between collisions decreases.  In the diffusive scaling, we factor out the fast collision time scale by writing the right hand side as $(1/\epsilon) \; Q$, while simultaneously re-scaling time by $1/\epsilon$:
\begin{equation}
\label{eq:kineticdiffusive}
\epsilon\partial_t f(x,v,t) + v\cdot\nabla_x f(x,v,t) = \frac{1}{\epsilon}\left(\mathcal{M}(v)\rho(x,t)-f(x,v,t)\right).
\end{equation}
Taking the diffusion limit $\epsilon \to 0$, we drive the rate of collisions to infinity, while simultaneously increasing the slow time-scale.
It can be shown that, in the limit $\epsilon \to 0$, the particle density resulting from \eqref{eq:kineticdiffusive} converges to the diffusion equation~\cite{Lapeyre2003}
\begin{equation}
\label{eq:heat}
\partial_t \rho(x,t) = \Delta_{xx} \rho(x,t).
\end{equation}

Equation \eqref{eq:kineticdiffusive} can be simulated using a broad selection of methods. Deterministic methods solve the kinetic equation~\eqref{eq:kineticdiffusive} for $f(x,v,t)$ on a grid (using, for instance, finite differences or finite volumes), giving the particle distribution in the position-velocity phase space. This approach quickly becomes computationally infeasible as the dimension grows, as a grid must be formed over the $2d$-dimensional domain, $\D_x \times \D_v$. Stochastic methods, on the other hand, perform simulations of individual particle trajectories, with each trajectory representing a sample of the probability distribution $f(x,v,t)$. These methods do not suffer from the curse of dimensionality, but introduce a statistical error in the computed solution. When using explicit time steps, both approaches become prohibitively expensive for small values of $\epsilon$ due to the time-scale separation.

To avoid the issues caused by time-scale separation, one can use asymptotic-preserving schemes. Such methods preserve the macroscopic limit equation, in our case \eqref{eq:heat}, as $\epsilon$ tends to zero, but do not suffer from the time step constraints caused by the time-scale separation. A large number of such methods have already been developed in literature for deterministic methods in the diffusive limit. For a non-exhaustive list, we refer to~\cite{Bennoune2008,Boscarino2013,Buet2007,Crouseilles2011,Dimarco2012,Gosse2002,Jin1999,Jin1998,Jin2000,Klar1998,Klar1999,Larsen1974,Lemou2008,Naldi2000} and to a recent review paper~\cite{Dimarco2014}, which gives an overview of the current state of the art concerning these methods. In the particle setting, only a few asymptotic-preserving methods exist, mostly in the hyperbolic scaling \cite{Degond2011,Dimarco2008,Dimarco2010,Pareschi1999,Pareschi2001,Pareschi2005}.  In the diffusive scaling, we are only aware of three works~\cite{Crestetto2018,Dimarco2018,Mortier2019}. In this paper, we make use of the scheme proposed in~\cite{Dimarco2018}, where operator splitting was successfully applied to a modified kinetic equation, resulting in an unconditionally stable fixed time step particle method. This stability comes at the cost of an extra bias in the model, proportional to the size of the time step (see Section~\ref{sec:ap_scheme} for details).

In this paper, we present and analyse an approach to eliminate the bias introduced by using the asymptotic-preserving schemes with large time steps, through the use of the multilevel Monte Carlo method~\cite{Giles2008}. Multilevel Monte Carlo methods first compute an initial estimate, using a large number of samples with a large time step (and hence a low cost per sample). This estimate has a small variance, but is expected to have a large bias. Afterwards, the bias in this initial estimate is reduced by performing corrections using a hierarchy of simulations with increasingly smaller time steps. Under correct conditions, far fewer samples with small time steps are needed, compared to a direct Monte Carlo simulation with the smallest time step, resulting in a reduced computational cost. The multilevel Monte Carlo method was first introduced in finance~\cite{Giles2008}, and has since been applied to other fields such as biochemistry~\cite{Anderson2011}, data science~\cite{Hoel2016} and structural engineering~\cite{Blondeel2020}. Recently, the method has been applied to the simulation of large PDEs with random coefficients~\cite{Cliffe2011}, as well as to optimisation on these models~\cite{VanBarel2019}. The method has also recently been extended to higher dimensional parameter spaces as the multi-index Monte Carlo method~\cite{Haji-Ali2016}. A preliminary description of the algorithm presented here was published in~\cite{Loevbak2020}, together with partial numerical results. 

The rest of this paper is structured as follows: In Section~\ref{sec:kinetic_equations}, we present the main ideas behind kinetic equations and the asymptotic-preserving particle scheme. We also describe the model equation that will be used in the numerical experiments. In Section~\ref{sec:mlmc}, we give an overview of the multilevel Monte Carlo method. Section~\ref{sec:correlation} contains the main algorithmic contribution of this paper: we present an algorithm for generating coupled particle trajectories using the asymptotic-preserving Monte Carlo scheme at different levels of the multilevel Monte Carlo hierarchy. Section~\ref{sec:analysis} contains the corresponding numerical analysis. Here, we present some lemmas on the properties of the correlation of the coupled trajectories, as a function of the time step and the time-scale separation $\epsilon$. We then prove convergence of the scheme, making use of these lemmas. In Section~\ref{sec:experiments} we present a strategy for selecting which levels to include in the scheme and motivate this strategy with numerical results. Finally, in Section~\ref{sec:conclusion}, we summarize the main results and discuss possible extensions.

\section{Kinetic equations and asymptotic-preserving particle schemes}
\label{sec:kinetic_equations}
\subsection{Model equation and particle scheme}
\label{sec:diffusive_limit}
For the sake of exposition, we limit this work to one spatial dimension. The proposed method is, however, general. We will explicitly mention where care is needed when extending the method or its analysis to higher-dimensional models. We rewrite \eqref{eq:kineticdiffusive} as 
\begin{equation}
\label{eq:kineticdimless}
\partial_t f(x,v,t) + \dfrac{v}{\epsilon}\partial_x f(x,v,t) = \frac{1}{\epsilon^2}\left(\mathcal{M}(v)\rho(x,t)-f(x,v,t)\right),
\end{equation}
with $x\in\R$, $v\in\R$ and $t\in\R^+$.

Equation \eqref{eq:kineticdimless} can be simulated using particle schemes with  a finite time step of size $\Delta t$. Each particle has a state in the position-velocity phase space $(X,V)$ at each time step $n$, i.e., $X_{p,\Delta t}^n \approx X_p(n\Delta t)$ and $V_{p,\Delta t}^n \approx V_p(n\Delta t)$. We then represent the distribution of particles by an ensemble of $P$ particles, with indices $p \in \{1,\dots,P\}$,\begin{equation}
\label{eq:ensemble}
\left\{\left(X_{p,\Delta t}^n,V_{p,\Delta t}^n\right)\right\}_{p=1}^P.
\end{equation}
Classically, the ensemble \eqref{eq:ensemble} is simulated by operator splitting, which is first order in the time step $\Delta t$~\cite{Pareschi2005a}. For \eqref{eq:kineticdimless}, operator splitting results in two actions for each time step:
\begin{enumerate}
\item \textbf{Transport step.} Each particle's position is updated based on its velocity 
\begin{equation}
\label{eq:transport_step}
X^{n+1}_{p,\Delta t} = X^n_{p,\Delta t} + \Delta t V_{p,\Delta t}^n.
\end{equation}
\item \textbf{Collision step.} Between transport steps, each particle's velocity is either left unchanged (no collision) or re-sampled from $\mathcal{M}(v)$ (collision), i.e.,
\begin{equation}
\label{eq:collision_step}
V_{p,\Delta t}^{n+1} = 
\begin{cases}
V_{p,\Delta t}^{n,*} \sim \frac{1}{\epsilon}\mathcal{M}(v),&\text{with probability } p_{c,\Delta t} = \Delta t/\epsilon^2,\\
V_{p,\Delta t}^n,&\text{otherwise.} 
\end{cases}
\end{equation}
\end{enumerate}

To simplify the analysis in Section \ref{sec:analysis}, we consider a transformation of the velocity distribution $\mathcal{M}(v)$, where it is re-written as a distribution with unit variance by re-scaling both the x-axis and y-axis by its characteristic velocity $\CV$. Mathematically, we write this transformation as
\begin{equation}
\label{eq:velocity_decomposition}
\mathcal{M}(v) = \frac{1}{\CV}\B\left(\frac{v}{\CV}\right)=\frac{1}{\CV}\B(\bar{v}),
\end{equation}
with $\B(\bar{v})$ a probability distribution for which, given $\VDR \sim \B(\bar{v})$,
\begin{equation}
\label{eq:beta_properties}
\E\left[\VDR\right] = 0 \quad \text{and} \quad \V\left[\VDR\right] = 1.
\end{equation}
A visualization of the transformation shown in \eqref{eq:velocity_decomposition} is shown in Figure~\ref{fig:velocity_decomposition}.
\begin{figure}
\centering
\definecolor{mycolor1}{rgb}{0.12156862745098,0.466666666666667,0.705882352941177}
\definecolor{mycolor2}{rgb}{1,0.498039215686275,0.0549019607843137}

\pgfmathdeclarefunction{gauss}{2}{\pgfmathparse{1/(#2*sqrt(2*pi))*exp(-((x-#1)^2)/(2*#2^2))}}

\begin{tikzpicture}

\begin{axis}[%
width=0.6\figurewidth,
height=\figureheight,
at={(0\figurewidth,0\figureheight)},
xlabel={$v$},
ylabel={$\mathcal{M}(v)$},
xmin=-6.4,
xmax=6.4,
ymin=0,
ymax=0.42,
scale only axis,
axis x line=bottom,
axis y line=center,
xtick distance=2,
xticklabels={,-3$\CV$,-2$\CV$,-$\CV$,0,$\CV$,2$\CV$,3$\CV$},
ytick distance=0.199,
yticklabels={,,$\left( \CV\sqrt{2\pi} \right)^{-1}$},
axis on top
]
\addplot [fill=mycolor2!50, draw=none, domain=-6.4:6.4] {gauss(0,2)} \closedcycle;
\path (current bounding box.north east) -- (current bounding box.south east) coordinate[near start](leftcenter);
\end{axis}

\begin{axis}[%
width=0.3\figurewidth,
height=\figureheight,
at={(0.7\figurewidth,0\figureheight)},
xlabel={$\bar{v}$},
ylabel={$\mathcal{B}(\bar{v})$},
xmin=-3.2,
xmax=3.2,
ymin=0,
ymax=0.42,
scale only axis,
axis x line=bottom,
axis y line=center,
xtick={-3,-2,-1,0,1,2,3},
ytick distance=0.399,
yticklabels={,,$\sqrt{2\pi}^{-1}$},
axis on top
]
\addplot [fill=mycolor2!50, draw=none, domain=-3.2:3.2] {gauss(0,1)} \closedcycle;
\path (current bounding box.north west) -- (current bounding box.south west) coordinate[near start](rightcenter);
\end{axis}
\draw[-latex,very thick] (leftcenter) -- (leftcenter-|rightcenter) node[midway,above=2mm,text width=2cm]{\centering Re-scaling for unit variance}; 
\end{tikzpicture}%
\caption{Illustration of \eqref{eq:velocity_decomposition} for the case of a 1D Gaussian velocity distribution with $\CV>1$.
\label{fig:velocity_decomposition}}
\end{figure}

We give two examples:
\begin{itemize}
\item \textbf{Two discrete velocities.} To limit the cost of our simulations, we will consider $\mathcal{M}(v) = \frac{1}{2} \left( \delta_{v,-1} + \delta_{v,1}  \right)$, with $\delta$ the Kronecker delta function, i.e., $v$ can take the values $\pm 1$, with equal probability. In this case, \eqref{eq:kineticdimless} becomes
\begin{equation}
\label{eq:GT}
\begin{dcases}
\partial_t f_+(x,t) + \frac{1}{\epsilon} \partial_x f_+(x,t) = \frac{1}{\epsilon^2} \left( \frac{\rho(x,t)}{2} - f_+(x,t) \right) \\
\partial_t f_-(x,t) - \frac{1}{\epsilon} \partial_x f_-(x,t) = \frac{1}{\epsilon^2} \left( \frac{\rho(x,t)}{2} - f_-(x,t) \right)
\end{dcases},
\end{equation}
with $f_+(x,t)$ the distribution of particles with positive velocity and $f_-(x,t)$ that of particles with negative velocity. The total particle density is given by $\rho(x,t) = f_+(x,t) + f_-(x,t)$. Equation \eqref{eq:GT} is known as the Goldstein-Taylor model~\cite{Goldstein1951}.
\item \textbf{Normal distribution.} Another common choice is $\mathcal{M}(v) = \mathcal{N}(v;0,\CV^2)$, i.e., the normally distributed with expected value 0 and variance $\CV^2$. 
\end{itemize}

Scheme \eqref{eq:transport_step}--\eqref{eq:collision_step} has a severe time step restriction $\Delta t = \mathcal{O}(\epsilon^2)$ when approaching the limit $\epsilon \to 0$. This time step restriction will often result in unacceptably high simulation costs, despite the well-defined limit~\cite{Dimarco2018}.

\subsection{Asymptotic-preserving Monte Carlo scheme\label{sec:ap_scheme}}
In~\cite{Dimarco2018}, an asymptotic-preserving scheme was proposed as a solution to the high simulation cost of \eqref{eq:transport_step}--\eqref{eq:collision_step} in the limit $\epsilon \to 0$. This asymptotic-preserving scheme works by rewriting \eqref{eq:kineticdimless} as
\begin{equation}
\label{eq:GTmod}
\partial_t f + \frac{\epsilon v}{\epsilon^2+\Delta t} \partial_x f = \frac{\tilde{v}^2 \Delta t}{\epsilon^2+\Delta t} \partial_{xx} f + \frac{1}{\epsilon^2+\Delta t} \left(\mathcal{M}(v)\rho - f \right),
\end{equation}
using an approach based on the IMEX discretization. In \eqref{eq:GTmod}, we omit the space, velocity and time dependency of $f(x,v,t)$ and $\rho(x,t)$, for conciseness. In the limit $\epsilon \to 0$, it can be shown that the modified equation \eqref{eq:GTmod} converges to the diffusion limit \eqref{eq:heat}. It can also be shown that, in the limit $\Delta t \to 0$, \eqref{eq:GTmod} converges to the original kinetic equation \eqref{eq:kineticdimless} with a rate $\mathcal{O}(\Delta t)$, see~\cite{Dimarco2018}.

Particle trajectories are now simulated as follows:
\begin{enumerate}
\item \textbf{Transport-diffusion step.} The position of the particle is updated based on its velocity and a Brownian increment
\begin{equation}
\label{eq:transport}
X^{n+1}_{p,\Delta t} = X^n_{p,\Delta t} + V^n_{p,\Delta t} \Delta t + \sqrt{2 \Delta t}\sqrt{D_{\Delta t}}\xi^n_p, \quad V^n_{p} \sim \mathcal{M}_{\Delta t}(v),
\end{equation}
in which we have taken $\xi_p^n \sim \mathcal{N}(0,1)$ and introduced a $\Delta t$-dependent velocity distribution $\mathcal{M}_{\Delta t}(v)$ and diffusion coefficient $D_{\Delta t}$:
\begin{equation}
\mathcal{M}_{\Delta t}(v) = \frac{\epsilon}{\epsilon^2+\Delta t} \mathcal{M}(v), \qquad D_{\Delta t}=\frac{\tilde{v}^2\Delta t}{\epsilon^2 + \Delta t}.
\end{equation}
We define the characteristic velocity of $\mathcal{M}_{\Delta t}(v)$ as $\displaystyle \CV_{\Delta t} = \frac{\epsilon}{\epsilon^2+\Delta t}\CV.$
\item \textbf{Collision step.} During collisions, each particle's velocity is updated as:
\begin{equation}
\label{eq:collision}
V_{p,\Delta t}^{n+1} = 
\begin{cases}
V_{p,\Delta t}^{n,*} \sim \mathcal{M}_{\Delta t}(v),&\text{with probability } p_{c,\Delta t} = \dfrac{\Delta t}{\epsilon^2+\Delta t},\\
V_{p,\Delta t}^n,&\text{otherwise.} 
\end{cases}
\end{equation}
\end{enumerate}
For the Goldstein-Taylor model, sampling the time-step dependent velocity distribution $\mathcal{M}_{\Delta t}(v)$ means multiplying the characteristic velocity $\CV_{\Delta t}$ with $\pm 1$ with equal probability, which satisfies \eqref{eq:velocity_decomposition}--\eqref{eq:beta_properties}. For more details on the scheme \eqref{eq:transport}--\eqref{eq:collision} see~\cite{Dimarco2018}.

\section{Multilevel Monte Carlo method}
\label{sec:mlmc}
We want to calculate the value of a quantity of interest (QoI) $Y(t^*)$, which is the integral of a function $F(x,v)$ of the particle position $X(t)$ and velocity $V(t)$ at time $t=t^*$ with respect to $f(x,v,t)$. In practical applications, integration is commonly performed only over the velocity space, giving a quantity of interest that is time and space dependent, e.g., a density or flux. Here, we opt to also integrate over position space, i.e.,
\begin{equation}
\label{eq:QoI}
Y(t^*) = \E \left[F\left(X(t^*),V(t^*)\right)\right] = \int_{\D_v}\int_{\D_x} F(x,v) f(x,v,t^*)\dx\dv.
\end{equation}
This choice simplifies the notation in the remainder of this paper, while also decreasing the simulation cost of our test problems, facilitating an in-depth numerical analysis.

A classical Monte Carlo estimator $\hat{Y}(t^*)$ for $Y(t^*)$ is given by
\begin{equation}
\label{eq:MCestimator}
\hat{Y}(t^*) = \frac{1}{P}\sum_{p=1}^P F\left(X^N_{p,\Delta t}, V^N_{p,\Delta t}\right), \quad t^* = N \Delta t,
\end{equation}
with particles $\left(X^N_{p,\Delta t}, V^N_{p,\Delta t}\right)$, simulated using the time discretization \eqref{eq:transport}--\eqref{eq:collision}.

Given a fixed cost budget for the estimator $\hat{Y}(t^*)$, i.e., a maximal value for the product of the number of time steps and particle simulations $N \times P$, we have to make a trade-off. On the one hand, if we choose to perform more accurate simulations by taking a small $\Delta t$, the individual trajectories will have a small bias, but the required number of time steps $N$ will be very large. As a consequence, we can only simulate a limited number of trajectories $P$. Given that the variance of \eqref{eq:MCestimator} is given by
\begin{equation}
\V\left[\hat{Y}(t^*)\right] = \frac{1}{P}\V\left[F\left(X^N_{p,\Delta t}, V^N_{p,\Delta t}\right)\right],
\end{equation}
the estimated quantity of interest $\hat{Y}(t^*)$ will have a large variance, due to insufficient sampling of the trajectory space. On the other hand, if we choose to simulate a large number of particles $P$, reducing the number of time steps $N$ by choosing a larger $\Delta t$, the estimator variance will be smaller, but the simulation bias will be larger due to the time discretization error.

The core idea behind the multilevel Monte Carlo method (MLMC)~\cite{Giles2008,Heinrich2001} is to avoid this trade-off by combining estimators based on trajectories with different time step sizes. This is done by starting with a coarse time step size $\Delta t_0$. At this coarse level, we can cheaply simulate a large number of trajectories $P_0$, as the number of required time steps $N_0$ to reach the end time $t^*$ is small. This estimator has a large bias, but a low variance, and is given by
\begin{equation}
\label{eq:MC0estimator}
\hat{Y}_0(t^*) = \frac{1}{P_0}\sum_{p=1}^{P_0} F\left(X^{N_0}_{p,\Delta t_0},V^{N_0}_{p,\Delta t_0}\right), \quad t^* = N_0 \Delta t_0.
\end{equation}
The estimator $\hat{Y}_0(t^*)$ is then refined upon by a sequence of $L$ difference estimators at levels $\ell=1,\ldots,L$. Each difference estimator uses an ensemble of $P_\ell$ particle pairs
\begin{equation}
\label{eq:MClestimator}
\hat{Y}_\ell(t^*) = \frac{1}{P_\ell}\sum_{p=1}^{P_\ell}\left(F\left(X^{N_\ell}_{p,\Delta t_\ell},V^{N_\ell}_{p,\Delta t_\ell}\right)-F\left(X^{N_{\ell-1}}_{p,\Delta t_{\ell-1}},V^{N_{\ell-1}}_{p,\Delta t_{\ell-1}}\right)\right),\quad t^* = N_\ell \Delta t_\ell.
\end{equation}
Each particle pair consists of two coupled particles: a particle with a fine time step $\Delta t_\ell$ and a particle with a coarse time step $\Delta t_{\ell-1} = M \Delta t_\ell$, with $M$ a positive integer. The variance of one such particle pair
\begin{equation}
\V \left[ F\left(X^{N_\ell}_{p,\Delta t_\ell},V^{N_\ell}_{p,\Delta t_\ell}\right)-F\left(X^{N_{\ell-1}}_{p,\Delta t_{\ell-1}},V^{N_{\ell-1}}_{p,\Delta t_{\ell-1}}\right) \right] 
\end{equation}
can be decomposed as 
\begin{align}
&\V \left[ F\left(X^{N_\ell}_{p,\Delta t_\ell},V^{N_\ell}_{p,\Delta t_\ell}\right) \right] + \V \left[ F\left(X^{N_{\ell-1}}_{p,\Delta t_{\ell-1}},V^{N_{\ell-1}}_{p,\Delta t_{\ell-1}}\right) \right] \\
&\qquad - 2 \Cov \left( F\left(X^{N_\ell}_{p,\Delta t_\ell},V^{N_\ell}_{p,\Delta t_\ell}\right) , F\left(X^{N_{\ell-1}}_{p,\Delta t_{\ell-1}},V^{N_{\ell-1}}_{p,\Delta t_{\ell-1}}\right)  \right).
\end{align}
If we ensure that $F\left(X^{N_\ell}_{p,\Delta t_\ell},V^{N_\ell}_{p,\Delta t_\ell}\right)$ and $F\left(X^{N_{\ell-1}}_{p,\Delta t_{\ell-1}},V^{N_{\ell-1}}_{p,\Delta t_{\ell-1}}\right)$ are sufficiently correlated, then we expect that
\begin{equation}
\V \left[ F\left(X^{N_\ell}_{p,\Delta t_\ell},V^{N_\ell}_{p,\Delta t_\ell}\right)-F\left(X^{N_{\ell-1}}_{p,\Delta t_{\ell-1}},V^{N_{\ell-1}}_{p,\Delta t_{\ell-1}}\right) \right] \leq
\V \left[ F\left(X^{N_\ell}_{p,\Delta t_\ell},V^{N_\ell}_{p,\Delta t_\ell}\right) \right].
\end{equation}
This means that fewer particle simulations are needed in order to achieve a given variance when estimating \eqref{eq:MClestimator}, as compared to a single level estimator using a time step size $\Delta t_\ell$. We achieve correlation in the sampled quantity of interest by making each particle pair undergo correlated simulations, which intuitively can be understood as an attempt to let two particles follow the same trajectory for two different simulation accuracies. We give detailed explanation on how this is achieved in Section~\ref{sec:correlation}. 
 
One can interpret the difference estimator as using the fine simulation to estimate the bias in the coarse simulation. Given a sequence of levels $\ell \in \{0,\dots,L\}$, with decreasing step sizes, and the corresponding estimators given by \eqref{eq:MC0estimator}--\eqref{eq:MClestimator}, the multilevel Monte Carlo estimator for the quantity of interest $Y(t^*)$ is computed by the telescopic sum
\begin{equation}
\label{eq:telescopic}
\hat{Y}(t^*) = \sum_{\ell=0}^{L} \hat{Y}_\ell(t^*).
\end{equation}
It is clear that the expected value of the estimator \eqref{eq:telescopic} is the same as that of \eqref{eq:MCestimator}, with the finest time step $\Delta t = \Delta t_L$. Given a sufficiently quick reduction in the number of simulated (pairs of) trajectories $P_\ell$ as $\ell$ increases, it is possible to show that the multilevel Monte Carlo estimator is able to achieve the same mean square error as the classical Monte Carlo estimator at a lower computational cost.

For a detailed overview of the multilevel Monte Carlo method and its properties, we refer the reader to~\cite{Giles2015}. Here, we limit ourselves to mentioning the theorem originally presented by Giles in~\cite{Giles2008} and further generalized in~\cite{Cliffe2011} and~\cite{Giles2015}, which gives an upper bound for the computational complexity of multilevel Monte Carlo. 

\begin{remark}[Notation]
For conciseness, we use $\hat{F}_\ell$ instead of $F\left(X^{N_\ell}_{p,\Delta t_\ell},V^{N_\ell}_{p,\Delta t_\ell}\right)$ further in this work when considering approximations of the function $F\left(X(t^*),V(t^*)\right)$ at level $\ell$. For $F\left(X(t^*),V(t^*)\right)$ we use the short-hand notation $F$.  Additionally, we introduce the requested bound on the mean square error $E$, and a series of positive constants $\alpha$, $\beta$, $\gamma$, $c_1$, $c_2$, $c_3$ and $c_4$, which are problem-dependent. In the theorem, $e$ is used to denote Euler's constant.
\end{remark}

\begin{theorem}
\label{thm:giles}
Let $F$ denote a random variable, and let $\hat{F}_\ell$ denote the
corresponding level $\ell$ numerical approximation. If there exist independent estimators $\hat{Y}_\ell$ based on $P_\ell$ Monte Carlo samples, each with expected cost $C_\ell$ and variance $V_\ell$, and positive constants $\alpha$, $\beta$, $\gamma$, $c_1$, $c_2$, $c_3$ such that $\alpha \geq \frac{1}{2} \min(\beta, \gamma)$ and
\begin{enumerate}
\item $\E\left[\hat{Y}_\ell\right]= \begin{cases} \E\left[\hat{F}_0\right]&\ell = 0,\\
                                  \E\left[\hat{F}_\ell - \hat{F}_{\ell-1}\right]&\ell > 0,\end{cases}$ \hfill{(Estimator notation)}
\item $\left|\E\left[\hat{F}_\ell - F\right]\right| \leq c_1 2^{-\alpha\ell}$, \hfill{(Bias decreases with increasing $\ell$)}
\item $V_\ell \leq c_2 2^{-\beta \ell}$, \hfill{(Variance decreases with increasing $\ell$)}
\item $C_\ell \leq c_3 2^{\gamma \ell}$, \hfill{(Cost increases with increasing $\ell$)}
\end{enumerate}
then there exists a positive constant $c_4$ such that for any $\mse < e^{-1}$ there are values $L$ and $N_\ell$ for which the multilevel estimator \eqref{eq:telescopic} has a mean square error with bound $\text{MSE} \equiv \E (\hat{Y} - \E[F])^2 < \mse^2$ with a computational complexity $C$ with bound $$\E\left[C\right]\leq \begin{cases} c_4 \mse^{-2}, &\beta > \gamma,\\
c_4 \mse^{-2}\log^2 \mse, &\beta = \gamma,\\
c_4 \mse^{-2-\frac{(\gamma - \beta)}{\alpha}}, &\beta < \gamma. \end{cases}$$
\end{theorem}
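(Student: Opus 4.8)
The plan is to follow the classical argument of Giles~\cite{Giles2008} (see also~\cite{Cliffe2011,Giles2015}): split the mean square error of the telescoping estimator \eqref{eq:telescopic} into a squared-bias term and a variance term, force each below $\mse^2/2$, and then pick the per-level sample counts $P_\ell$ that minimize the total cost subject to the variance budget. \textbf{Step 1 (MSE split and choice of $L$).} Since the $\hat Y_\ell$ are independent and assumption~1 gives $\E[\hat Y]=\E[\hat F_L]$, we have
\begin{equation*}
\text{MSE} = \bigl(\E[\hat F_L]-\E[F]\bigr)^2 + \sum_{\ell=0}^{L}\frac{V_\ell}{P_\ell},
\end{equation*}
and assumption~2 bounds the first term by $\bigl(c_1 2^{-\alpha L}\bigr)^2$. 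Choosing $L$ as the smallest integer with $c_1 2^{-\alpha L}\le \mse/\sqrt2$ makes the squared bias at most $\mse^2/2$ and forces $2^{L}=\Theta(\mse^{-1/\alpha})$; the hypothesis $\mse<e^{-1}$ guarantees $L\ge 1$ and keeps the logarithms below positive.

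\textbf{Step 2 (optimal sample counts).} With $L$ fixed it remains to choose $P_\ell$ with $\sum_\ell V_\ell/P_\ell \le \mse^2/2$ minimizing $\sum_\ell P_\ell C_\ell$. Relaxing $P_\ell$ to the reals, a Lagrange-multiplier (equivalently Cauchy--Schwarz) computation gives the optimum $P_\ell\propto\sqrt{V_\ell/C_\ell}$; rounding up, take
\begin{equation*}
P_\ell = \left\lceil 2\mse^{-2}\sqrt{V_\ell/C_\ell}\sum_{k=0}^{L}\sqrt{V_k C_k}\,\right\rceil,
\end{equation*}
which meets the variance constraint and yields the total cost bound
\begin{equation*}
\sum_{\ell=0}^{L} P_\ell C_\ell \le 2\mse^{-2}\Bigl(\sum_{\ell=0}^{L}\sqrt{V_\ell C_\ell}\Bigr)^{2} + \sum_{\ell=0}^{L} C_\ell .
\end{equation*}

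\textbf{Step 3 (summing the geometric progressions).} Using assumptions~3 and~4, $\sqrt{V_\ell C_\ell}\le\sqrt{c_2 c_3}\,2^{(\gamma-\beta)\ell/2}$. For the leading term: if $\beta>\gamma$ this geometric series converges to an $\mse$-independent constant, giving cost $O(\mse^{-2})$; if $\beta=\gamma$ it equals $L+1=O(\log\mse^{-1})$, giving $O(\mse^{-2}\log^2\mse)$; if $\beta<\gamma$ it is dominated by its last term $2^{(\gamma-\beta)L/2}$, so its square is $O(2^{(\gamma-\beta)L})=O(\mse^{-(\gamma-\beta)/\alpha})$ and the cost is $O(\mse^{-2-(\gamma-\beta)/\alpha})$. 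The rounding remainder satisfies $\sum_\ell C_\ell\le c_3\sum_\ell 2^{\gamma\ell}=O(2^{\gamma L})=O(\mse^{-\gamma/\alpha})$, and the hypothesis $\alpha\ge\tfrac12\min(\beta,\gamma)$ is precisely what makes this no larger than the leading term in every regime (it gives $\gamma/\alpha\le2$ when $\beta>\gamma$, and $\beta\le2\alpha$, i.e.\ $\gamma/\alpha\le 2+(\gamma-\beta)/\alpha$, when $\beta\le\gamma$). Absorbing all multiplicative constants into a single $c_4$ yields the stated bounds.

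The delicate point, and the only place real care is needed, is the integer rounding: one must verify that replacing the continuous optimum of $P_\ell$ by $\lceil\cdot\rceil$ (and rounding $L$ up) still respects the variance target while inflating the cost only by the benign additive term $\sum_\ell C_\ell$, and then that this term is dominated by the main term exactly under $\alpha\ge\tfrac12\min(\beta,\gamma)$. Everything else is elementary manipulation of finite geometric sums, so I would carry out Steps~1--3 in full and treat the rounding bookkeeping explicitly.
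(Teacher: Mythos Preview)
Your proposal is correct and is precisely the classical Giles argument from~\cite{Giles2008,Cliffe2011,Giles2015}. Note, however, that the paper does not actually prove this theorem: it is quoted from the literature, followed only by an informal paragraph of intuition about the three regimes $\beta\gtrless\gamma$. So there is no ``paper's own proof'' to compare against; what you have written is exactly the argument the paper defers to its references, and your identification of the integer-rounding bookkeeping (and the role of $\alpha\ge\tfrac12\min(\beta,\gamma)$ in absorbing the $\sum_\ell C_\ell$ remainder) matches the standard treatment.
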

The essential idea behind the theorem is the following. If the variance decreases faster than the cost increases, $\beta > \gamma$, then most of the work will be done at coarser levels. In this case the same complexity is achieved as a single level Monte Carlo simulation at the coarsest level. This is case in which the method is the most effective. If the variance decreases slower than the cost increases, $\beta < \gamma$, then most of the work is done at the finer levels. In this case the multilevel Monte Carlo has a much better asymptotic complexity as the finest level requires $\mathcal{O}(1)$ samples, each with cost $\mathcal{O}\left( \mse^{-\gamma / \alpha} \right)$. However, the constant $c_4$ will be large, so the multilevel speedup will be relatively small. In the intermediate case, $\beta = \gamma$, the computational cost is spread over the levels. In this case the $\log^2 \mse$ factor corresponds with the number of required levels. We will refer back to this theorem in Section~\ref{sec:proof}, where we demonstrate the convergence of our scheme.

\section{Correlating particle trajectories}
\label{sec:correlation}

Recall that the time steps at levels $\ell$ and $\ell-1$ are related through $\Delta t_{\ell-1}=M\Delta t_{\ell}$. At the fine level $\ell$, we therefore define a sub-step index $m \in \{1,\dots, M\}$, i.e., we write $X^{n,m}_{p,\Delta t_\ell}\approx X_p(n\Delta t_{\ell-1}+m\Delta t_{\ell})\equiv X_p((nM+m)\Delta t_{\ell})$. Subsequently, we introduce a coupled pair of simulations spanning a time step with size $\Delta t_{\ell-1}$: (i) a simulation at level $\ell-1$, using a single time step of size $\Delta t_{\ell-1}$ and (ii) a simulation at level $\ell$, using $M$ time steps of size $\Delta t_\ell$:
\begin{equation}
\label{eq:coupled_simulation}
\begin{dcases}
X^{n+1}_{p,\Delta t_{\ell-1}} = X^{n}_{p,\Delta t_{\ell-1}} + \Delta t_{\ell-1}V_{p,\Delta t_{\ell-1}}^n + \sqrt{2 \Delta t_{\ell-1}} \sqrt{D_{\Delta t_{\ell-1}}} \xi^{n}_{p,\ell-1} \\
X^{n+1,0}_{p,\Delta t_{\ell}} = X^{n,0}_{p,\Delta t_{\ell}} + \sum_{m=0}^{M-1} \left( \Delta t_\ell V_{p,\Delta t_\ell}^{n,m} + \sqrt{2 \Delta t_{\ell}} \sqrt{D_{\Delta t_\ell}} \xi^{n,m}_{p,\ell}\right)
\end{dcases},
\end{equation}
with $\xi^{n}_{p,\ell-1}, \xi^{n,m}_{p,\ell} \sim \mathcal{N}(0,1)$, $V_{p,\Delta t_{\ell-1}}^n \sim \mathcal{M}_{\Delta t_{\ell-1}}(v)$ and $V_{p,\Delta t_{\ell}}^{n,m} \sim \mathcal{M}_{\Delta t_\ell}(v)$. 

The differences in \eqref{eq:MClestimator} will only have low variance if the simulated paths up to $X^{N_\ell}_{\Delta t_\ell,p} = X^{N_{\ell-1},0}_{\Delta t_\ell,p}$ and $X^{N_{\ell-1}}_{\Delta t_{\ell-1},p}$ are correlated. To achieve correlation, we need to perform coupled simulations of the scheme \eqref{eq:transport}--\eqref{eq:collision} with different time steps. In each time step using the asymptotic-preserving particle scheme, there are two  sources of stochastic behavior. On the one hand, a new Brownian increment $\xi_p^n$ is generated for each particle in each transport-diffusion step~\eqref{eq:transport}. On the other hand, in each collision step~\eqref{eq:collision}, a fraction of particles randomly get a new velocity $V_p^{n,*}$ for use in the following time step. Particle trajectories can be coupled by correlating the random numbers used for the individual particles in the transport-diffusion and collision phase of each time step. 

To this end, we will not draw independent samples at level $\ell-1$, but instead compute the values $\xi^n_{p,\ell-1}$ and $V_{p,\Delta t_{\ell-1}}^{n+1} = V_{p,\Delta t_{\ell-1}}^{n,*}$ at level $\ell-1$ based on the respective values of $\xi^{n,m}_{p,\ell}$ and $V_{p,\Delta t_\ell}^{n,m,*}$ in the fine simulation, while ensuring their correct statistical distribution. To achieve this, we thus first perform the $M$ fine simulation steps. Using the random numbers in these $M$ sub-steps, we then compute values with which to define the value of the random numbers in the single step of size $\Delta t_{\ell-1}$. The way in which we compute these dependent coarse random numbers at level $\ell-1$ is described in Algorithm~\ref{alg:correlation}. If the numbers at level $\ell-1$ have the correct statistical distribution, the coarse simulation statistics are not affected by the introduced correlation. Generalization of Algorithm~\ref{alg:correlation} to higher dimensional domains for the position and velocity, can be done by simply replacing the random values $\xi$ and $\VD$ with random vector quantities of equal dimension to the respective domains.

\begin{algorithm}[]
\begin{algorithmic}[1]
\FOR{Each time step $n$}
\FOR{$m = 0 \dots M-1$}
\STATE Simulate \eqref{eq:transport}--\eqref{eq:collision} with $\Delta t_\ell$, saving the $\xi^{n,m}_{p,\ell}$, $\PT^{n,m}_{p,\ell}$ and $\VD^{n,m}_{p,\ell}$
\ENDFOR
\STATE Set $\xi^{n}_{p,\ell-1} = \frac{1}{\sqrt{M}}\sum_{m=0}^{M-1} \xi^{n,m}_{p,\ell}$
\STATE Set $\PT_{p,\ell-1}^n = \left( \max_m \PT_{p,\ell}^{n,m} \right)^M$
\IF{$\PT_{p,\ell-1}^n \geq p_{nc,\Delta t_{\ell-1}}$}
\STATE Set $\VD_{p,{\ell-1}}^{n+1} = \VD_{p,{\ell-1}}^{n,M,*}$
\ELSE
\STATE Set $\VD_{p,{\ell-1}}^{n+1} = \VD_{p,{\ell-1}}^{n}$
\ENDIF
\ENDFOR
\end{algorithmic}
¨ \caption{Performing correlated simulation steps. The values $u^{n,m}_{p,\ell}, u^{n}_{p,\ell-1} \sim \mathcal{U}([0,1])$ are compared with the respective probabilities $\pncf$ and $\pncr$. If they are larger than the probability that no collision takes place, then a collision takes place in the given time step.\label{alg:correlation}}
\end{algorithm}

In the remainder of this section we will motivate the different steps in Algorithm~\ref{alg:correlation}. To achieve correlated simulations, we need to couple two sources of random behavior in \eqref{eq:coupled_simulation}. On the one hand, we generate a new normally distributed $\xi_{p,\ell}^n$ in each transport-diffusion step. On the other hand, there is a possibility that a collision occurs at each simulation step, causing the selection of a new velocity $V_{p,\ell}^n$. To discuss how correlation is introduced in both of these cases, we decompose each step in \eqref{eq:coupled_simulation} into two parts: the transport part and the diffusion part.  Defining transport increments
\begin{equation}
\label{eq:transport_increments}
\Delta \U^n_{p,\ell-1} = \Delta t_{\ell-1}V_{p,\Delta t_{\ell-1}}^n , \quad
\Delta \U^{n,m}_{p,\ell} = \Delta t_\ell V_{p,\Delta t_\ell}^{n,m},
\end{equation}
and  Brownian increments
\begin{equation}
\label{eq:brownian_increments}
\Delta W^n_{p,\ell-1} = \sqrt{2\Delta t_{\ell-1}} \sqrt{D_{\ell-1}} \xi^{n}_{p,\ell-1} , \quad
\Delta W^{n,m}_{p,\ell} = \sqrt{2\Delta t_{\ell}} \sqrt{D_{\ell}} \xi^{n,m}_{p,\ell},
\end{equation}
we can rewrite \eqref{eq:coupled_simulation} as
\begin{equation}
\label{eq:deltaX}
\begin{dcases}
X^{n+1}_{p,\Delta t_{\ell-1}} = X^{n}_{p,\Delta t_{\ell-1}} + \Delta \U^n_{p,\ell-1} + \Delta W^n_{p,\ell-1} \\
X^{n+1,0}_{p,\Delta t_{\ell}} = X^{n,0}_{p,\Delta t_{\ell}} + \sum_{m=0}^{M-1} \left( \Delta \U^{n,m}_{p,\ell} + \Delta W^{n,m}_{p,\ell}\right)
\end{dcases},
\end{equation}

We first motivate the correlation of the Brownian increments in line 5 of Algorithm~\ref{alg:correlation} (Section~\ref{sec:correlation_brownian}). Next, we motivate the correlation of the transport increments in lines 6--11 of Algorithm~\ref{alg:correlation} (Section~\ref{sec:correlation_transport}). After giving the motivation for the algorithm, we briefly show that this algorithm indeed achieves correlated simulations in Section~\ref{sec:correlation_variance}.

\subsection{Brownian increments}
\label{sec:correlation_brownian}

We now consider just the Brownian increments \eqref{eq:brownian_increments} for two processes spanning a time interval $\Delta t_{\ell-1}$, one spanning the interval in a single time step and the other taking $M$ time steps of size $\Delta t_\ell$. After $M$ fine Brownian increments $\Delta W^{n,m}_{p,\ell}$ at level $\ell$, according to \eqref{eq:brownian_increments}, we get a Brownian increment spanning $\Delta t_{\ell-1}$,
\begin{equation}
\sum_{m=0}^{M-1} \Delta W^{n,m}_{p,\ell} = \sqrt{2\Delta t_{\ell}} \sqrt{D_{\ell}} \sum_{m=0}^{M-1} \xi^{n,m}_{p,\ell} .
\end{equation}
Given that
\begin{equation}
\V\left[\sum_{m=0}^{M-1} \xi^{n,m}_{p,\ell} \right] = M,
\end{equation}
we can compute a standard normally distributed $\xi^{n}_{p,\ell-1}$ from the $\xi^{n,m}_{p,\ell}$ as
\begin{equation}
\label{eq:xicorr}
\xi^{n}_{p,\ell-1} = \frac{1}{\sqrt{M}}\sum_{m=0}^{M-1} \xi^{n,m}_{p,\ell} \sim \mathcal{N}(0,1),
\end{equation}
giving us the expression on line 5 of Algorithm~\ref{alg:correlation}.

\begin{figure}
\centering
%
%
\definecolor{mycolor1}{rgb}{0.12156862745098,0.466666666666667,0.705882352941177}
\definecolor{mycolor2}{rgb}{1,0.498039215686275,0.0549019607843137}
\begin{tikzpicture}

\begin{axis}[%
width=\figurewidth,
height=\figureheight,
at={(0\figurewidth,0\figureheight)},
scale only axis,
xlabel={Time},
ylabel={Position},
xmin=0,
xmax=10,
ymin=-5,
ymax=1,
axis background/.style={fill=white},
legend entries={{$X^{n}_{p,\Delta t_{\ell}}$},{$X^{n}_{p,\Delta t_{\ell-1}}$}},
legend cell align={left},
legend style={at={(0.03,0.08)}, anchor=south west, draw=white!80.0!black}
]
\addlegendimage{mycolor1, mark=square*, mark size=1}
\addlegendimage{mycolor2, mark=*, mark size=1}
\addplot [semithick, color=mycolor1, mark=square*, mark options={}, mark size=1pt, forget plot]
  table[row sep=crcr]{%
0	0\\
0.2	-0.0884229073368045\\
0.4	0.175180492605981\\
0.6	0.25243589000908\\
0.8	-0.181752229749213\\
1	0.218474835734754\\
1.2	0.347943508695561\\
1.4	0.404938268644316\\
1.6	0.622185198819496\\
1.8	0.732403782657772\\
2	0.335438526543269\\
2.2	0.266990953728434\\
2.4	0.205408912753681\\
2.6	-0.01890677949138\\
2.8	0.69033036977426\\
3	0.321090456745502\\
3.2	0.117096120129971\\
3.4	-0.18311135711372\\
3.6	-0.678202746988402\\
3.8	-0.759258044550535\\
4	-0.874816199941612\\
4.2	-0.229680982501482\\
4.4	-0.334679033178855\\
4.6	-0.783390806676384\\
4.8	-0.107313847625855\\
5	0.413272590199273\\
5.2	0.316453577390658\\
5.4	-0.318599113881461\\
5.6	-0.506070662020094\\
5.8	-0.571821175830221\\
6	-0.455420579551636\\
6.2	-0.565536841243248\\
6.4	-0.378573746682483\\
6.6	-0.213336639530805\\
6.8	-0.7406691684776\\
7	-1.14036458813111\\
7.2	-1.45284235408876\\
7.4	-1.66695703338024\\
7.6	-1.80212353510279\\
7.8	-1.79686612564391\\
8	-3.07407943703199\\
8.2	-3.26677372892507\\
8.4	-2.74291148366623\\
8.6	-3.19267228421021\\
8.8	-2.7989779229455\\
9	-2.65126961941534\\
9.2	-2.66349952322763\\
9.4	-2.5865709347805\\
9.6	-3.24645649084197\\
9.8	-3.28210146529928\\
10	-2.60581830361661\\
};
\addplot [semithick, color=mycolor2, mark=*, mark options={}, mark size=1pt, forget plot]
  table[row sep=crcr]{%
0	0\\
1	0.293114750445606\\
2	0.450038008573871\\
3	0.430788052925638\\
4	-1.1736890945325\\
5	0.554463362953792\\
6	-0.611010824537885\\
7	-1.52995964291682\\
8	-4.12431035366269\\
9	-3.55705145741561\\
10	-3.49607211833995\\
};
\end{axis}

\end{tikzpicture}%
\caption{Correlated diffusion steps with $\epsilon=0.5$, $\Delta t_\ell=0.2$ and $\Delta t_{\ell-1}=1$.
\label{fig:corrpathdiff}}
\end{figure}
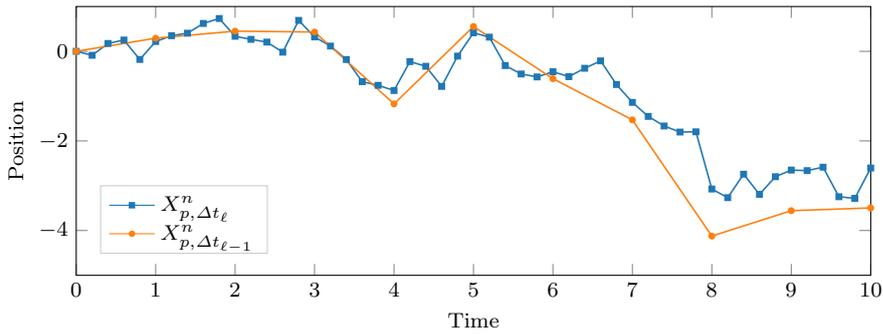

Correlating the simulations in this way means that both simulations follow the same Brownian path, and differences in the diffusion behavior only result from differences in the diffusion coefficients $D_\ell$ and $D_{\ell-1}$ due to the different time steps. In Figure \ref{fig:corrpathdiff} we show two particle trajectories, containing a series of increments $\Delta W^{n,m}_{p,\ell}$ and $\Delta W^{n}_{p,\ell-1}$, using coupled normally distributed numbers as described in \eqref{eq:xicorr}, with $\epsilon=0.5$, $\Delta t_\ell=0.2$ and $M=5$. We observe that the paths have similar behavior, i.e., if the fine simulation tends towards negative values, so does the coarse simulation and vice versa. Still, there is an observable difference between them. This is due to difference in simulation variance caused by the differing diffusion coefficients.

\subsection{Transport increments}
\label{sec:correlation_transport}

While correlating Brownian increments \eqref{eq:brownian_increments} is straightforward, correlating the transport increments \eqref{eq:transport_increments} is more involved. Since we simulate level $\ell$ first, we have the increments $\Delta \U^{n,m}_{p,\ell}$, spanning a total time interval $\Delta t_\ell$, at our disposal. Our goal is to use the random numbers in these increments to calculate a single increment $\Delta \U^{n}_{p,\ell-1}$ that spans the same time interval. Note that, in the collision phase of the asymptotic-preserving particle scheme (Section~\ref{sec:ap_scheme}), both the value of the velocity and the probability of collision depend on the value of the time step $\Delta t$, and therefore depend on the level $\ell$.  The coupling is done in two steps. First, the occurrence of a collision in each simulation step of the coarse simulation is coupled to the occurrence of a collision in at least one of the $M$ sub-steps of the coupled fine simulation. Then, if a collision occurs at both level $\ell$ and $\ell-1$, we will correlate the new velocities generated at both levels.

\subsubsection{Deciding upon collision in the coarse simulation}

Let us first consider the simulation at level $\ell$. In each of the $M$ fine simulation time steps of size $\Delta t_\ell$, we need to decide whether or not the particle collided. To this end, we draw a random number $\PT^{n,m}_{p,\ell} \sim \mathcal{U}([0,1])$ in the evaluation of \eqref{eq:collision}. If this number is larger than the probability that no collision has occurred in the the time step $p_{nc,\Delta t_\ell}=1-p_{c,\Delta t_\ell}$
\begin{equation}
\label{eq:collisioncondition}
\PT_{p,\ell}^{n,m} \geq p_{nc,\Delta t_\ell} = \frac{\epsilon^2}{\epsilon^2+\Delta t_\ell},
\end{equation}
then a collision is performed, i.e., a new velocity is randomly drawn from $\mathcal{M}_{\Delta t_\ell}$ at the end of that time step. At least one collision takes place in the interval spanning $\Delta t_{\ell-1}$ if at least one of the generated $\PT_{p,\ell}^{n,m}$, $m \in \{0,\dots,M-1\}$, satisfies \eqref{eq:collisioncondition}, i.e.,

 \begin{equation}
\label{eq:maxalpha}
\PT^{n,\text{max}}_{p,\ell} = \max_m \PT_{p,\ell}^{n,m} \geq p_{nc,\Delta t_\ell}.
\end{equation}
When~\eqref{eq:maxalpha} is satisfied, we want the probability of a collision taking place in the correlated coarse simulation to be large. 

We thus wish to generate a $\PT_{p,\ell-1}^n \sim \mathcal{U}\left([0,1]\right)$ based on the value of $\PT^{n,\text{max}}_{p,\ell}$, which we can use to test for a collision at level $\ell-1$. 
Since the cumulative density function of $\PT^{n,\text{max}}_{p,\ell}$ is given by
\begin{equation}
\label{eq:CDF_maxalpha}
\text{CDF}\left(\PT^{n,\text{max}}_{p,\ell}\right) = {\left(\PT^{n,\text{max}}_{p,\ell}\right)}^M,
\end{equation}
we get, by the inverse transform method, that ${\left(\PT^{n,\text{max}}_{p,\ell}\right)}^M \sim \mathcal{U}([0,1])$. This means that we can achieve our goal by setting
\begin{equation}
\label{eq:alphacorr}
\PT_{p,\ell-1}^n = {\left(\PT^{n,\text{max}}_{p,\ell}\right)}^M,
\end{equation}
giving line 6 of Algorithm~\ref{alg:correlation}.

A collision at level $\ell-1$ then occurs when 
\begin{equation}
\label{eq:collisioncondition_level_coarse}
\PT_{p,\ell-1}^n \geq p_{nc,\Delta t_{\ell-1}} = \frac{\epsilon^2}{\epsilon^2+\Delta t_{\ell-1}},
\end{equation}
giving line 7 of Algorithm~\ref{alg:correlation}.

\subsubsection{Choosing a new velocity}
We first show that a collision at level $\ell-1$ can only occur when at least one collision has taken place in the simulation at level $\ell$.
\begin{lemma}
\label{lem:collision_likelihood}
Given a simulation interval $\Delta t_{\ell-1}$ and a pair of paths $X_{p,\ell-1}$ and $X_{p,\ell}$ spanning this interval with time step sizes, respectively, $\Delta t_{\ell-1}$ and $\Delta t_\ell$, with collision behavior correlated as in \eqref{eq:alphacorr}. Then, the absence of collision in the $M$ sub-steps of the fine simulation $X_{p,\ell}$ guarantees the absence of collisions in the coarse simulation $X_{p,\ell-1}$.
\end{lemma}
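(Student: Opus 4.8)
The plan is to unwind the two collision tests — the fine test \eqref{eq:collisioncondition} applied in each of the $M$ sub-steps and the coarse test \eqref{eq:collisioncondition_level_coarse} — through the coupling rule \eqref{eq:alphacorr}, and thereby reduce the statement to a single elementary inequality between the no-collision probabilities $\pncf$ and $\pncr$.

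First I would translate the hypothesis. By the collision rule \eqref{eq:collision} together with \eqref{eq:collisioncondition}, ``no collision in any of the $M$ sub-steps of $X_{p,\ell}$'' means $\PT^{n,m}_{p,\ell} < \pncf$ for every sub-step $m$, hence $\PT^{n,\text{max}}_{p,\ell} = \max_m \PT^{n,m}_{p,\ell} < \pncf$ by \eqref{eq:maxalpha}. Since $x\mapsto x^M$ is strictly increasing on $[0,\infty)$, the coupling \eqref{eq:alphacorr} then gives $\PT^n_{p,\ell-1} = \bigl(\PT^{n,\text{max}}_{p,\ell}\bigr)^M < \bigl(\pncf\bigr)^M$. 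By the coarse collision criterion \eqref{eq:collisioncondition_level_coarse}, it therefore suffices to show $\bigl(\pncf\bigr)^M \le \pncr$; this yields $\PT^n_{p,\ell-1} < \pncr$, so no collision is performed at level $\ell-1$.

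The remaining step is to prove $\bigl(\pncf\bigr)^M \le \pncr$. Writing out the definitions and using $\Delta t_{\ell-1} = M\,\Delta t_\ell$, this reads $\left(\frac{\epsilon^2}{\epsilon^2+\Delta t_\ell}\right)^{M} \le \frac{\epsilon^2}{\epsilon^2+M\Delta t_\ell}$; dividing by $\epsilon^{2M}$ and inverting, it is equivalent to $\left(1+\frac{\Delta t_\ell}{\epsilon^2}\right)^{M}\ge 1+M\frac{\Delta t_\ell}{\epsilon^2}$, which is exactly Bernoulli's inequality for $x=\Delta t_\ell/\epsilon^2\ge 0$ and integer $M\ge1$. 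This closes the argument.

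I do not expect a genuine obstacle: the only non-routine observation is that the whole claim collapses onto Bernoulli's inequality, and the reason is structural — raising the maximum to the $M$-th power in \eqref{eq:alphacorr} is precisely the monotone (CDF-based) transformation that makes the coarse collision test consistent with, and implied by, the fine test. I would phrase the write-up so that this point is visible, since it is what guarantees there are no spurious coarse-level collisions.
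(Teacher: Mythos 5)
Your proposal is correct and takes essentially the same route as the paper: both reduce the claim to the inequality $\pncf^M \le \pncr$, i.e.\ $\left(\tfrac{\epsilon^2}{\epsilon^2+\Delta t_\ell}\right)^M \le \tfrac{\epsilon^2}{\epsilon^2+M\Delta t_\ell}$, which the paper verifies by expanding $(\epsilon^2+\Delta t_\ell)^M$ with the binomial theorem --- precisely the integer-exponent form of the Bernoulli inequality you invoke. The only cosmetic difference is that the paper phrases the reduction as a contrapositive (a coarse collision forces a fine one) while you argue the implication directly.
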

\begin{proof}
By the definition of $\pncr$ and the fact that $\Delta t_{\ell-1} = M\Delta t_\ell$, checking this statement corresponds to verifying that 
\begin{equation}
\PT_{p,\ell-1}^n > \pncr = \frac{\epsilon^2}{\epsilon^2 +M\Delta t_\ell} 
\quad \text{implies} \quad
 \PT^{n,\text{max}}_{p,\ell} > \pncf =  \frac{\epsilon^2}{\epsilon^2 +\Delta t_\ell}.
\end{equation}
Using \eqref{eq:collisioncondition_level_coarse}, this is equivalent to showing that
\begin{equation}
\label{eq:to_check_collisions}
 \left(\frac{\epsilon^2}{\epsilon^2 +\Delta t_\ell}\right)^M \leq \frac{\epsilon^2}{\epsilon^2 +M\Delta t_\ell}.
 \end{equation}
We do this by rewriting \eqref{eq:to_check_collisions} as 
 \begin{align}
   \epsilon^{2M-2}(\epsilon^2 + M\Delta t_\ell) &\leq (\epsilon^2+\Delta t_\ell)^M
 \end{align}
and using the binomial theorem to obtain
\begin{align} 
\label{eq:collision_proof_inequality}
\epsilon^{2M} + M\epsilon^{2(M-1)}\Delta t_\ell &\leq \epsilon^{2M} + M\epsilon^{2(M-1)}\Delta t_\ell + \dotsb + \Delta t_\ell^M.
\end{align}
Since $\epsilon$, $M$ and $\Delta t_\ell$ are all positive, the statement is proved.
\end{proof}
\begin{remark}
\label{rem:collision_probabilities}
Note that the inequality in \eqref{eq:collision_proof_inequality} can be made strict, so it is possible that a collision is performed in the simulation at level $\ell$ without a collision occurring at level $\ell-1$. This is expected behavior, given the $\Delta t$ dependence of the collision rate in \eqref{eq:GTmod}.
\end{remark}

Lemma~\ref{lem:collision_likelihood} implies that we never select a new velocity for the simulation at level $\ell-1$ without already selecting a new velocity for in least one of the sub-steps at level $\ell$. If a collision is performed in the simulation at level $\ell-1$, we want to correlate it with the fine simulation velocity at the end of the  time interval. We consider the transformation of $\mathcal{M}(v)$ given in \eqref{eq:velocity_decomposition}. If we use the same $\VDR$ to sample from $\mathcal{M}_{\Delta t_\ell}(v)$ and $\mathcal{M}_{\Delta t_{\ell-1}}(v)$, we can expect the resulting velocities to be correlated. In each of the $M$ fine sub-steps containing a collision we draw a $\VDR^{n,m,*}_{p,\ell}$ for use in the subsequent time step. Given that the $\VDR^{n,m,*}_{p,\ell}$ are i.i.d., we can select one freely to use as $\VDR^{n,*}_{p,{\ell-1}}$. To maximise the correlation of the velocities at the end of the time interval, we choose to take the last generated $\VDR^{n,m,*}_{p,\ell}$, i.e., we choose
\begin{equation}
\label{eq:betacorr}
V^{n+1}_{p,{\ell-1}} = \CV_{\Delta t_{\ell-1}} \VDR^{n+1}_{p,{\ell-1}}, \quad \VDR^{n+1}_{p,{\ell-1}} = \VDR^{n,*}_{p,{\ell-1}} = \VDR^{n,M-1,*}_{p,\ell},
\end{equation}
giving line 8 of Algorithm~\ref{alg:correlation}.

\subsubsection{Numerical illustration}

In Figure \ref{fig:corrpathtransp}, we show two particle trajectories, containing a series of increments $\Delta \U^{n,m}_{p,\ell}$ and $\Delta \U^{n}_{p,\ell-1}$ for the two speed model \eqref{eq:GTmod}, using coupled uniformly distributed numbers as described in \eqref{eq:alphacorr} and \eqref{eq:betacorr}, with $\epsilon=0.5$, $\Delta t_\ell=0.2$ and $M=5$. In this model, we sample from $\mathcal{M}_{\Delta t_\ell}(v)$ by drawing $\VDR^{n,m}_{p,\ell}$ from $\{-1,1\}$ and multiplying it with $\CV_{\Delta t_\ell}$. 
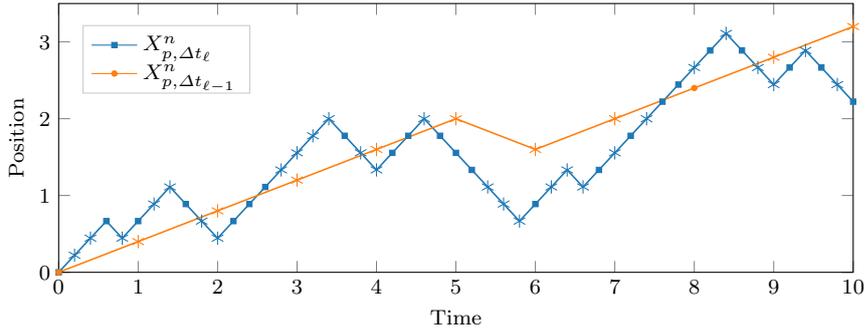
\begin{figure}
\centering
\begin{minipage}{0.05\textwidth}\end{minipage}
\begin{minipage}{0.95\textwidth}
%
%
\definecolor{mycolor1}{rgb}{0.12156862745098,0.466666666666667,0.705882352941177}
\definecolor{mycolor2}{rgb}{1,0.498039215686275,0.0549019607843137}
\begin{tikzpicture}

\begin{axis}[%
width=\figurewidth,
height=\figureheight,
at={(0\figurewidth,0\figureheight)},
xlabel={Time},
ylabel={Position},
scale only axis,
xmin=0,
xmax=10,
ymin=0,
ymax=3.5,
axis background/.style={fill=white},
legend entries={{$X^{n}_{p,\Delta t_{\ell}}$},{$X^{n}_{p,\Delta t_{\ell-1}}$}},
legend cell align={left},
legend style={at={(0.03,0.92)}, anchor=north west, draw=white!80.0!black}
]
\addlegendimage{mycolor1, mark=square*, mark size=1}
\addlegendimage{mycolor2, mark=*, mark size=1}
\addplot [semithick, color=mycolor1, forget plot]
  table[row sep=crcr]{%
0	0\\
0.2	0.222222222222222\\
0.4	0.444444444444444\\
0.6	0.666666666666667\\
0.8	0.444444444444445\\
1	0.666666666666667\\
1.2	0.888888888888889\\
1.4	1.11111111111111\\
1.6	0.888888888888889\\
1.8	0.666666666666667\\
2	0.444444444444445\\
2.2	0.666666666666667\\
2.4	0.888888888888889\\
2.6	1.11111111111111\\
2.8	1.33333333333333\\
3	1.55555555555556\\
3.2	1.77777777777778\\
3.4	2\\
3.6	1.77777777777778\\
3.8	1.55555555555556\\
4	1.33333333333333\\
4.2	1.55555555555556\\
4.4	1.77777777777778\\
4.6	2\\
4.8	1.77777777777778\\
5	1.55555555555556\\
5.2	1.33333333333333\\
5.4	1.11111111111111\\
5.6	0.888888888888889\\
5.8	0.666666666666667\\
6	0.888888888888889\\
6.2	1.11111111111111\\
6.4	1.33333333333333\\
6.6	1.11111111111111\\
6.8	1.33333333333333\\
7	1.55555555555556\\
7.2	1.77777777777778\\
7.4	2\\
7.6	2.22222222222222\\
7.8	2.44444444444444\\
8	2.66666666666667\\
8.2	2.88888888888889\\
8.4	3.11111111111111\\
8.6	2.88888888888889\\
8.8	2.66666666666667\\
9	2.44444444444444\\
9.2	2.66666666666667\\
9.4	2.88888888888889\\
9.6	2.66666666666667\\
9.8	2.44444444444444\\
10	2.22222222222222\\
};
\addplot [semithick, color=mycolor2, forget plot]
  table[row sep=crcr]{%
0	0\\
1	0.4\\
2	0.8\\
3	1.2\\
4	1.6\\
5	2\\
6	1.6\\
7	2\\
8	2.4\\
9	2.8\\
10	3.2\\
};
\addplot[only marks, mark=asterisk, mark options={}, mark size=2.5pt, draw=mycolor1] table[row sep=crcr]{%
x	y\\
0.2	0.222222222222222\\
0.4	0.444444444444444\\
0.8	0.444444444444445\\
1.2	0.888888888888889\\
1.4	1.11111111111111\\
1.8	0.666666666666667\\
2	0.444444444444445\\
2.8	1.33333333333333\\
3	1.55555555555556\\
3.2	1.77777777777778\\
3.4	2\\
3.8	1.55555555555556\\
4	1.33333333333333\\
4.6	2\\
5.4	1.11111111111111\\
5.6	0.888888888888889\\
5.8	0.666666666666667\\
6.2	1.11111111111111\\
6.4	1.33333333333333\\
6.6	1.11111111111111\\
7	1.55555555555556\\
7.4	2\\
8	2.66666666666667\\
8.4	3.11111111111111\\
8.8	2.66666666666667\\
9	2.44444444444444\\
9.4	2.88888888888889\\
9.8	2.44444444444444\\
};
\addplot[only marks, mark=asterisk, mark options={}, mark size=2.5pt, draw=mycolor2] table[row sep=crcr]{%
x	y\\
1	0.4\\
2	0.8\\
3	1.2\\
4	1.6\\
5	2\\
6	1.6\\
7	2\\
9	2.8\\
10	3.2\\
};
\addplot[only marks, mark=square*, mark options={}, mark size=1pt, draw=mycolor1, fill=mycolor1] table[row sep=crcr]{%
x	y\\
0	0\\
0.6	0.666666666666667\\
1	0.666666666666667\\
1.6	0.888888888888889\\
2.2	0.666666666666667\\
2.4	0.888888888888889\\
2.6	1.11111111111111\\
3.6	1.77777777777778\\
4.2	1.55555555555556\\
4.4	1.77777777777778\\
4.8	1.77777777777778\\
5	1.55555555555556\\
5.2	1.33333333333333\\
6	0.888888888888889\\
6.8	1.33333333333333\\
7.2	1.77777777777778\\
7.6	2.22222222222222\\
7.8	2.44444444444444\\
8.2	2.88888888888889\\
8.6	2.88888888888889\\
9.2	2.66666666666667\\
9.6	2.66666666666667\\
10	2.22222222222222\\
};
\addplot[only marks, mark=*, mark options={}, mark size=1pt, draw=mycolor2, fill=mycolor2] table[row sep=crcr]{%
0	0\\
8	2.4\\
};
\end{axis}
\end{tikzpicture}%
\end{minipage}
\caption{Correlated transport steps for the two speed model with $\epsilon=0.5$, $\Delta t_\ell=0.2$ and $\Delta t_{\ell-1}=1$. Stars mark collisions.\label{fig:corrpathtransp}}
\end{figure}
\begin{figure}
\centering
%
%
\definecolor{mycolor1}{rgb}{0.12156862745098,0.466666666666667,0.705882352941177}
\definecolor{mycolor2}{rgb}{1,0.498039215686275,0.0549019607843137}
\begin{tikzpicture}

\begin{axis}[%
width=\figurewidth,
height=\figureheight,
at={(0\figurewidth,0\figureheight)},
xlabel={Time},
ylabel={Position},
scale only axis,
xmin=0,
xmax=10,
ymin=-2,
ymax=3,
axis background/.style={fill=white},
legend entries={{$X^{n}_{p,\Delta t_{\ell}}$},{$X^{n}_{p,\Delta t_{\ell-1}}$}},
legend cell align={left},
legend style={at={(0.03,0.08)}, anchor=south west, draw=white!80.0!black}
]
\addlegendimage{mycolor1, mark=square*, mark size=1}
\addlegendimage{mycolor2, mark=*, mark size=1}
\addplot [semithick, color=mycolor1, forget plot]
  table[row sep=crcr]{%
0	0\\
0.2	0.133799314885418\\
0.4	0.619624937050425\\
0.6	0.919102556675746\\
0.8	0.262692214695232\\
1	0.885141502401421\\
1.2	1.23683239758445\\
1.4	1.51604937975543\\
1.6	1.51107408770838\\
1.8	1.39907044932444\\
2	0.779882970987714\\
2.2	0.9336576203951\\
2.4	1.09429780164257\\
2.6	1.09220433161973\\
2.8	2.02366370310759\\
3	1.87664601230106\\
3.2	1.89487389790775\\
3.4	1.81688864288628\\
3.6	1.09957503078938\\
3.8	0.796297511005021\\
4	0.458517133391721\\
4.2	1.32587457305407\\
4.4	1.44309874459892\\
4.6	1.21660919332362\\
4.8	1.67046393015192\\
5	1.96882814575483\\
5.2	1.64978691072399\\
5.4	0.792511997229651\\
5.6	0.382818226868795\\
5.8	0.0948454908364456\\
6	0.433468309337253\\
6.2	0.545574269867864\\
6.4	0.954759586650851\\
6.6	0.897774471580306\\
6.8	0.592664164855734\\
7	0.415190967424449\\
7.2	0.324935423689017\\
7.4	0.333042966619759\\
7.6	0.420098687119435\\
7.8	0.647578318800531\\
8	-0.407412770365323\\
8.2	-0.377884840036178\\
8.4	0.368199627444877\\
8.6	-0.303783395321323\\
8.8	-0.132311256278836\\
9	-0.206825174970892\\
9.2	0.00316714343903346\\
9.4	0.302317954108386\\
9.6	-0.579789824175304\\
9.8	-0.83765702085484\\
10	-0.383596081394384\\
};
\addplot [semithick, color=mycolor2, forget plot]
  table[row sep=crcr]{%
0	0\\
1	0.693114750445607\\
2	1.25003800857387\\
3	1.63078805292564\\
4	0.426310905467504\\
5	2.55446336295379\\
6	0.988989175462115\\
7	0.470040357083178\\
8	-1.72431035366269\\
9	-0.757051457415612\\
10	-0.296072118339949\\
};
\addplot[only marks, mark=asterisk, mark options={}, mark size=2.5pt, draw=mycolor1] table[row sep=crcr]{%
x	y\\
0.2	0.133799314885418\\
0.4	0.619624937050425\\
0.8	0.262692214695232\\
1.2	1.23683239758445\\
1.4	1.51604937975543\\
1.8	1.39907044932444\\
2	0.779882970987714\\
2.8	2.02366370310759\\
3	1.87664601230106\\
3.2	1.89487389790775\\
3.4	1.81688864288628\\
3.8	0.796297511005021\\
4	0.458517133391721\\
4.6	1.21660919332362\\
5.4	0.792511997229651\\
5.6	0.382818226868795\\
5.8	0.0948454908364456\\
6.2	0.545574269867864\\
6.4	0.954759586650851\\
6.6	0.897774471580306\\
7	0.415190967424449\\
7.4	0.333042966619759\\
8	-0.407412770365323\\
8.4	0.368199627444877\\
8.8	-0.132311256278836\\
9	-0.206825174970892\\
9.4	0.302317954108386\\
9.8	-0.83765702085484\\
};
\addplot[only marks, mark=asterisk, mark options={}, mark size=2.5pt, draw=mycolor2] table[row sep=crcr]{%
x	y\\
1	0.693114750445607\\
2	1.25003800857387\\
3	1.63078805292564\\
4	0.426310905467504\\
5	2.55446336295379\\
6	0.988989175462115\\
7	0.470040357083178\\
9	-0.757051457415612\\
10	-0.296072118339949\\
};
\addplot[only marks, mark=square*, mark options={}, mark size=1pt, draw=mycolor1, fill=mycolor1] table[row sep=crcr]{%
x	y\\
0.6	0.919102556675746\\
1	0.885141502401421\\
1.6	1.51107408770838\\
2.2	0.9336576203951\\
2.4	1.09429780164257\\
2.6	1.09220433161973\\
3.6	1.09957503078938\\
4.2	1.32587457305407\\
4.4	1.44309874459892\\
4.8	1.67046393015192\\
5	1.96882814575483\\
5.2	1.64978691072399\\
6	0.433468309337253\\
6.8	0.592664164855734\\
7.2	0.324935423689017\\
7.6	0.420098687119435\\
7.8	0.647578318800531\\
8.2	-0.377884840036178\\
8.6	-0.303783395321323\\
9.2	0.00316714343903346\\
9.6	-0.579789824175304\\
10	-0.383596081394384\\
};
\addplot[only marks, mark=*, mark options={}, mark size=1pt, draw=mycolor2, fill=mycolor2] table[row sep=crcr]{%
x	y\\
0	0\\
8	-1.72431035366269\\
};
\end{axis}
\end{tikzpicture}%
\caption{Correlated paths steps for the two speed model with $\epsilon=0.5$, $\Delta t_\ell=0.2$ and $\Delta t_{\ell-1}=1$. Stars mark collisions.\label{fig:corrpath}}
\end{figure}
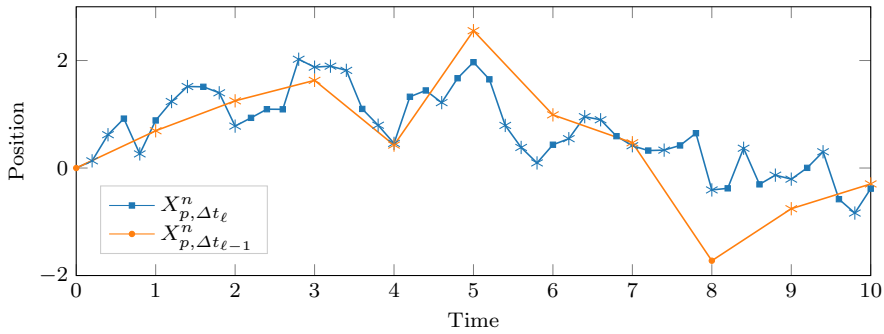
We observe that the paths have the same signs in the velocities at the end of a coarse time step whenever a collision has taken place in both simulations. Still, there is an observable difference between them. This is due to the paths having different characteristic velocities, which are a function of the time step size. There is also a probability of no collision taking place in the coarse simulation, while a collision takes place in the fine simulation, as per Remark~\ref{rem:collision_probabilities}. For instance, no collision occurs at $t=8$ in the coarse simulation, while a collision takes place at time $t=7.4$ and $t=8$ in the fine simulation. However, by coincidence, the new velocity generated at $t=8$ in the fine simulation has the same sign  as the velocity in the coarse simulation. This mismatch is also part of the bias we want to estimate. If we add Figure~\ref{fig:corrpathdiff} to Figure~\ref{fig:corrpathtransp}, we get the paths shown in Figure~\ref{fig:corrpath}.

\subsection{Variance of correlated paths\label{sec:correlation_variance}}

To conclude this section, we present a brief numerical result in which we compute the variance of 10 000 pairs trajectories as shown in Figures~\ref{fig:corrpathdiff}--\ref{fig:corrpath}. The results are shown in Figure~\ref{fig:weak_behavior}. We leave a detailed study of the variance structure resulting from our algorithm to the subsequent sections and limit ourselves to two observations here. We see that the variance of differences (green line with triangles) falls consistently below that of the fine simulation (blue line with squares), meaning that fewer samples are needed to estimate the difference than are needed to directly estimate the result of the fine simulation.

\begin{figure}
\subfloat[Brownian increments]{
%
%
\definecolor{color0}{rgb}{0.12156862745098,0.466666666666667,0.705882352941177}
\definecolor{color1}{rgb}{1,0.498039215686275,0.0549019607843137}
\definecolor{color2}{rgb}{0.01,0.8,0.4}
\begin{tikzpicture}

\begin{axis}[%
width=\figurewidth,
height=\figureheight,
at={(0.758in,0.481in)},
scale only axis,
xmin=0,
xmax=10,
ymin=0,
ymax=16,
xlabel=Time,
ylabel=Variance,
axis background/.style={fill=white},
legend pos=north west,
legend style={legend cell align=left, align=left, draw=white!15!black}
]
\addplot [thick, color=color2]
  table[row sep=crcr]{%
0	0\\
0.2	0.152447228334248\\
0.4	0.233811765328659\\
0.6	0.251930261294831\\
0.8	0.206775052085983\\
1	0.105567258471065\\
1.2	0.256143443206608\\
1.4	0.339632790108909\\
1.6	0.35926709280429\\
1.8	0.322192479159389\\
2	0.208805871406721\\
2.2	0.35568199997303\\
2.4	0.440746036420459\\
2.6	0.455315292794048\\
2.8	0.408173211160524\\
3	0.308784290200583\\
3.2	0.450508505010197\\
3.4	0.528085972658906\\
3.6	0.545177732013622\\
3.8	0.508879567966469\\
4	0.404479235281613\\
4.2	0.541598113944266\\
4.4	0.615824487939951\\
4.6	0.648579927372538\\
4.8	0.604572597496466\\
5	0.500149577822666\\
5.2	0.654824939904366\\
5.4	0.730749793288704\\
5.6	0.759317584369576\\
5.8	0.714024457938759\\
6	0.608863120244818\\
6.2	0.754747223198254\\
6.4	0.847228519544404\\
6.6	0.862598588438924\\
6.8	0.82868604756916\\
7	0.717588188350507\\
7.2	0.857465160275337\\
7.4	0.946030632187863\\
7.6	0.969828985626744\\
7.8	0.934160972587029\\
8	0.821609835948781\\
8.2	0.980923273384621\\
8.4	1.05864966531562\\
8.6	1.06746895443869\\
8.8	1.01980892993257\\
9	0.921496330251598\\
9.2	1.0666378511141\\
9.4	1.15300114037085\\
9.6	1.17906367115872\\
9.8	1.13941492768793\\
10	1.02518478466648\\
};
\addlegendentry{$W_{p,\Delta t_\ell}^{n,m} - W_{p,\Delta t_{\ell-1}}^n$}

\addplot [thick, mark=square*, mark size=1pt, color=color0]
  table[row sep=crcr]{%
0	0\\
0.2	0.183261373820504\\
0.4	0.356819381730182\\
0.6	0.538951197466752\\
0.8	0.722780857057583\\
1	0.904460942322639\\
1.2	1.06893690368744\\
1.4	1.25332017666848\\
1.6	1.43706623339755\\
1.8	1.59588506917166\\
2	1.78897091721661\\
2.2	1.96552226430154\\
2.4	2.11011881982791\\
2.6	2.2984596229575\\
2.8	2.47128199644557\\
3	2.64554876326353\\
3.2	2.8191170755066\\
3.4	2.96914542207603\\
3.6	3.13270747256859\\
3.8	3.29468131013462\\
4	3.46542740231359\\
4.2	3.64529298041566\\
4.4	3.81827499797727\\
4.6	3.95790565047144\\
4.8	4.12208278087362\\
5	4.28509525596661\\
5.2	4.46299184840521\\
5.4	4.66741187310521\\
5.6	4.84639381558736\\
5.8	5.03076694471673\\
6	5.21651238705866\\
6.2	5.39789709758873\\
6.4	5.54697775760128\\
6.6	5.77999744274393\\
6.8	5.93990255415249\\
7	6.14802826591344\\
7.2	6.35201964551421\\
7.4	6.51736032665481\\
7.6	6.68983305542967\\
7.8	6.844890264348\\
8	7.0392469900833\\
8.2	7.18905047018415\\
8.4	7.361512560167\\
8.6	7.55770386058385\\
8.8	7.7529751176222\\
9	7.89503726133674\\
9.2	8.06217697370002\\
9.4	8.24886409142626\\
9.6	8.40706572982076\\
9.8	8.58393267807413\\
10	8.78340131043978\\
};
\addlegendentry{$W_{p,\Delta t_\ell}^{n,m}$}

\addplot [thick, mark=*, mark size=1pt, color=color1]
  table[row sep=crcr]{%
0	0\\
1	1.62802969618075\\
2	3.22014765098989\\
3	4.76198777387436\\
4	6.23776932416446\\
5	7.71317146073993\\
6	9.38972229670558\\
7	11.0664508786442\\
8	12.6706445821499\\
9	14.2110670704061\\
10	15.8101223587916\\
};
\addlegendentry{$W_{p,\Delta t_{\ell-1}}^n$}

\end{axis}
\end{tikzpicture}%
}\\
\subfloat[Transport increments]{
%
%
\definecolor{color0}{rgb}{0.12156862745098,0.466666666666667,0.705882352941177}
\definecolor{color1}{rgb}{1,0.498039215686275,0.0549019607843137}
\definecolor{color2}{rgb}{0.01,0.8,0.4}
\begin{tikzpicture}

\begin{axis}[%
width=\figurewidth,
height=\figureheight,
at={(0.758in,0.481in)},
scale only axis,
xmin=0,
xmax=10,
ymin=0,
ymax=10,
xlabel=Time,
ylabel=Variance,
axis background/.style={fill=white},
legend pos=north west,
legend style={legend cell align=left, align=left, draw=white!15!black}
]
\addplot [thick, color=color2]
  table[row sep=crcr]{%
0	0\\
0.2	3.02885853828187e-28\\
0.4	0.0341750224405229\\
0.6	0.116612895857489\\
0.8	0.234098400457312\\
1	0.37648406618442\\
1.2	0.475955621952355\\
1.4	0.563689737151522\\
1.6	0.655543870317907\\
1.8	0.755221784222828\\
2	0.870466811619455\\
2.2	0.968496376635062\\
2.4	1.05043891263121\\
2.6	1.14908205084526\\
2.8	1.26053334624998\\
3	1.38765530154992\\
3.2	1.49654360983752\\
3.4	1.58247990875141\\
3.6	1.67127169281866\\
3.8	1.77263521939353\\
4	1.89370391095409\\
4.2	1.99627834902326\\
4.4	2.08735347456796\\
4.6	2.17872517919721\\
4.8	2.28175122896902\\
5	2.4000376307063\\
5.2	2.51506461316829\\
5.4	2.61052439872653\\
5.6	2.71024531763125\\
5.8	2.81378990596817\\
6	2.93091919989031\\
6.2	3.04285584575128\\
6.4	3.12393544054768\\
6.6	3.20403976949522\\
6.8	3.312846733401\\
7	3.41479291435304\\
7.2	3.50994153155877\\
7.4	3.59225715283904\\
7.6	3.68453949508407\\
7.8	3.7772297193879\\
8	3.89020983172884\\
8.2	3.98790018466858\\
8.4	4.07661924568434\\
8.6	4.17229372594622\\
8.8	4.28879035336146\\
9	4.41771263546107\\
9.2	4.5317773959685\\
9.4	4.61089565840254\\
9.6	4.69106429723594\\
9.8	4.79531705896402\\
10	4.90156324187488\\
};
\addlegendentry{$T_{p,\Delta t_\ell}^{n,m} - X_{p,\Delta t_{\ell-1}}^n$}

\addplot [thick, mark=square*, mark size=1pt, color=color0]  table[row sep=crcr]{%
0	0\\
0.2	1.41665235412454e-27\\
0.4	0.0341750224405147\\
0.6	0.116612895857478\\
0.8	0.234098400457296\\
1	0.376484066184426\\
1.2	0.533022607939814\\
1.4	0.699668189041153\\
1.6	0.870455243055156\\
1.8	1.03756492982627\\
2	1.20562480741909\\
2.2	1.37288779544625\\
2.4	1.54130787696064\\
2.6	1.72710302338871\\
2.8	1.9134942630066\\
3	2.09903847471158\\
3.2	2.28276243723133\\
3.4	2.46055143785987\\
3.6	2.63500228738915\\
3.8	2.81156949077625\\
4	2.99075010908513\\
4.2	3.16042349914006\\
4.4	3.33878471501474\\
4.6	3.51996919000531\\
4.8	3.69772700726831\\
5	3.87693090296679\\
5.2	4.07039933622997\\
5.4	4.2581430982605\\
5.6	4.44962129003008\\
5.8	4.63080663424363\\
6	4.81411789771595\\
6.2	5.00143478841692\\
6.4	5.17175471423713\\
6.6	5.33964144858911\\
6.8	5.52771587084661\\
7	5.69029669584224\\
7.2	5.85043509980624\\
7.4	6.02287270702388\\
7.6	6.20042799341667\\
7.8	6.36074229645149\\
8	6.52143229335234\\
8.2	6.68285613746528\\
8.4	6.85542118458772\\
8.6	7.0309153878347\\
8.8	7.21413359903898\\
9	7.39092389436506\\
9.2	7.57129120516999\\
9.4	7.73048002874346\\
9.6	7.89455648577182\\
9.8	8.07042700121878\\
10	8.2387129596914\\
};
\addlegendentry{$T_{p,\Delta t_\ell}^{n,m}$}

\addplot [thick, mark=*, mark size=1pt, color=color1]  table[row sep=crcr]{%
0	0\\
1	4.03325748846496e-27\\
2	0.153868826882686\\
3	0.37387853345335\\
4	0.613982063806294\\
5	0.852154565856553\\
6	1.08865199479955\\
7	1.32397943794371\\
8	1.56807482988307\\
9	1.81115567556763\\
10	2.04600891449119\\
};
\addlegendentry{$T_{p,\Delta t_{\ell-1}}^n$}

\end{axis}
\end{tikzpicture}%
}\\
\subfloat[Sum of Brownian and transport increments]{
%
%
\definecolor{color0}{rgb}{0.12156862745098,0.466666666666667,0.705882352941177}
\definecolor{color1}{rgb}{1,0.498039215686275,0.0549019607843137}
\definecolor{color2}{rgb}{0.01,0.8,0.4}
\begin{tikzpicture}

\begin{axis}[%
width=\figurewidth,
height=\figureheight,
at={(0.758in,0.481in)},
scale only axis,
xmin=0,
xmax=10,
ymin=0,
ymax=18,
xlabel=Time,
ylabel=Variance,
axis background/.style={fill=white},
legend pos=north west,
legend style={legend cell align=left, align=left, draw=white!15!black}
]
\addplot [thick, color=color2]
  table[row sep=crcr]{%
0	0\\
0.2	0.152447228334248\\
0.4	0.26802627590943\\
0.6	0.373239538977247\\
0.8	0.440106386320373\\
1	0.480742193821827\\
1.2	0.725597902167047\\
1.4	0.904901614057616\\
1.6	1.00625950086439\\
1.8	1.07068333962252\\
2	1.06903699995239\\
2.2	1.31933571049813\\
2.4	1.48845766406291\\
2.6	1.59129826039059\\
2.8	1.64841136023838\\
3	1.68055717707864\\
3.2	1.93660976349563\\
3.4	2.10232631413347\\
3.6	2.19602852364108\\
3.8	2.26802531528896\\
4	2.29652183174515\\
4.2	2.52932486520716\\
4.4	2.71183117449191\\
4.6	2.83279742025656\\
4.8	2.89475560126787\\
5	2.88534139863846\\
5.2	3.1660969726512\\
5.4	3.36104097319934\\
5.6	3.46935895732345\\
5.8	3.52046368413509\\
6	3.53663626511506\\
6.2	3.77453594228998\\
6.4	3.95909556940934\\
6.6	4.06005127842469\\
6.8	4.13279808359038\\
7	4.11871690245999\\
7.2	4.31963091473123\\
7.4	4.49529861190758\\
7.6	4.6050586627492\\
7.8	4.70992643210039\\
8	4.7061118089094\\
8.2	4.94101445886504\\
8.4	5.10331375656229\\
8.6	5.20962344938997\\
8.8	5.28359550004692\\
9	5.31441086928399\\
9.2	5.58531139244089\\
9.4	5.71604368554934\\
9.6	5.80288934145808\\
9.8	5.85961217155657\\
10	5.89581350686457\\
};
\addlegendentry{$X_{p,\Delta t_\ell}^{n,m} - X_{p,\Delta t_{\ell-1}}^n$}

\addplot [thick, mark=square*, mark size=1pt, color=color0]
  table[row sep=crcr]{%
0	0\\
0.2	0.183261373820504\\
0.4	0.39271344127915\\
0.6	0.663815765817435\\
0.8	0.959684866651143\\
1	1.28477690120066\\
1.2	1.59858132540966\\
1.4	1.95351075170576\\
1.6	2.30281944309284\\
1.8	2.64541014957943\\
2	3.02632209050028\\
2.2	3.38632026563206\\
2.4	3.6983543505312\\
2.6	4.06918794249865\\
2.8	4.42341604933522\\
3	4.79439310943461\\
3.2	5.15843819740803\\
3.4	5.46668252005563\\
3.6	5.77634666758591\\
3.8	6.10484756885381\\
4	6.45415068071879\\
4.2	6.80694241976306\\
4.4	7.17416911572521\\
4.6	7.51102658911414\\
4.8	7.87284168042072\\
5	8.20793353109405\\
5.2	8.57010653178433\\
5.4	8.98250293228844\\
5.6	9.31410704642815\\
5.8	9.6502475285662\\
6	10.0063732998532\\
6.2	10.3482721555359\\
6.4	10.6756546374756\\
6.6	11.074095641058\\
6.8	11.4163025746782\\
7	11.7869689697317\\
7.2	12.109016656704\\
7.4	12.4509596624048\\
7.6	12.7735571975838\\
7.8	13.1345113198703\\
8	13.4878727412773\\
8.2	13.7996311586797\\
8.4	14.1423265233987\\
8.6	14.5289434937871\\
8.8	14.9281655796084\\
9	15.2777243122409\\
9.2	15.6272367504533\\
9.4	15.9415771579267\\
9.6	16.2352347175756\\
9.8	16.5747423109727\\
10	17.0165869589853\\
};
\addlegendentry{$X_{p,\Delta t_\ell}^{n,m}$}

\addplot [thick, mark=*, mark size=1pt, color=color1]
  table[row sep=crcr]{%
0	0\\
1	1.62802969618075\\
2	3.37638581504623\\
3	5.14031684373332\\
4	6.84250804256023\\
5	8.56861697716249\\
6	10.4334754127175\\
7	12.2678690120566\\
8	14.1186241331612\\
9	15.9137884718751\\
10	17.7272341713015\\
};
\addlegendentry{$X_{p,\Delta t_{\ell-1}}^n$}

\end{axis}
\end{tikzpicture}%
}
\caption{Variance as a function of time for (differences of) coarse and fine simulations with $\epsilon=0.5$, $\Delta t_\ell=0.2$ and $\Delta t_{\ell-1}=1$. (A linear interpolation of the coarse simulation is used when computing the difference.) \label{fig:weak_behavior}}
\end{figure}
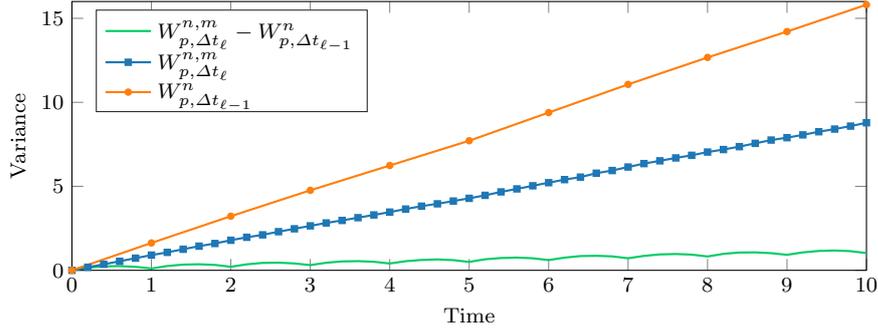
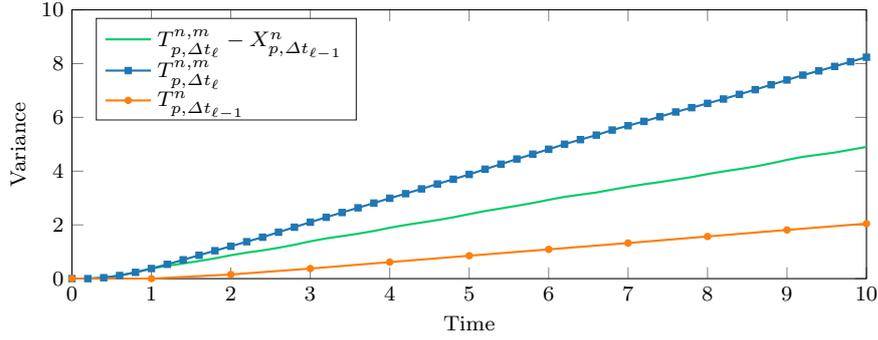
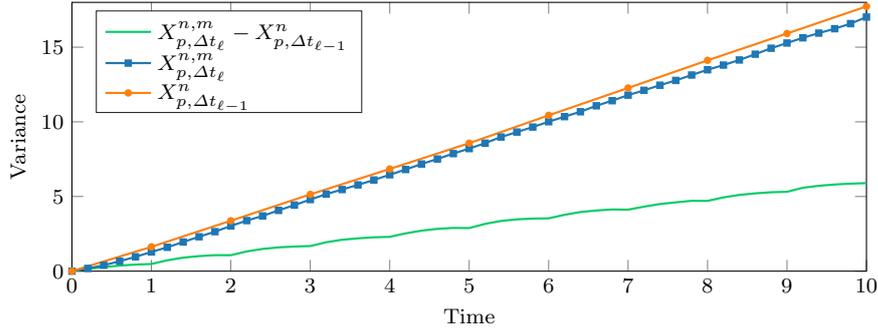

We also see that the variance of differences of transport processes lies above that of the coarse transport process. This is due to performing simulations with time step sizes with order of magnitude as $\epsilon^2$. In Section~\ref{sec:experiments}, we will show that this is the parameter regime in which the asymptotic-preserving model switches between simulating an approximation of the original kinetic equation~\eqref{eq:kineticdimless} and the limiting diffusion equation. As the simulation with time step $\Delta t_{\ell-1}$ contains much less transport behavior than the simulation with time step $\Delta t_\ell$, the variance of their difference lies between their individual variances, even though the correlation is good.

\section{Analysis of the multilevel scheme}
\label{sec:analysis}
As is clear from Theorem~\ref{thm:giles}, the behavior of the mean and variance of the estimators~\eqref{eq:MClestimator} as a function of the level $\ell$ is of key importance in the analysis of a multilevel Monte Carlo method. In this section, we  present and prove lemmas concerning the mean and variance of the difference in particle position and velocity for the coupled simulations~\eqref{eq:coupled_simulation} as a function of the time step $\Delta t_\ell$ and the refinement factor $M$ at some fixed time $t^*$. When considering the particle position, we look separately at the Brownian increments (Section~\ref{sec:brownian_analysis}), the transport increments (Section~\ref{sec:transport_analysis}) and particle velocities (Section~\ref{sec:velocity_exp_var}).
In Section~\ref{sec:proof}, we make use of these lemmas to prove a bound on the computational complexity of our multilevel Monte Carlo scheme.

For convenience, we introduce an additional notation for the difference of an arbitrary pair of coupled increments at arbitrary levels $\ell$ and $\ell-1$ at time $t_n=n\Delta t_{\ell-1}$:
\begin{equation}
\Delta_{W,n} = \sum_{m=0}^{M-1} \Delta W^{n,m}_{p,\ell} - \Delta W^{n}_{p,\ell-1},\qquad
\Delta_{\U,n} = \sum_{m=0}^{M-1} \Delta \U^{n,m}_{p,\ell} - \Delta \U^{n}_{p,\ell-1}.
\end{equation}

\subsection{Brownian increments}
\label{sec:brownian_analysis}

We first show that the Brownian increments have expected value 0.

\begin{lemma}
\label{lem:brownian_expectation}
Given a simulation interval $\Delta t_{\ell-1}$, an independent fine simulation containing Brownian increments $\Delta W^{n,m}_{p,\ell}$ with time step sizes $\Delta t_{\ell}$ and a coarse simulation containing Brownian increments $\Delta W^{n}_{p,\ell-1}$ with time step size $\Delta t_{\ell-1}$, generated by Algorithm~\ref{alg:correlation}. Then, $$\E\left[\Delta_{W,n}\right] = \E\left[\sum_{m=0}^{M-1} \Delta W^{n,m}_{p,\ell} - \Delta W^{n}_{p,\ell-1}\right] = 0.$$
\end{lemma}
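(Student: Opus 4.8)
The plan is to compute the expectation of $\Delta_{W,n}$ directly by exploiting the linearity of the expectation and the explicit construction in Algorithm~\ref{alg:correlation}. Recall from \eqref{eq:brownian_increments} that $\Delta W^{n,m}_{p,\ell} = \sqrt{2\Delta t_\ell}\sqrt{D_\ell}\,\xi^{n,m}_{p,\ell}$ with each $\xi^{n,m}_{p,\ell} \sim \mathcal{N}(0,1)$, so that $\E\bigl[\Delta W^{n,m}_{p,\ell}\bigr] = 0$ for every sub-step $m$, and hence $\E\bigl[\sum_{m=0}^{M-1}\Delta W^{n,m}_{p,\ell}\bigr] = 0$. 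For the coarse increment, the key point is that Algorithm~\ref{alg:correlation} sets $\xi^{n}_{p,\ell-1} = \tfrac{1}{\sqrt{M}}\sum_{m=0}^{M-1}\xi^{n,m}_{p,\ell}$, which was already shown in \eqref{eq:xicorr} to be distributed as $\mathcal{N}(0,1)$; in particular $\E\bigl[\xi^{n}_{p,\ell-1}\bigr] = 0$, so $\E\bigl[\Delta W^{n}_{p,\ell-1}\bigr] = \sqrt{2\Delta t_{\ell-1}}\sqrt{D_{\ell-1}}\,\E\bigl[\xi^{n}_{p,\ell-1}\bigr] = 0$.

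The proof then consists of combining these two facts:
\begin{equation}
\E\left[\Delta_{W,n}\right] = \E\left[\sum_{m=0}^{M-1}\Delta W^{n,m}_{p,\ell}\right] - \E\left[\Delta W^{n}_{p,\ell-1}\right] = 0 - 0 = 0.
\end{equation}
One can even avoid invoking \eqref{eq:xicorr} and argue purely from the definition: $\E\bigl[\xi^{n}_{p,\ell-1}\bigr] = \tfrac{1}{\sqrt{M}}\sum_{m=0}^{M-1}\E\bigl[\xi^{n,m}_{p,\ell}\bigr] = 0$ by linearity, regardless of any correlation structure among the $\xi^{n,m}_{p,\ell}$, since each has mean zero.

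I do not anticipate a genuine obstacle here — the statement is essentially an immediate consequence of linearity of expectation together with the fact that every Brownian increment involved (fine sub-steps and the correlated coarse step alike) has mean zero by construction. The only thing to be slightly careful about is making explicit that the correlation introduced in Algorithm~\ref{alg:correlation} between the coarse and fine randomness does not disturb the marginal mean of $\xi^{n}_{p,\ell-1}$, which is precisely what line 5 of the algorithm guarantees. The substantive content of the analysis (and the real work) will be in the companion lemmas on the \emph{variance} of $\Delta_{W,n}$ and on the transport and velocity increments, not in this expectation result.
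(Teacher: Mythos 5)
Your proof is correct and follows essentially the same route as the paper: both establish $\E\bigl[\Delta W^{n,m}_{p,\ell}\bigr]=0$ from the standard normal $\xi^{n,m}_{p,\ell}$, then use the explicit construction $\xi^{n}_{p,\ell-1}=\tfrac{1}{\sqrt{M}}\sum_m \xi^{n,m}_{p,\ell}$ to conclude $\E\bigl[\Delta W^{n}_{p,\ell-1}\bigr]=0$, and finish by linearity of expectation. Your added remark that the marginal mean of $\xi^{n}_{p,\ell-1}$ is unaffected by the correlation structure is exactly the point the paper makes when it says the coupling preserves the statistics at level $\ell-1$.
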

\begin{proof}
By the martingale property of the increments and the fact that the coupling preserves the statistics of the stochastic process at level $\ell-1$, we have
\begin{align}
\E\left[\Delta W^{n,m}_{p,\ell}\right] &= \sqrt{2\Delta t_{\ell}}\sqrt{D_{\Delta t_{\ell}}} \E\left[\xi^{n,m}_{p,\ell}\right] = 0, \label{eq:browinian_martingale_fine}\\
\E\left[\Delta W^{n}_{p,\ell-1}\right]&= \sqrt{2\Delta t_{\ell-1}}\sqrt{D_{\Delta t_{\ell-1}}}\frac{1}{\sqrt{M}} \E\left[\sum_{m=0}^{M-1}\xi^{n,m}_{p,\ell}\right] = 0.\label{eq:browinian_martingale_coarse}
\end{align}
The proof then directly follows from
\begin{align}
\E\left[\Delta_{W,n}\right]= \E\left[\sum_{m=0}^{M-1} \Delta W^{n,m}_{p,\ell} - \Delta W^{n}_{p,\ell-1}\right] = \sum_{m=0}^{M-1} \E\left[\Delta W^{n,m}_{p,\ell}\right] - \E\left[\Delta W^{n}_{p,\ell-1}\right].
\end{align}
\end{proof}

To prove convergence of the scheme in Section~\ref{sec:proof}, we require the following convergence result on the variance of the summed Brownian increments:
\begin{lemma}
\label{lem:brownian_variance}
Given a fixed ratio $M$ between two time step sizes $\Delta t_{\ell-1} = M \Delta t_\ell$, a fixed simulation horizon $t^*=NM\Delta t_\ell$, an independent fine simulation containing Brownian increments $\Delta W^{n,m}_{p,\ell}$ with time step sizes $\Delta t_{\ell}$ and a coarse simulation containing Brownian increments $\Delta W^{n}_{p,\ell-1}$, generated by Algorithm~\ref{alg:correlation} with time step size $\Delta t_{\ell-1}$. Then, the variance $$\V \left[ \sum_{n=0}^{N-1} \Delta_{W,n} \right] = \V\left[\sum_{m=0}^{M-1} \Delta W^{n,m}_{p,\ell} - \Delta W^{n}_{p,\ell-1}\right]$$ converges $\mathcal{O}(\Delta t_\ell)$ to 0, asymptotically for $\Delta t_\ell \to 0$.
\end{lemma}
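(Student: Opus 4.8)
The plan is to collapse the summed, coupled Brownian difference to a closed form involving only the diffusion coefficients $D_{\Delta t_\ell}$ and $D_{\Delta t_{\ell-1}}$, and then to read off the $\mathcal{O}(\Delta t_\ell)$ rate from the explicit formula for these coefficients. First I would substitute the coupling rule \eqref{eq:xicorr} together with $\Delta t_{\ell-1} = M\Delta t_\ell$ into the definition \eqref{eq:brownian_increments} of the coarse increment. Since $\sqrt{2\Delta t_{\ell-1}}/\sqrt{M} = \sqrt{2\Delta t_\ell}$, this gives $\Delta W^{n}_{p,\ell-1} = \sqrt{2\Delta t_\ell}\,\sqrt{D_{\Delta t_{\ell-1}}}\sum_{m=0}^{M-1}\xi^{n,m}_{p,\ell}$, while $\sum_{m=0}^{M-1}\Delta W^{n,m}_{p,\ell} = \sqrt{2\Delta t_\ell}\,\sqrt{D_{\Delta t_\ell}}\sum_{m=0}^{M-1}\xi^{n,m}_{p,\ell}$, so each difference factors as
\begin{equation}
\Delta_{W,n} = \sqrt{2\Delta t_\ell}\left(\sqrt{D_{\Delta t_\ell}} - \sqrt{D_{\Delta t_{\ell-1}}}\right)\sum_{m=0}^{M-1}\xi^{n,m}_{p,\ell}.
\end{equation}

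Next I would exploit independence. For distinct $n$ the quantities $\Delta_{W,n}$ are built from disjoint families of i.i.d.\ standard normals $\xi^{n,m}_{p,\ell}$, hence are mutually independent, so the variance of the sum is the sum of the variances. Using $\V\left[\sum_{m=0}^{M-1}\xi^{n,m}_{p,\ell}\right] = M$ I obtain $\V\left[\Delta_{W,n}\right] = 2M\Delta t_\ell\left(\sqrt{D_{\Delta t_\ell}} - \sqrt{D_{\Delta t_{\ell-1}}}\right)^2$, and inserting $N = t^*/(M\Delta t_\ell)$ the factors $N$, $M$ and $\Delta t_\ell$ cancel to leave
\begin{equation}
\V\left[\sum_{n=0}^{N-1}\Delta_{W,n}\right] = 2\,t^*\left(\sqrt{D_{\Delta t_\ell}} - \sqrt{D_{\Delta t_{\ell-1}}}\right)^2 .
\end{equation}

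It then remains to estimate the bracketed difference. Writing $D_{\Delta t} = \CV^2\Delta t/(\epsilon^2+\Delta t)$ I would factor out $\CV\sqrt{\Delta t_\ell}$,
\begin{equation}
\sqrt{D_{\Delta t_\ell}} - \sqrt{D_{\Delta t_{\ell-1}}} = \CV\sqrt{\Delta t_\ell}\left(\frac{1}{\sqrt{\epsilon^2+\Delta t_\ell}} - \frac{\sqrt{M}}{\sqrt{\epsilon^2+M\Delta t_\ell}}\right),
\end{equation}
and observe that, for $\epsilon$ fixed, the parenthesised factor stays bounded as $\Delta t_\ell \to 0$ (it converges to $(1-\sqrt{M})/\epsilon$). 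Hence $\sqrt{D_{\Delta t_\ell}} - \sqrt{D_{\Delta t_{\ell-1}}} = \mathcal{O}(\sqrt{\Delta t_\ell})$, its square is $\mathcal{O}(\Delta t_\ell)$, and therefore $\V\left[\sum_{n=0}^{N-1}\Delta_{W,n}\right] = \mathcal{O}(\Delta t_\ell)$.

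I do not expect a genuine obstacle: the argument is essentially the variance bookkeeping of the first step (correctly identifying independence across time steps and the cancellation of $N$, $M$, $\Delta t_\ell$) followed by an elementary estimate on the diffusion coefficients. The only point deserving a word of care is that the hidden constant in $\mathcal{O}(\Delta t_\ell)$ depends on $\epsilon$ (and on $t^*$, $M$, $\CV$), which is consistent with the statement, since the asymptotics are taken as $\Delta t_\ell \to 0$ at fixed $\epsilon$.
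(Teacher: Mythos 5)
Your proof is correct and reaches exactly the same intermediate identity and leading-order term as the paper, but by a cleaner route. The paper expands $\V\left[\Delta_{W,n}\right]$ into the two variances minus twice the covariance, computes each of the three pieces separately (in particular working out $\Cov(\Delta W^{n,m}_{p,\ell},\Delta W^{n}_{p,\ell-1})=2\Delta t_\ell\sqrt{D_{\Delta t_{\ell-1}}D_{\Delta t_\ell}}$), and only then recombines them into $2\Delta t_{\ell-1}\left(\sqrt{D_{\Delta t_\ell}}-\sqrt{D_{\Delta t_{\ell-1}}}\right)^2$; the final asymptotics are obtained from a Maclaurin expansion deferred to the supplementary materials. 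You instead observe at the outset that, under the coupling \eqref{eq:xicorr}, the coarse increment is a deterministic scalar multiple of the same Gaussian sum as the fine increments, so the difference factors as $\Delta_{W,n}=\sqrt{2\Delta t_\ell}\left(\sqrt{D_{\Delta t_\ell}}-\sqrt{D_{\Delta t_{\ell-1}}}\right)\sum_{m}\xi^{n,m}_{p,\ell}$. This makes the perfect-square structure of the per-step variance immediate, bypasses the covariance computation entirely, and lets you finish with an elementary boundedness estimate on $\frac{1}{\sqrt{\epsilon^2+\Delta t_\ell}}-\frac{\sqrt{M}}{\sqrt{\epsilon^2+M\Delta t_\ell}}$ rather than a symbolic series expansion; your limiting constant $2t^*\tilde{v}^2(\sqrt{M}-1)^2/\epsilon^2$ agrees with \eqref{eq:maclaurin_brownian}. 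The independence-across-$n$ step and the cancellation of $N$, $M$, $\Delta t_\ell$ are handled correctly, and your closing remark about the $\epsilon$-dependence of the hidden constant is apt. No gaps.
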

\begin{proof}
To calculate the variance of the difference $\Delta_{W,n}$ for each Brownian increment at level $\ell$, we write
\begin{align}
 \V\left[\Delta_{W,n}\right]&= \V\left[\sum_{m=0}^{M-1} \Delta W^{n,m}_{p,\ell} - \Delta W^{n}_{p,\ell-1}\right] \\
&= \V\left[\sum_{m=0}^{M-1} \Delta W^{n,m}_{p,\ell}\right] + \V\left[\Delta W^{n}_{p,\ell-1}\right] - 2\Cov\left(\sum_{m=0}^{M-1} \Delta W^{n,m}_{p,\ell},\Delta W^{n}_{p,\ell-1}\right) \\
&= \sum_{m=0}^{M-1} \V\left[\Delta W^{n,m}_{p,\ell}\right] + \V\left[\Delta W^{n}_{p,\ell-1}\right] - 2 \sum_{m=0}^{M-1} \Cov\left(\Delta W^{n,m}_{p,\ell},\Delta W^{n}_{p,\ell-1}\right), \label{eq:var_brown_new}
\end{align}
in which we used independence of the random variables at level $\ell$.

We thus need the variances of individual increments at levels $\ell$ and $\ell-1$, which follow trivially from \eqref{eq:brownian_increments}:
\begin{align}
\V\left[\Delta W^{n,m}_{p,\ell}\right] &= 2 \Delta t_\ell D_{\Delta t_\ell} \V\left[\xi^{n,m}_{p,\ell}\right] = 2 \Delta t_\ell D_{\Delta t_\ell},\label{eq:brownian_variance_fine}\\
\V\left[\Delta W^{n}_{p,\ell-1}\right] &= 2 \Delta t_{\ell-1} D_{\Delta t_{\ell-1}}  \frac{1}{M} \sum_{m=0}^{M-1}\V\left[\xi^{n,m}_{p,\ell}\right] = 2 \Delta t_{\ell-1} D_{\Delta t_{\ell-1}}.\label{eq:brownian_variance_coarse}
\end{align}
The covariance between the  increment $\Delta W^{n}_{p,\ell-1}$ at level $\ell-1$ and each sub-increment $\Delta W^{n,m}_{p,\ell}$ at level $\ell$ can be computed using \eqref{eq:browinian_martingale_fine} and \eqref{eq:browinian_martingale_coarse}:
\begin{align}
\Cov(\Delta W^{n,m}_{p,\ell}, \Delta W^{n}_{p,\ell-1}) &= \E\left[\Delta W^{n,m}_{p,\ell}\Delta W^{n}_{p,\ell-1}\right]\\	
&= \E \left[ 2\sqrt{\frac{\Delta t_{\ell-1} \Delta t_{\ell} D_{\Delta t_{\ell-1}} D_{\Delta t_{\ell}}}{M}} \xi^{n,m}_{p,\ell} \sum_{m^\prime=0}^{M-1} \xi^{n,m^\prime}_{p,\ell} \right] \\
&= 2\sqrt{\frac{\Delta t_{\ell-1} \Delta t_{\ell} D_{\Delta t_{\ell-1}} D_{\Delta t_{\ell}}}{M}} \E \left[ {\left(\xi^{n,m}_{p,\ell}\right)}^2 \right] \\
&= 2\Delta t_{\ell} \sqrt{D_{\Delta t_{\ell-1}} D_{\Delta t_{\ell}}}.\label{eq:brownian_covariance_coarse}
\end{align}

Using \eqref{eq:brownian_variance_fine}, \eqref{eq:brownian_variance_coarse} and \eqref{eq:brownian_covariance_coarse}, we can elaborate~\eqref{eq:var_brown_new} to obtain
\begin{align}
 \V\left[\Delta_{W,n}\right]&= 2 M \Delta t_\ell D_{\Delta t_\ell}  + 2 \Delta t_{\ell-1} D_{\Delta t_{\ell-1}} - 4 M \Delta t_{\ell} \sqrt{D_{\Delta t_{\ell-1}} D_{\Delta t_{\ell}}} \\
&= 2 \Delta t_{\ell-1} \left(D_{\Delta t_\ell} + D_{\Delta t_{\ell-1}} - 2 \sqrt{D_{\Delta t_{\ell-1}} D_{\Delta t_{\ell}}}\right).\label{eq:var_brownian_result_to_expand}
\end{align}
Note that \eqref{eq:var_brownian_result_to_expand} gives the variance $\V\left[\Delta_{W,n}\right]$ as a function of the time steps at both levels, $\Delta t_\ell$ and $\Delta t_{\ell-1}$, which also appear in the diffusion coefficients $D_{\Delta t_{\ell}}$ and $D_{\Delta t_{\ell-1}}$. Equivalently, using the relation $\Delta t_{\ell-1}=M\Delta t_\ell$, the variance can be written in terms of $\Delta t_\ell$ and the refinement factor $M$.

As the time steps are independent, the total variance after $N$ steps is
\begin{equation}
\label{eq:variance_Brownian_sum}
\V\left[\sum_{n=0}^{N-1}\Delta_{W,n}\right] =  N \V\left[\Delta_W\right],
\end{equation}
where we omit the subscript in the right hand side, as the increments are i.i.d. 
We now compute the Maclaurin series in $\Delta t_\ell$ of \eqref{eq:variance_Brownian_sum} as
\begin{equation}
\label{eq:maclaurin_brownian}
\lim_{\Delta t_\ell \to 0} \V \left[ \sum_{n=0}^{N-1} \Delta_{W,n} \right] = \frac{2t^*\tilde{v}^2}{\epsilon^2} \left(\sqrt{M} - 1 \right)^2 \Delta t_\ell + \mathcal{O}(\Delta t_\ell^2).
\end{equation}
We refer to the supplementary materials for a detailed computation of \eqref{eq:maclaurin_brownian}.
\end{proof}

\begin{remark}
In the supplementary materials we further show that
\begin{equation}
\label{eq:lim_browniain}
\lim_{\epsilon \to 0} \V \left[ \sum_{n=0}^{N-1} \Delta_{W,n} \right] = 0.
\end{equation}
This limit matches the expected convergence of \eqref{eq:GTmod} to \eqref{eq:heat}, as Brownian motion, i.e., the Monte Carlo discretization of \eqref{eq:heat}, is unbiased in the time step.
\end{remark} 

\subsection{Transport increments}
\label{sec:transport_analysis}

We now present and prove similar lemmas concerning the transport increments, i.e., the at least one collision has already taken place in both simulations, so that the initial condition on the velocity can be neglected.

\begin{lemma}
\label{lem:transport_expectation}
Given a simulation interval $\Delta t_{\ell-1}$, an independent fine simulation containing transport increments $\Delta T^{n,m}_{p,\ell}$ with time step sizes $\Delta t_{\ell}$ and a coarse simulation containing transport increments $\Delta T^{n}_{p,\ell-1}$ with time step size $\Delta t_{\ell-1}$, generated by Algorithm~\ref{alg:correlation}. Then, $$\E\left[\Delta_{T,n}\right] = \E\left[\sum_{m=0}^{M-1} \Delta T^{n,m}_{p,\ell} - \Delta T^{n}_{p,\ell-1}\right] = 0.$$
\end{lemma}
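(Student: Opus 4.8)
The plan is to mirror the proof of Lemma~\ref{lem:brownian_expectation}, with the martingale property of the Brownian increments replaced by the symmetry \eqref{eq:beta_properties} of the velocity distribution. Writing each sampled velocity as $V = \CV_{\Delta t}\VDR$ with $\VDR$ a draw from $\B(\bar v)$, linearity of expectation gives
\[
\E\left[\Delta_{T,n}\right] = \sum_{m=0}^{M-1}\E\left[\Delta T^{n,m}_{p,\ell}\right] - \E\left[\Delta T^{n}_{p,\ell-1}\right] = \Delta t_\ell\,\CV_{\Delta t_\ell}\sum_{m=0}^{M-1}\E\left[\VDR^{n,m}_{p,\ell}\right] - \Delta t_{\ell-1}\,\CV_{\Delta t_{\ell-1}}\,\E\left[\VDR^{n}_{p,\ell-1}\right],
\]
so it suffices to show that every velocity sample appearing here has mean zero.

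For the fine sub-steps, under the hypothesis that at least one collision has already occurred, each $V^{n,m}_{p,\ell}$ is either a velocity freshly drawn from $\mathcal{M}_{\Delta t_\ell}$, for which $\E[\VDR^{n,m}_{p,\ell}]=0$ by \eqref{eq:velocity_decomposition}--\eqref{eq:beta_properties}, or a velocity carried over unchanged from the preceding sub-step. I would dispatch the carry-over case by an induction on the (sub-)step index using the tower property: conditioning on the collision indicator, $V^{n,m}_{p,\ell}$ is either a fresh, mean-zero draw or is equal in distribution to the previous velocity, which is mean zero by the induction hypothesis; the base case is the velocity drawn at the first collision. The coarse velocity $V^{n}_{p,\ell-1}$ is handled in the same way, either by invoking the fact established in Section~\ref{sec:correlation} that Algorithm~\ref{alg:correlation} preserves the marginal statistics of the level-$(\ell-1)$ process, or directly from the coupling rule \eqref{eq:betacorr}: whenever a coarse collision occurs, $\VDR^{n}_{p,\ell-1}$ equals the i.i.d.\ sample $\VDR^{n,M-1,*}_{p,\ell}$ (mean zero), and otherwise it is carried over, closing the induction. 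Substituting $\E[\VDR^{n,m}_{p,\ell}]=\E[\VDR^{n}_{p,\ell-1}]=0$ into the display yields $\E[\Delta_{T,n}]=0$.

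The only delicate point is the bookkeeping for carried-over velocities: the velocity used in a given (sub-)step need not have been sampled in that step, so the mean-zero property must be propagated backward to the most recent collision. This is precisely where the hypothesis ``at least one collision has already taken place in both simulations'' is used --- it guarantees that the chain of carry-overs terminates at a genuine $\mathcal{M}_{\Delta t}$-sample rather than at the (possibly asymmetric) deterministic initial velocity, for which the statement could fail. Everything else is linearity of expectation together with the symmetry \eqref{eq:beta_properties}.
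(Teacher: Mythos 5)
Your proof is correct and follows essentially the same route as the paper's: linearity of expectation combined with the mean-zero property \eqref{eq:beta_properties} of the velocity distribution and the fact that the coupling preserves the coarse-level marginal statistics. The paper simply asserts $\E\left[V^{n,m}_{p,\Delta t_\ell}\right] = \E\left[V^{n}_{p,\Delta t_{\ell-1}}\right] = 0$ without spelling out the induction over carried-over velocities; your explicit treatment of that bookkeeping, and of where the ``at least one collision'' hypothesis enters, is just a more careful rendering of the same argument.
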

\begin{proof}
Based on \eqref{eq:beta_properties} and the fact that the correlation preserves the coarse model statistics (as demonstrated in Section~\ref{sec:correlation_transport}), we can  show that the expected value of both paths is zero, 
\begin{align}
&\E\left[\Delta \U^{n,m}_{p,\ell}\right] = \Delta t_\ell \E\left[V^{n,m}_{p,\Delta t_\ell}\right] = 0, \\
&\E\left[\Delta \U^{n}_{p,\ell-1}\right] = \Delta t_{\ell-1} \E\left[V^{n}_{p,\Delta t_{\ell-1}}\right] = 0.
\end{align}
The proof then follows from
\begin{equation}
\E\left[\Delta_{\U,n}\right] = \E\left[\sum_{m=0}^{M-1} \Delta \U^{n,m}_{p,\ell} - \Delta \U^{n}_{p,\ell-1}\right] = \sum_{m=0}^{M-1}  \E\left[\Delta \U^{n,m}_{p,\ell}\right] - \E\left[\Delta \U^{n}_{p,\ell-1}\right].
\end{equation}
\end{proof}

\begin{lemma}
\label{lem:transport_variance}
Given a fixed ratio $M$ between two time step sizes $\Delta t_{\ell-1} = M \Delta t_\ell$, a fixed simulation horizon $t^*=NM\Delta t_\ell$, an independent fine simulation containing transport increments $\Delta T^{n,m}_{p,\ell}$ with time step sizes $\Delta t_{\ell}$ and a coarse simulation containing transport increments $\Delta T^{n}_{p,\ell-1}$, generated by Algorithm~\ref{alg:correlation} with time step size $\Delta t_{\ell-1}$. Then, the variance $$\V \left[ \sum_{n=0}^{N-1} \Delta_{T,n} \right] = \V\left[\sum_{m=0}^{M-1} \Delta T^{n,m}_{p,\ell} - \Delta T^{n}_{p,\ell-1}\right]$$ converges $\mathcal{O}(\Delta t_\ell)$ to 0, asymptotically for $\Delta t_\ell \to 0$.
\end{lemma}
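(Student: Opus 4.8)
The plan is to reinterpret $\sum_{n=0}^{N-1}\Delta_{T,n}$ as the difference of the two total transport displacements over $[0,t^*]$. Reindexing the fine sub-steps by $k=nM+m$, write $S_\ell = \Delta t_\ell\sum_{k=0}^{NM-1}V^k_{p,\Delta t_\ell}$ and $S_{\ell-1}=\Delta t_{\ell-1}\sum_{n=0}^{N-1}V^n_{p,\Delta t_{\ell-1}}$, so that $\sum_{n=0}^{N-1}\Delta_{T,n}=S_\ell-S_{\ell-1}$. By Lemma~\ref{lem:transport_expectation} this quantity has mean zero, so its variance equals $\V[S_\ell]+\V[S_{\ell-1}]-2\Cov(S_\ell,S_{\ell-1})$. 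The essential difference with Lemma~\ref{lem:brownian_variance} is that the single-step increments are no longer independent: the velocity persists between collisions, so each of the three terms is a double sum of (cross-)covariances of the velocity-jump process rather than $N$ times a one-step quantity.

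For the two ``pure'' variance terms I would first record the autocovariance structure of the velocity-jump process: since a collision redraws the velocity from a mean-zero, unit-variance law (after the rescaling \eqref{eq:velocity_decomposition}--\eqref{eq:beta_properties}) and otherwise leaves it unchanged, one has $\V[V^k_{p,\Delta t_\ell}]=\CV_{\Delta t_\ell}^2$ and $\Cov(V^k_{p,\Delta t_\ell},V^{k+d}_{p,\Delta t_\ell})=\CV_{\Delta t_\ell}^2\,(\pncf)^{|d|}$. Hence $\V[S_\ell]=\Delta t_\ell^2\CV_{\Delta t_\ell}^2\sum_{k,k'=0}^{NM-1}(\pncf)^{|k-k'|}$, a closed-form geometric double sum, and analogously for $\V[S_{\ell-1}]$ with $\pncr$ and $N$ terms. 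Using $(\pncf)^{t^*/\Delta t_\ell}\to e^{-t^*/\epsilon^2}$, both are $\mathcal{O}(1)$ with an explicit expansion in $\Delta t_\ell$.

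The cross-covariance $\Cov(S_\ell,S_{\ell-1})=\Delta t_\ell\Delta t_{\ell-1}\sum_{k}\sum_{n}\Cov(V^k_{p,\Delta t_\ell},V^n_{p,\Delta t_{\ell-1}})$ is the crux, and here I would use the coupling of Algorithm~\ref{alg:correlation}. By Lemma~\ref{lem:collision_likelihood} a coarse collision forces a fine collision inside the same block, and when it occurs the coarse velocity is set to $(\CV_{\Delta t_{\ell-1}}/\CV_{\Delta t_\ell})$ times the fine velocity at that block's end; thus the two velocity processes agree in sign, and up to the $\CV$-ratio in magnitude, except on the ``collision mismatch'' events of Remark~\ref{rem:collision_probabilities}. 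I would make this precise by introducing a two-state (``synchronised''/``desynchronised'') description of the pair at coarse-step boundaries, which is a Markov chain whose transition probabilities follow from $\pcf$, $\pcr$ and the maximum-of-uniforms coupling \eqref{eq:alphacorr}; by the binomial estimate already used in the proof of Lemma~\ref{lem:collision_likelihood} the per-step desynchronisation probability is $\mathcal{O}(\Delta t_\ell^2)$, hence $\mathcal{O}(\Delta t_\ell)$ over the whole horizon, while a desynchronised excursion lasts $\mathcal{O}(\epsilon^2/\Delta t_{\ell-1})$ coarse steps. Conditioning on this chain yields $\Cov(V^k_{p,\Delta t_\ell},V^n_{p,\Delta t_{\ell-1}})$ and thus $\Cov(S_\ell,S_{\ell-1})$.

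Assembling the three pieces, the contributions in which the fine and coarse processes stay synchronised cancel up to a term of order $\mathcal{O}(\Delta t_\ell^2)$ coming from the $\mathcal{O}(\Delta t_\ell)$ gap between $\CV_{\Delta t_\ell}$ and $\CV_{\Delta t_{\ell-1}}$, and the surviving $\mathcal{O}(\Delta t_\ell)$ term is produced by the rare collision-pattern mismatches --- each of probability $\mathcal{O}(\Delta t_\ell)$ over $[0,t^*]$ and each causing an $\mathcal{O}(1)$ displacement discrepancy over the ensuing desynchronised stretch. Taking the Maclaurin series in $\Delta t_\ell$, exactly as in Lemma~\ref{lem:brownian_variance} and deferring the bookkeeping to the supplementary material, then gives $\V[\sum_{n=0}^{N-1}\Delta_{T,n}]=c\,\Delta t_\ell+\mathcal{O}(\Delta t_\ell^2)$ for an explicit $\epsilon$- and $M$-dependent constant $c$, which is the claimed $\mathcal{O}(\Delta t_\ell)$ convergence. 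I expect the third step to be the real obstacle: because the velocity process has memory and the coupling is nonlinear (through the maximum of uniforms and the discrete collision-pattern discrepancies it induces), the cross-covariance cannot be obtained by the short martingale-style argument that sufficed for the Brownian increments, and one must instead track the joint collision structure of the two coupled simulations.
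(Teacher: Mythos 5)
Your proposal is correct and follows essentially the same route as the paper: the paper likewise expands the variance into per-increment variances plus all fine--fine, coarse--coarse and fine--coarse covariances, computes each one by conditioning on the joint collision pattern (your synchronised/desynchronised chain is exactly its case analysis in Appendix~\ref{app:covariances}, with the probabilities $\pncf$, $\pncr$ and the conditional correlation probability $(1-\pncr)/(1-\pncf^M)$ playing the role of your transition probabilities), and then takes the Maclaurin series in $\Delta t_\ell$. Your order-of-magnitude accounting of where the linear term originates is consistent with the paper's explicit leading coefficient $2\tilde{v}^2(M-1)\left(e^{-t^*/\epsilon^2}-1+t^*/\epsilon^2\right)\Delta t_\ell$.
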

\begin{proof}
As there is a non-zero probability for both the simulation at level $\ell$ and at level $\ell-1$ that no collision occurs in a given time step $n$, the differences between these two simulations are themselves correlated across time steps. The variance of the difference after $N$ steps is thus given by
\begin{equation}
\label{eq:global_var}
\V \left[ \sum_{n=0}^{N-1} \Delta_{\U,n} \right] = \sum_{n=0}^{N-1} \V\left[\Delta_{\U,n}\right]+ 2\sum_{n=0}^{N-2} \sum_{n^\prime=n+1}^{N-1}\Cov \left( \Delta_{\U,n} , \Delta_{\U,n^\prime} \right), \quad \text{with}
\end{equation}
\begin{equation}
\label{eq:transport_variance}
\begin{split}
\V\left[\Delta_{\U,n}\right]= \sum_{m=0}^{M-1} \V\left[\Delta \U^{n,m}_{p,\ell}\right]+ \V\left[\Delta \U^{n}_{p,\ell-1}\right]- 2\sum_{m=0}^{M-1} \Cov\left(\Delta \U^{n,m}_{p,\ell}, \Delta \U^{n}_{p,\ell-1} \right)& \\
+ 2 \sum_{m=0}^{M-2} \sum_{m^\prime=m+1}^{M-1}\!\! \Cov\left(\Delta \U^{n,m}_{p,\ell}, \Delta \U^{n,m^\prime}_{p,\ell} \right)&.
\end{split}
\end{equation}

Given Lemma~\ref{lem:transport_expectation}, we compute the variances of the individual increments as the expectation of their squared values:
\begin{equation}
\label{eq:transport_variance_fine_coarse}
\V\left[\Delta \U^{n,m}_{p,\ell}\right] = \Delta t_\ell^2 \CV_{\Delta t_\ell}^2 \quad \text{and} \quad \V\left[\Delta \U^{n}_{p,\ell-1}\right] = \Delta t_{\ell-1}^2 \CV_{\Delta t_{\ell-1}}^2.
\end{equation}
The remaining covariance sums are workout out in Appendix~\ref{app:covariances}.

We now plug the computed variances and covariances (\eqref{eq:transport_variance_fine_coarse}, \eqref{eq:cov_sum_transport_coarse_fine}, \eqref{eq:cov_sum_transport_fine} and \eqref{eq:final_covar}) into \eqref{eq:global_var} and compute the first term of the Maclaurin series in $\Delta t_\ell$:
\begin{equation}
\label{eq:maclaurin_transport}
\lim_{\Delta t_\ell \to 0} \V \left[ \sum_{n=0}^{N-1} \Delta_{\U,n} \right] = 2\tilde{v}^2(M-1)\left(e^{-t^*/\epsilon^2} - 1 + \frac{t^*}{\epsilon^2} \right) \Delta t_\ell + \mathcal{O}(\Delta t_\ell^2).
\end{equation}
We refer to the supplementary materials for a detailed computation of \eqref{eq:maclaurin_transport}.
\end{proof}

\begin{remark}
In the supplementary materials we further show that
\begin{equation}
\label{eq:limit_transport}
\lim_{\epsilon \to 0} \V \left[ \sum_{n=0}^{N-1} \Delta_{\U,n} \right] = 0.
\end{equation}
This limit matches the expected convergence of \eqref{eq:GTmod} to \eqref{eq:heat}, as \eqref{eq:heat} no longer contains transport behavior.
\end{remark}  

\subsection{Velocity expectations and variances}
\label{sec:velocity_exp_var}

We now present the same lemmas concerning the expectation and variance of the differences of velocities.

\begin{lemma}
\label{lem:velocity_expectation}
Given a simulation interval $\Delta t_{\ell-1}$, an independent fine simulation containing velocities $V^{n,m}_{p,\ell}$ with time step size $\Delta t_{\ell}$ and a coarse simulation containing velocities $V^{n}_{p,\ell-1}$ with time step size $\Delta t_{\ell-1}$, generated by Algorithm~\ref{alg:correlation}. Then, given that both simulations have experienced at least one collision, $\E\left[V_{p,\Delta t_\ell}^{n,m}- V_{p,\Delta t_{\ell-1}}^{n}\right] = 0$.
\end{lemma}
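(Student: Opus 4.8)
The plan is to reduce the statement to the mean-zero property~\eqref{eq:beta_properties} of the normalized velocity distribution $\B(\bar v)$. The observation driving the proof is that, once at least one collision has occurred, the velocity in either simulation is nothing but the (deterministic) characteristic velocity of the corresponding time-step-dependent distribution multiplied by a single draw $\VDR \sim \B(\bar v)$, so its expectation vanishes termwise.

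First I would recall the structure of the collision step~\eqref{eq:collision}. Whenever a collision is performed at level $\ell$ in some sub-step, the new velocity is $\CV_{\Delta t_\ell}\VDR$ with $\VDR \sim \B(\bar v)$, by~\eqref{eq:velocity_decomposition} and the definition of $\CV_{\Delta t_\ell}$; between collisions the velocity is simply carried forward unchanged. Hence, on the event that at least one collision has occurred up to sub-step $(n,m)$, one may write $V_{p,\Delta t_\ell}^{n,m} = \CV_{\Delta t_\ell}\,\VDR_{p,\ell}^{k}$, where $\VDR_{p,\ell}^{k}$ is the normalized velocity drawn at the most recent collision; the index $k$ is random, but each such draw is distributed according to $\B(\bar v)$ and independent of the collision times. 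The same holds at level $\ell-1$: by the coupling~\eqref{eq:betacorr} we have $V_{p,\Delta t_{\ell-1}}^{n} = \CV_{\Delta t_{\ell-1}}\,\VDR_{p,\ell-1}^{n}$ with $\VDR_{p,\ell-1}^{n} \sim \B(\bar v)$, since the correlation preserves the coarse-level statistics, as shown in Section~\ref{sec:correlation_transport}. (In fact mean-zero of each velocity does not even use the coupling; it would hold for an independent coarse simulation just as well.)

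Next I would apply the tower property of conditional expectation: conditioning on the random index of the last collision and using that $\CV_{\Delta t_\ell}$ and $\CV_{\Delta t_{\ell-1}}$ are deterministic constants while $\E[\VDR] = 0$ for every $\VDR \sim \B(\bar v)$, it follows that $\E[V_{p,\Delta t_\ell}^{n,m}] = \CV_{\Delta t_\ell}\E[\VDR] = 0$ and, likewise, $\E[V_{p,\Delta t_{\ell-1}}^{n}] = 0$. Linearity of expectation then yields $\E[V_{p,\Delta t_\ell}^{n,m} - V_{p,\Delta t_{\ell-1}}^{n}] = 0$, which is the claim.

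The computation itself is essentially immediate; the only point requiring a little care is the conditioning step, i.e.\ making explicit that the velocity at step $(n,m)$ is a deterministic scalar times some $\B(\bar v)$-distributed draw whose provenance (which collision produced it) is random but whose distribution --- and in particular its mean --- does not depend on that randomness. Once this is phrased cleanly, the conclusion drops out of~\eqref{eq:beta_properties}.
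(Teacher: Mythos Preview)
Your proposal is correct and takes essentially the same approach as the paper: both arguments observe that, once a collision has occurred, each velocity is a deterministic characteristic velocity times a $\B(\bar v)$-distributed draw, invoke~\eqref{eq:beta_properties} to get $\E[V_{p,\Delta t_\ell}^{n,m}] = \E[V_{p,\Delta t_{\ell-1}}^{n}] = 0$, and conclude by linearity. Your version is somewhat more explicit about the conditioning on the last collision time, whereas the paper simply asserts the mean-zero property directly, but the underlying argument is the same.
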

\begin{proof}
The expected values of the individual velocities of both fine and coarse simulations are zero by \eqref{eq:beta_properties}
\begin{equation}
\E\left[V_{p,\Delta t_\ell}^{n,m}\right] = \E\left[V_{p,\Delta t_{\ell-1}}^{n}\right] = 0.
\end{equation}
This means that the expected value of their difference is also zero.
\end{proof}

\begin{lemma}
\label{lem:velocity_variance}
Given a fixed ratio $M$ between two time step sizes $\Delta t_{\ell-1} = M \Delta t_\ell$, a fixed simulation horizon $t^*=NM\Delta t_\ell$, an independent fine simulation with time step sizes $\Delta t_{\ell}$ and a coarse simulation, generated by Algorithm~\ref{alg:correlation} with time step size $\Delta t_{\ell-1}$. Then, given that both simulations have experienced at least one collision, the variance $\V\left[V_{p,\Delta t_\ell}^{n,m} - V_{p,\Delta t_{\ell-1}}^{n}\right]$ converges $\mathcal{O}(\Delta t_\ell)$ to 0, asymptotically for $\Delta t_\ell \to 0$.
\end{lemma}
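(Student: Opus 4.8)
The plan is to reduce the claim to the behaviour, as $\Delta t_\ell \to 0$, of the correlation between the rescaled velocities $\VDR^{n,m}_{p,\ell}$ and $\VDR^{n}_{p,\ell-1}$ underlying $V^{n,m}_{p,\Delta t_\ell}=\CV_{\Delta t_\ell}\VDR^{n,m}_{p,\ell}$ and $V^{n}_{p,\Delta t_{\ell-1}}=\CV_{\Delta t_{\ell-1}}\VDR^{n}_{p,\ell-1}$. By Lemma~\ref{lem:velocity_expectation} and the normalization \eqref{eq:beta_properties}, the variance equals the second moment of the difference, which expands to
\begin{equation*}
\V\left[V^{n,m}_{p,\Delta t_\ell}-V^{n}_{p,\Delta t_{\ell-1}}\right]=\CV_{\Delta t_\ell}^2+\CV_{\Delta t_{\ell-1}}^2-2\,\CV_{\Delta t_\ell}\CV_{\Delta t_{\ell-1}}\,\E\left[\VDR^{n,m}_{p,\ell}\VDR^{n}_{p,\ell-1}\right].
\end{equation*}
Writing $q=1-\E[\VDR^{n,m}_{p,\ell}\VDR^{n}_{p,\ell-1}]$, the right-hand side becomes $(\CV_{\Delta t_\ell}-\CV_{\Delta t_{\ell-1}})^2+2q\,\CV_{\Delta t_\ell}\CV_{\Delta t_{\ell-1}}$. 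Since $\CV_{\Delta t_\ell}=\tfrac{\epsilon}{\epsilon^2+\Delta t_\ell}\CV$ and $\CV_{\Delta t_{\ell-1}}=\tfrac{\epsilon}{\epsilon^2+M\Delta t_\ell}\CV$, one checks directly that $(\CV_{\Delta t_\ell}-\CV_{\Delta t_{\ell-1}})^2=\mathcal{O}(\Delta t_\ell^2)$, while $\CV_{\Delta t_\ell}\CV_{\Delta t_{\ell-1}}$ stays bounded (it tends to $\CV^2/\epsilon^2$). Hence it suffices to prove $q=\mathcal{O}(\Delta t_\ell)$.

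Next I would give $q$ a probabilistic meaning. Both $\VDR^{n,m}_{p,\ell}$ and $\VDR^{n}_{p,\ell-1}$ are the rescaled velocities drawn at the most recent collision affecting, respectively, the fine sub-step $(n,m)$ and the coarse step $n$; by \eqref{eq:betacorr} every coarse collision resets the coarse velocity to the $\VDR$ drawn at the last fine collision of that coarse step, so immediately after such a collision both simulations descend from a common collision event. The collision pattern $\mathcal{C}$ (which collisions occur at which level and step) is a function of the uniform variates $\PT$ only, hence independent of the $\VDR$ values. Conditioning on $\mathcal{C}$: on the event that the two velocities descend from the same collision event one has $\VDR^{n,m}_{p,\ell}=\VDR^{n}_{p,\ell-1}$ and $\E[\VDR^{n,m}_{p,\ell}\VDR^{n}_{p,\ell-1}\mid\mathcal{C}]=\E[\VDR^2]=1$; otherwise the two velocities are drawn at distinct collision events, hence independent with mean $0$, so the conditional expectation vanishes. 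Taking expectations, $q=\mathbb{P}(\text{the velocities at }(n,m)\text{ do not descend from a common collision event})$.

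Finally I would bound this desynchronization probability by reading Algorithm~\ref{alg:correlation} backwards from $(n,m)$. The two velocities descend from a common event unless either (i) one of the $m\le M$ fine sub-steps $0,\dots,m-1$ of coarse step $n$ contains a fine collision, or (ii) walking back over coarse steps $n-1,n-2,\dots$ one first meets a coarse step that contains a fine collision but no coarse collision (which gives the fine velocity a fresh, independent value) before meeting one that contains a coarse collision (which re-synchronizes). Event (i) has probability at most $M\pcf=\mathcal{O}(\Delta t_\ell)$. For (ii), the collision indicators are i.i.d.\ across coarse steps; a coarse collision (re-synchronization) has probability $\pcr=\tfrac{M\Delta t_\ell}{\epsilon^2+M\Delta t_\ell}=\Theta(\Delta t_\ell)$ per step, and, using that a coarse collision implies a fine collision (Lemma~\ref{lem:collision_likelihood}), a fine-but-no-coarse collision (desynchronization) has probability $\pncr-\pncf^{M}$ per step, which a Taylor expansion of the strict inequality in \eqref{eq:collision_proof_inequality} shows to be $\tfrac{M(M-1)}{2\epsilon^4}\Delta t_\ell^2+\mathcal{O}(\Delta t_\ell^3)=\mathcal{O}(\Delta t_\ell^2)$. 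Since by hypothesis at least one coarse collision has already occurred, the backward walk terminates, and by the renewal property the probability that it terminates with a desynchronization is $\tfrac{\pncr-\pncf^{M}}{1-\pncf^{M}}=\mathcal{O}(\Delta t_\ell)$, because $1-\pncf^{M}\ge\pcr=\Theta(\Delta t_\ell)$. Combining (i) and (ii) gives $q=\mathcal{O}(\Delta t_\ell)$ uniformly in $n$ and $m$, and substituting back yields $\V[V^{n,m}_{p,\Delta t_\ell}-V^{n}_{p,\Delta t_{\ell-1}}]=\mathcal{O}(\Delta t_\ell^2)+\mathcal{O}(1)\cdot\mathcal{O}(\Delta t_\ell)=\mathcal{O}(\Delta t_\ell)$. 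The main obstacle is step (ii): turning the ``walk back until re-/desynchronization'' picture into a clean renewal computation, and in particular verifying that desynchronization is genuinely one order in $\Delta t_\ell$ rarer than resynchronization and that a broken synchronization really does decouple the two velocities.
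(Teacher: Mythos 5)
Your proposal is correct and rests on the same two pillars as the paper's proof: the expansion of the variance into $\CV_{\Delta t_\ell}^2+\CV_{\Delta t_{\ell-1}}^2-2\CV_{\Delta t_\ell}\CV_{\Delta t_{\ell-1}}\E\bigl[\VDR^{n,m}_{p,\ell}\VDR^{n}_{p,\ell-1}\bigr]$ via Lemma~\ref{lem:velocity_expectation} and \eqref{eq:beta_properties}, and the observation that the rescaled velocities either coincide (when they descend from a common collision event, traced backward through the coupled collision pattern) or are independent draws with zero mean. That case analysis is exactly the one the paper carries out in Appendix~\ref{app:covariances} to obtain \eqref{eq:corr_transport_coarse_fine}, which Section~\ref{sec:velocity_exp_var} then reuses. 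Where you diverge is in the quantitative step: the paper evaluates the correlation in closed form, $\E[\VDR^{n,m}_{p,\ell}\VDR^{n}_{p,\ell-1}]=\frac{1-\pncr}{1-\pncf^M}\pncf^{m}$, and after the $\frac1M\sum_m$ average exploits the identity $\CV_{\Delta t_\ell}\CV_{\Delta t_{\ell-1}}\frac{1}{M}\frac{\pcr}{\pcf}=\CV_{\Delta t_{\ell-1}}^2$ to collapse the variance to the exact expression $\CV_{\Delta t_\ell}^2-\CV_{\Delta t_{\ell-1}}^2$ in \eqref{eq:velocity_var}, whose Maclaurin expansion gives the explicit constant in \eqref{eq:maclaurin_velocity}. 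You instead only upper-bound the desynchronization probability $q$ by summing the two mechanisms (a fine collision inside the current coarse step, probability $\mathcal{O}(\Delta t_\ell)$; termination of the backward renewal walk at a fine-but-not-coarse collision, probability $\frac{\pncr-\pncf^M}{1-\pncf^M}=\mathcal{O}(\Delta t_\ell)$ since $\pncr-\pncf^M=\mathcal{O}(\Delta t_\ell^2)$ while $1-\pncf^M=\Theta(\Delta t_\ell)$). Both rate estimates are correct, and your splitting $(\CV_{\Delta t_\ell}-\CV_{\Delta t_{\ell-1}})^2+2q\,\CV_{\Delta t_\ell}\CV_{\Delta t_{\ell-1}}$ cleanly isolates why the result is first rather than second order. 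What you give up is the explicit leading coefficient $2\tilde v^2(M-1)/\epsilon^4$, which the lemma statement does not require but which the paper records and uses for its analytical variance curves; what you gain is a proof that does not depend on the fortuitous cancellation noted after \eqref{eq:cov_sum_transport_coarse_fine}. The two points you flag as remaining obstacles — the renewal computation and the decoupling after a broken synchronization — are both sound as you sketched them: the collision indicators are i.i.d.\ across coarse steps and independent of the $\B$-draws, and any two velocities originating from distinct collision events are independent draws from $\B$, so their product has zero conditional mean.
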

\begin{proof}
We compute the variance of this difference as
\begin{align}
\!\!\!\!\V\!\left[V_{p,\Delta t_\ell}^{n,m}\!\!-\! V_{p,\Delta t_{\ell-1}}^{n}\right] \!&= \E\!\left[\left(V_{p,\Delta t_\ell}^{n,m}- V_{p,\Delta t_{\ell-1}}^{n}\right)^2\right]\\
&=\E\!\left[\!\left(V_{p,\Delta t_\ell}^{n,m}\right)^2\!\right] \!\!+\! \E\!\left[\!\left(V_{p,\Delta t_{\ell-1}}^{n}\right)^2\!\right] \!\!-\! 2\E\!\left[\!V_{p,\Delta t_\ell}^{n,m} \! V_{p,\Delta t_{\ell-1}}^{n}\!\right]\label{eq:velocity_var1}\\
\label{eq:velocity_var2}&=\CV_\ell^2 + \CV_{\ell-1}^2 - 2\CV_\ell\CV_{\ell-1}\frac{1}{M} \sum_{m=0}^{M-1}\E\!\left[\VD^{n,m}_{p,\ell}\VD^{n}_{p,\ell-1}\right]\\
\label{eq:velocity_var3}&=\CV_\ell^2 + \CV_{\ell-1}^2 - 2\CV_{\ell-1}^2\\
\label{eq:velocity_var}&=\CV_\ell^2 - \CV_{\ell-1}^2,
\end{align}
where we use \eqref{eq:beta_properties} in the step from \eqref{eq:velocity_var1} to \eqref{eq:velocity_var2} and where the step from \eqref{eq:velocity_var2} to \eqref{eq:velocity_var3} follows the same logic as the calculation of \eqref{eq:cov_sum_transport_coarse_fine} in Appendix~\ref{app:covariances}.

We now compute the Maclaurin series in $\Delta t_\ell$ of \eqref{eq:velocity_var} as
\begin{equation}
\label{eq:maclaurin_velocity}
\lim_{\Delta t_\ell \to 0} \V\left[V_{p,\Delta t_\ell}^{n,m}- V_{p,\Delta t_{\ell-1}}^{n}\right] = \frac{2\tilde{v}^2(M-1)}{\epsilon^4} \Delta t_\ell + \mathcal{O}(\Delta t_\ell^2).
\end{equation}
We refer to the supplementary materials for a detailed computation of \eqref{eq:maclaurin_velocity}.
\end{proof}
\begin{remark}
In the supplementary materials we further show that
\begin{equation}
\label{eq:limit_transport}
\lim_{\epsilon \to 0} \V\left[V_{p,\Delta t_\ell}^{n,m}- V_{p,\Delta t_{\ell-1}}^{n}\right] = 0.
\end{equation}
This limit matches the expected convergence of \eqref{eq:GTmod} to \eqref{eq:heat}, as \eqref{eq:heat} no longer contains transport behavior.
\end{remark}  

\subsection{Proof of convergence}
\label{sec:proof} 

Now that we have established some key properties of the correlation between the coupled trajectories at level $\ell$ and $\ell-1$, we have everything in place to derive bounds on the difference estimators~\eqref{eq:MClestimator}. To this end, we assume that the quantity of interest $F(x,v)$ is Lipschitz continuous in both position and velocity, i.e., there exist constants $K_x$ and $K_v$ so that,
\begin{equation}
\label{eq:lipschitz}
|F(x_1,v_1) - F(x_2,v_2)| \leq K_x |x_1 - x_2| + K_v |v_1 - v_2|
\end{equation}
holds, for all values in the domains $\D_x$ and $\D_v$. We can now prove the convergence of our scheme, i.e., consistency as $\ell\to\infty$. This proof is based on Lemmas~\ref{lem:brownian_expectation}--\ref{lem:velocity_variance} and is structured as follows: First we present three lemmas, which verify Assumptions 2--4 in Theorem \ref{thm:giles}. We then present a convergence theorem for our scheme in Theorem \ref{thm:convergence}.

First, we verify the rate of decreasing bias (Theorem \ref{thm:giles}, Assumption 2).
\begin{lemma}
\label{lem:expected}
Given $F(x,v)$, Lipschitz in position and velocity, and a sequence of approximations $\ell = 0,\dots,\infty$, coupled as described in algorithm \ref{alg:correlation} with time steps $\Delta t_{\ell-1} = M\Delta t_\ell$, then $\exists L_1, 0 < L_1 < \infty : \exists c_1 : \forall \ell \geq L_1 : \left|\E\left[\hat{F}_\ell - F\right]\right| \leq c_1 2^{-\alpha\ell}$, with $\alpha=\log_2(M)$.
\end{lemma}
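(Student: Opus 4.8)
The plan is to reduce the statement to the known first-order weak accuracy of the asymptotic-preserving particle scheme and then to rewrite the resulting $\Delta t_\ell$-bound in the dyadic form required by Theorem~\ref{thm:giles}. First I would record that, by construction, $\Delta t_\ell = \Delta t_{\ell-1}/M = \dots = \Delta t_0\, M^{-\ell}$, so that $\Delta t_\ell = \Delta t_0\, 2^{-\alpha\ell}$ with $\alpha = \log_2 M$; hence any bound of the form $\left|\E\left[\hat{F}_\ell - F\right]\right| \le C\,\Delta t_\ell$ immediately yields the claim with $c_1 = C\,\Delta t_0$.

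The core of the argument is therefore the estimate $\left|\E\left[\hat{F}_\ell - F\right]\right| \le C\,\Delta t_\ell$ for $\ell$ large enough. I would split this weak error into a modelling part and a discretization part: on the one hand, the solution of the modified equation~\eqref{eq:GTmod} with time step $\Delta t_\ell$ differs from the solution of the kinetic equation~\eqref{eq:kineticdimless} by $\mathcal{O}(\Delta t_\ell)$ in the appropriate (weak, respectively $L^1$) sense, as shown in~\cite{Dimarco2018}; on the other hand, the particle update~\eqref{eq:transport}--\eqref{eq:collision} is a consistent, first-order-in-$\Delta t$ discretization of~\eqref{eq:GTmod}, being an operator splitting of its transport--diffusion and collision parts, which is $\mathcal{O}(\Delta t)$ by~\cite{Pareschi2005a}, so it contributes at most a further $\mathcal{O}(\Delta t_\ell)$. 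Since $F$ is Lipschitz in $(x,v)$ by~\eqref{eq:lipschitz}, and since the laws involved have bounded first moments (which follows from Lemmas~\ref{lem:brownian_expectation}--\ref{lem:velocity_variance} together with the boundedness of the initial data), testing $F$ against the difference of these laws is controlled by $(K_x+K_v)$ times the same $\mathcal{O}(\Delta t_\ell)$ bound. Collecting the two contributions gives $\left|\E\left[\hat{F}_\ell - F\right]\right| \le C(\epsilon, t^*, F, K_x, K_v)\,\Delta t_\ell$, valid once $\Delta t_\ell$ is small enough for the asymptotic estimates to apply; this fixes $L_1$ as the smallest level for which $\Delta t_{L_1}$ lies in that regime, with $0 < L_1 < \infty$ because $\Delta t_\ell \to 0$.

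I expect the main obstacle to be the justification of the first-order weak error for a merely Lipschitz observable together with the clean separation of the modelling error (the passage from~\eqref{eq:GTmod} to~\eqref{eq:kineticdimless}) from the time-stepping error of the particle scheme: the standard weak-error theory for stochastic differential equations asks for $C^2$ test functions, so one has to lean on the PDE-level estimate of~\cite{Dimarco2018}, for which a Lipschitz $F$ with controlled first moments is enough. A tempting alternative --- writing $\E\left[\hat{F}_\ell - F\right] = \sum_{k \ge \ell} \E\left[\hat{F}_k - \hat{F}_{k+1}\right]$ and bounding each term by Cauchy--Schwarz together with the variance Lemmas~\ref{lem:brownian_variance}--\ref{lem:velocity_variance} --- only delivers the rate $\mathcal{O}\!\left(\sqrt{\Delta t_\ell}\right)$, i.e.\ $\alpha = \tfrac{1}{2}\log_2 M$, and recovering the full rate that way would require a second-order Taylor expansion of $F$ that its Lipschitz regularity does not support, so I would not pursue it. Finally, I would note that no uniformity in $\epsilon$ is claimed or needed here, which is why the constant $C$ is harmless even though it degrades as $\epsilon \to 0$, and that the condition $\alpha \ge \tfrac{1}{2}\min(\beta,\gamma)$ of Theorem~\ref{thm:giles} is met because the companion variance and cost lemmas give $\beta = \gamma = \log_2 M = \alpha$.
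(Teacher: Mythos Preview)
Your proposal is correct and follows essentially the same route as the paper: both arguments reduce the claim to the first-order weak accuracy in $\Delta t_\ell$ of the asymptotic-preserving scheme, citing the $\mathcal{O}(\Delta t)$ modelling error of the IMEX-modified equation~\eqref{eq:GTmod} from~\cite{Dimarco2018} together with the $\mathcal{O}(\Delta t)$ splitting error, and then rewrite $\Delta t_\ell = \Delta t_0 M^{-\ell}$ in dyadic form.

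Your execution differs slightly from the paper's in a way worth noting. The paper invokes the mean value theorem to pick a deterministic point $(X^*,V^*)$ with $F(X^*,V^*)=\E[F]$ and then bounds $\E\bigl[|X^N_{p,\Delta t}-X^*|\bigr]$ and $\E\bigl[|V^N_{p,\Delta t}-V^*|\bigr]$; you instead work directly with the difference of laws and use the Lipschitz property as a Kantorovich--Rubinstein-type bound. Your route is cleaner and sidesteps the awkwardness of the paper's formulation (the quantity $\E\bigl[|X^N_{p,\Delta t}-X^*|\bigr]$ for a fixed $X^*$ does not obviously go to zero with $\Delta t$, so the paper's phrasing is at best a shorthand for the weak error). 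Your explicit acknowledgment that standard SDE weak-error theory requires $C^2$ observables, and that one must instead lean on the PDE-level estimate for Lipschitz $F$, is a point the paper glosses over. The extra remarks you make---dismissing the telescoping-plus-Cauchy--Schwarz route as delivering only rate $\tfrac{1}{2}\log_2 M$, and noting that no $\epsilon$-uniformity is claimed---are accurate and helpful context, though not needed for the lemma itself.
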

\begin{proof}
By \eqref{eq:lipschitz} we have $$\left|\E\left[\hat{F}_\ell - F\right]\right| \leq \E\left[\left|\hat{F}_\ell - F\right|\right]\leq K_x\E\left[\left|X^N_{p,\Delta t}-X^*\right|\right] + K_v\E\left[\left|V^N_{p,\Delta t}-V^*\right|\right],$$ with $\left(X^*,V^*\right)$ the point in the velocity-phase space which produces the expected value $\hat{F}$ of $F(x,v)$, which exists by the mean value theorem. Given that both time-splitting and the IMEX-equation are linear approximations in $\Delta t$ in the limit $\Delta t \to 0$, we can observe that both $\E\left[\left|X^N_{p,\Delta t}-X^*\right|\right]$ and $\E\left[\left|V^N_{p,\Delta t}-V^*\right|\right]$ go to zero with the weak order of the explicit first order simulation method, i.e.,
\begin{equation}
\left|\E\left[\hat{F}_\ell - F\right]\right| \leq c^\prime_1 M^{-\ell} = c^\prime_1 2^{-\log_2(M)\ell},
\end{equation}
meaning there exists an upper bound $c_1 2^{-\log_2(M)\ell}$, once $\ell$ is sufficiently large.
\end{proof}

Second, we verify the rate of decreasing variance (Theorem \ref{thm:giles}, Assumption 3).
\begin{lemma}
\label{lem:variance}
Given $F(x,v)$, Lipschitz in position and velocity, and a sequence of approximations $\ell = 0,\dots,\infty$, coupled as described in algorithm \ref{alg:correlation} with time steps $\Delta t_{\ell-1} = M\Delta t_\ell$, then $\exists L_2, 0 < L_2 < \infty : \exists c_2 : \forall \ell \geq L_2 : \V\left[\hat{F}_\ell - F_{\ell-1}\right]\leq c_2 2^{-\beta\ell}$, with $\beta=\log_2(M)$.
\end{lemma}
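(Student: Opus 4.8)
The plan is to follow the template of the proof of Lemma~\ref{lem:expected}, but to work with second moments instead of first moments. I would start from $\V[\hat F_\ell - \hat F_{\ell-1}] \le \E[(\hat F_\ell - \hat F_{\ell-1})^2]$ and invoke the Lipschitz assumption \eqref{eq:lipschitz} together with the elementary bound $(a+b)^2 \le 2a^2 + 2b^2$ to obtain
\[
\V\bigl[\hat F_\ell - \hat F_{\ell-1}\bigr] \le 2 K_x^2\, \E\Bigl[\bigl|X^{N_\ell}_{p,\Delta t_\ell} - X^{N_{\ell-1}}_{p,\Delta t_{\ell-1}}\bigr|^2\Bigr] + 2 K_v^2\, \E\Bigl[\bigl|V^{N_\ell}_{p,\Delta t_\ell} - V^{N_{\ell-1}}_{p,\Delta t_{\ell-1}}\bigr|^2\Bigr],
\]
thereby reducing the claim to second-moment bounds on the coupled position and velocity differences at the final time $t^*$.

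Next I would decompose the position difference along the coupled simulation \eqref{eq:deltaX} into its Brownian and transport parts, writing $X^{N_\ell}_{p,\Delta t_\ell} - X^{N_{\ell-1}}_{p,\Delta t_{\ell-1}} = \sum_{n=0}^{N-1} \Delta_{W,n} + \sum_{n=0}^{N-1} \Delta_{\U,n}$ (putting the transient before the first collision aside, see below). By Lemmas~\ref{lem:brownian_expectation} and~\ref{lem:transport_expectation} this difference has zero mean, so its second moment equals its variance; applying $\V[A+B] \le 2\V[A] + 2\V[B]$ and then Lemmas~\ref{lem:brownian_variance} and~\ref{lem:transport_variance} yields $\E[|X^{N_\ell}_{p,\Delta t_\ell} - X^{N_{\ell-1}}_{p,\Delta t_{\ell-1}}|^2] = \mathcal{O}(\Delta t_\ell)$ as $\Delta t_\ell \to 0$. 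In the same way, Lemmas~\ref{lem:velocity_expectation} and~\ref{lem:velocity_variance} give $\E[|V^{N_\ell}_{p,\Delta t_\ell} - V^{N_{\ell-1}}_{p,\Delta t_{\ell-1}}|^2] = \mathcal{O}(\Delta t_\ell)$. Combining the three estimates with the display above gives $\V[\hat F_\ell - \hat F_{\ell-1}] = \mathcal{O}(\Delta t_\ell)$.

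To finish, I would use the geometric relation $\Delta t_\ell = \Delta t_0 M^{-\ell} = \Delta t_0\, 2^{-\log_2(M)\,\ell}$: the bound $\V[\hat F_\ell - \hat F_{\ell-1}] \le C\,\Delta t_\ell$ valid for $\Delta t_\ell$ small enough translates into $\V[\hat F_\ell - \hat F_{\ell-1}] \le c_2\, 2^{-\beta \ell}$ with $\beta = \log_2(M)$, valid for all $\ell \ge L_2$, where $L_2$ is chosen so that $\Delta t_{L_2}$ already lies in the asymptotic regime of the three lemmas and $c_2$ absorbs the leading Maclaurin coefficients together with $K_x$ and $K_v$. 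This is precisely Assumption~3 of Theorem~\ref{thm:giles}.

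I expect the main obstacle to be upgrading the asymptotic statements of Lemmas~\ref{lem:brownian_variance}--\ref{lem:velocity_variance} --- which are leading-order Maclaurin expansions in $\Delta t_\ell$ --- into a genuine uniform inequality valid for every $\ell \ge L_2$, i.e., controlling the $\mathcal{O}(\Delta t_\ell^2)$ remainders (and, if one also wants the $\epsilon$-robustness hinted at by the remarks, doing so uniformly in $\epsilon$). A secondary, purely bookkeeping point is the transient before the first collision in each coupled pair: one must check that over the initial time interval of length $\Delta t_{\ell-1}$ the shared initial velocity contributes a position increment that cancels between the two levels (or is at worst $\mathcal{O}(\Delta t_\ell)$), so that it does not degrade the rate. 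The cross-covariance between the Brownian and the transport components of the position difference is harmless, since Cauchy--Schwarz absorbs it into the constant and only the exponent $\beta$ is needed for Theorem~\ref{thm:giles}.
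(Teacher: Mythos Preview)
Your proposal is correct and follows essentially the same route as the paper's proof: bound the variance by the second moment, apply the Lipschitz condition, use Lemmas~\ref{lem:brownian_expectation}, \ref{lem:transport_expectation}, \ref{lem:velocity_expectation} to identify second moments with variances, and then invoke the $\mathcal{O}(\Delta t_\ell)$ Maclaurin expansions of Lemmas~\ref{lem:brownian_variance}, \ref{lem:transport_variance}, \ref{lem:velocity_variance} before translating $\Delta t_\ell = \Delta t_0 M^{-\ell}$ into the stated geometric bound. The only cosmetic difference is that the paper expands $(K_x|X|+K_v|V|)^2$ exactly and controls the cross term $2K_xK_v\,\E[|X||V|]$ via Cauchy--Schwarz, whereas you use $(a+b)^2\le 2a^2+2b^2$ twice (once for $X$ vs.\ $V$, once for the Brownian vs.\ transport split of $X$); this changes only constants, not the exponent~$\beta$.
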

\begin{proof} By using the Lipschitz property and Lemmas \ref{lem:brownian_expectation}, \ref{lem:transport_expectation} and \ref{lem:velocity_expectation} we compute the following bound on the variance of the difference estimators:
\begin{align}
\V\left[\hat{F}_\ell - F_{\ell-1}\right]&\leq \E\left[\left(\hat{F}_\ell - F_{\ell-1}\right)^2\right]\\
&\leq \E\left[\left(K_x \left| X_{p,\Delta t_\ell}^{n,m}- X_{p,\Delta t_{\ell-1}}^{n}\right| + K_v \left|V_{p,\Delta t_\ell}^{n,m}- V_{p,\Delta t_{\ell-1}}^{n}\right|\right)^2\right]\\
&= K_x^2\E\left[\left(X_{p,\Delta t_\ell}^{n,m}- X_{p,\Delta t_{\ell-1}}^{n}\right)^2\right] + K_v^2\E\left[\left(V_{p,\Delta t_\ell}^{n,m}- V_{p,\Delta t_{\ell-1}}^{n}\right)^2\right] \\
&\quad + 2K_xK_v\E\left[\left|\left(X_{p,\Delta t_\ell}^{n,m}- X_{p,\Delta t_{\ell-1}}^{n}\right)\left(V_{p,\Delta t_\ell}^{n,m}- V_{p,\Delta t_{\ell-1}}^{n}\right)\right|\right]\\
&\leq K_x^2\V\left[X_{p,\Delta t_\ell}^{n,m}- X_{p,\Delta t_{\ell-1}}^{n}\right] + K_v^2\V\left[V_{p,\Delta t_\ell}^{n,m}- V_{p,\Delta t_{\ell-1}}^{n}\right] \\
&\quad + 2K_xK_v\sqrt{\V\left[X_{p,\Delta t_\ell}^{n,m}- X_{p,\Delta t_{\ell-1}}^{n}\right]\V\left[V_{p,\Delta t_\ell}^{n,m}- V_{p,\Delta t_{\ell-1}}^{n}\right]}.
\end{align}
As the linear term is the first nonzero term in \eqref{eq:maclaurin_brownian}, \eqref{eq:maclaurin_transport} and \eqref{eq:maclaurin_velocity} we write 
\begin{equation}
\V\left[\hat{F}_\ell - F\right] \leq c^\prime_2 M^{-\ell} = c^\prime_2 2^{-\log_2(M)\ell},
\end{equation}
meaning there exists an upper bound $c_2 2^{-\log_2(M)\ell}$, once $\ell$ is sufficiently large.
\end{proof}

Third, we verify the rate of increasing cost (Theorem \ref{thm:giles}, Assumption 4).
\begin{lemma}
\label{lem:cost}
For a sequence of difference estimators $\ell = 0,\dots,\infty$, with the time step sizes following $\Delta t_{\ell-1} = M\Delta t_\ell$, correlated as by algorithm \ref{alg:correlation}, the cost per sample $C_\ell$ decreases as $C_\ell \leq c_3 2^{\gamma\ell}$, with $\gamma = \log_2(M)$ and $c_3$ constant, for all $\ell > 0$.
\end{lemma}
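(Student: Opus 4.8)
The plan is to bound the cost by directly counting operations, using the refinement relation $\Delta t_{\ell-1} = M\Delta t_\ell$. Fix the horizon $t^*$ and write $\Delta t_0$ for the coarsest time step, so that $\Delta t_\ell = \Delta t_0 M^{-\ell}$. Generating one level-$\ell$ sample means running the coupled pair \eqref{eq:coupled_simulation}: a fine trajectory with $N_\ell = t^*/\Delta t_\ell = (t^*/\Delta t_0)M^\ell$ time steps and a coarse trajectory with $N_{\ell-1} = N_\ell/M$ time steps.

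First I would observe that a single time step of the asymptotic-preserving scheme \eqref{eq:transport}--\eqref{eq:collision} has cost $\mathcal{O}(1)$, independent of $\ell$: it requires one Gaussian draw $\xi$, one uniform draw to test \eqref{eq:collisioncondition}, at most one velocity re-sampling from $\mathcal{M}_{\Delta t}$, and a bounded number of arithmetic operations to update $(X,V)$. Next I would account for the overhead introduced by Algorithm~\ref{alg:correlation}: per coarse step it performs the $M$ fine sub-steps ($\mathcal{O}(M)$ work), then forms $\xi^n_{p,\ell-1}=\frac{1}{\sqrt M}\sum_m \xi^{n,m}_{p,\ell}$ and $\PT^n_{p,\ell-1}=(\max_m \PT^{n,m}_{p,\ell})^M$ (each $\mathcal{O}(M)$), followed by $\mathcal{O}(1)$ work to select the coarse velocity. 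Hence the total cost of one level-$\ell$ sample is deterministically bounded by $c^\prime M N_{\ell-1} = c^\prime N_\ell$ for some constant $c^\prime$ independent of $\ell$, and therefore the expected cost satisfies $C_\ell \le c^\prime N_\ell$.

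Combining the two pieces gives $C_\ell \le c^\prime N_\ell = c^\prime(t^*/\Delta t_0)M^\ell = c_3\, 2^{\log_2(M)\ell} = c_3\, 2^{\gamma\ell}$ with $\gamma=\log_2 M$ and $c_3 = c^\prime t^*/\Delta t_0$, which is the claim. There is no genuine obstacle here; the only point needing a moment's care is checking that the $\mathcal{O}(M)$-per-coarse-step coupling overhead does not inflate the order, which it does not, since there are only $N_{\ell-1}=N_\ell/M$ coarse steps. The estimate is, in essence, just a restatement of the refinement relation $\Delta t_{\ell-1}=M\Delta t_\ell$ together with the $\mathcal{O}(1)$ cost per elementary time step.
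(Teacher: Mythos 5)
Your argument is correct and follows essentially the same route as the paper: both count the $N_\ell + N_{\ell-1} = (M+1)M^{\ell-1}N_0$ time steps of the coupled pair, note that each step (including the coupling overhead of Algorithm~\ref{alg:correlation}) costs a bounded amount, and conclude $C_\ell \leq c_3 M^\ell = c_3 2^{\log_2(M)\ell}$. Your explicit accounting of the per-step and coupling costs is a slightly more careful version of the same computation.
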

\begin{proof}
The number of simulation steps $N_\ell$ needed for a simulation at level $\ell$ is $N_0 M^\ell$, meaning that a difference estimator at level $\ell$ costs $(M+1)M^{\ell-1}$ times that of a single simulation at level 0. This means that
\begin{equation}
C_\ell \leq c_3 M^{\ell} = c_3 2^{\log_2(M)\ell}
\end{equation}
for all $l$.
\end{proof}

Finally, we combine Theorem \ref{thm:giles} and Lemmas \ref{lem:expected}--\ref{lem:cost} to prove the convergence rate of our scheme.
\begin{theorem}
\label{thm:convergence}
Given $F(x,v)$, Lipschitz in position and velocity, and a sequence of approximations $\ell = 0,\dots,\infty$, coupled as described in algorithm \ref{alg:correlation} with time steps $\Delta t_{\ell-1} = M\Delta t_\ell$. The multilevel Monte Carlo method, applied to this sequence of approximations algorithm, converges with a mean square error $\E (\hat{Y} - \E\left[F\right])^2 < \mse^2$ and a computational complexity $C$ bounded by $\E\left[C\right]\leq c_4 \mse^{-2}(\log \mse)^2$, for a given constant $c_4$ and a sufficiently small $\mse > 0$.
\end{theorem}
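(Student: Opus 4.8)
The plan is to invoke Theorem~\ref{thm:giles} directly, feeding it the three rates established in Lemmas~\ref{lem:expected}, \ref{lem:variance} and \ref{lem:cost}, which give $\alpha = \beta = \gamma = \log_2(M)$ together with constants $c_1$, $c_2$, $c_3$. First I would check the structural hypotheses of Theorem~\ref{thm:giles}. The requirement $\alpha \geq \tfrac12\min(\beta,\gamma)$ reduces to $\alpha \geq \tfrac12\alpha$, which holds since $\alpha = \log_2(M) \geq 0$. Assumption~1 (the estimator-notation / telescoping identity) holds by construction of \eqref{eq:MC0estimator}--\eqref{eq:MClestimator}, since the correlated sampling in Algorithm~\ref{alg:correlation} is designed to leave the marginal distribution of each coarse-level increment unchanged (as argued in Section~\ref{sec:correlation}); hence $\hat{Y}_\ell$ is an unbiased estimator of $\E[\hat F_\ell - \hat F_{\ell-1}]$ for $\ell>0$ and of $\E[\hat F_0]$ for $\ell = 0$, and \eqref{eq:telescopic} telescopes to an unbiased estimator of $\E[\hat F_L]$.

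Second, I would address the fact that Lemmas~\ref{lem:expected} and \ref{lem:variance} only guarantee the bias and variance bounds for $\ell \geq L_1$, respectively $\ell \geq L_2$, with $L_1, L_2$ finite and independent of $\mse$. This is harmless: since there are only finitely many ``small'' levels and each contributes a finite mean, variance and cost, one may either re-index the hierarchy so that the new level $0$ is the old level $\max(L_1, L_2)$ (enlarging $c_3$ to absorb the cost of the merged coarse levels) or simply enlarge the constants $c_1, c_2, c_3$ so that Assumptions~2--4 of Theorem~\ref{thm:giles} hold for all $\ell \geq 0$. Either way, the hypotheses of Theorem~\ref{thm:giles} are satisfied with some valid triple of constants.

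Third, since $\beta = \gamma = \log_2(M)$ we land in the critical case of Theorem~\ref{thm:giles}. The theorem then furnishes a constant $c_4$ such that for any $\mse < e^{-1}$ there exist a number of levels $L$ and sample counts $\{N_\ell\}$ for which the multilevel estimator \eqref{eq:telescopic} satisfies $\E(\hat Y - \E[F])^2 < \mse^2$ with $\E[C] \leq c_4\,\mse^{-2}\log^2\mse$ (here the $\log^2\mse$ factor reflects that $L = \mathcal{O}(\log(1/\mse)/\alpha)$ levels are needed and the work is spread roughly evenly across them). This is precisely the asserted bound, completing the proof.

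I expect the only genuine subtlety to be the bookkeeping in the second step --- making explicit that the ``for $\ell$ sufficiently large'' qualifiers in the lemmas, and the fact that these lemmas bound increment differences rather than the quantity-of-interest differences directly, do not affect the asymptotic complexity --- together with being careful that Assumption~1 of Theorem~\ref{thm:giles} really does hold, i.e., that the coupling of Algorithm~\ref{alg:correlation} does not perturb the coarse marginals and hence introduces no bias into the telescoping sum. Everything else is a direct substitution into an existing theorem and requires no new estimates.
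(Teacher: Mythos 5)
Your proposal is correct and follows essentially the same route as the paper: insert Lemmas~\ref{lem:expected}, \ref{lem:variance} and \ref{lem:cost} into Theorem~\ref{thm:giles} with $\alpha=\beta=\gamma=\log_2(M)$ and read off the $\beta=\gamma$ case. Your explicit handling of the ``for $\ell$ sufficiently large'' qualifiers (re-indexing or enlarging constants) is a slightly more careful version of the paper's one-line remark that a small $\mse$ forces $L\geq\max(L_1,L_2)$, but it is the same argument in substance.
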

\begin{proof}
As $E$ constrains the bias which in turn constrains the finest level time step size $\Delta t_L$, a sufficiently small $E$ will enforce that  the number of levels $L\geq\max(L_1,L_2)$. The proof then follows by insertion of Lemmas \ref{lem:expected}, \ref{lem:variance} and \ref{lem:cost} into Theorem \ref{thm:giles}.
\end{proof}

\section{Selecting a level strategy}\label{sec:experiments}

In this section, we combine analytical and numerical results to determine a strategy for selecting levels when using our asymptotic-preserving multilevel scheme. The goal is to minimize the total simulation cost. In Section~\ref{sec:bias_variance} we first study the the bias and variance structure of the multilevel scheme in function of the simulation time step size $\Delta t_\ell$. Based on these insights, we propose two level strategies in Section \ref{sec:level_strategy}, which are then compared in terms of computational cost.

\subsection{Bias and variance structure}
\label{sec:bias_variance}

Theorem~\ref{thm:convergence} proves that the scheme converges asymptotically as $\Delta t_\ell$ decreases, for general values of $\epsilon$, $M$ and $t^*$, but does not give us the full picture for a larger range of values in $\Delta t_\ell$. To get a more complete picture, we consider the bias and variance of the difference estimators for a simple quantity of interest for different values of $\Delta t_\ell$. To this end we fix $t^*=5$ and $M=2$. At level $\ell=0$, we set $\Delta t_0 = 2.5$. At finer levels ($\ell \ge 1$) we set $\Delta t_\ell=\Delta t_{\ell-1}/M=\Delta t_0/M^\ell$. We fix the number of samples per difference estimator at 100~000. For a selection of values of $\epsilon$, we calculate the expected value and variance of the individual samples $\hat{F}_\ell$ and difference estimators $\hat{F}_\ell-\hat{F}_{\ell-1}$ of the squared particle position, as a function of $\Delta t_\ell$, for $1\le\ell$. We choose $\epsilon=10$ (Figure~\ref{fig:exp1eps10}), $\epsilon=1$ (Figure~\ref{fig:exp1eps1}), $\epsilon=0.1$ (Figure~\ref{fig:exp1eps01}) and $\epsilon=0.01$ (Figure~\ref{fig:exp1eps001}). We also plot the analytical bound on the variance given by \eqref{eq:variance_Brownian_sum} and \eqref{eq:global_var}, where we use the respective $K_x$-values 1.5, 5, 8 and 8, based on visually comparing results, and take $K_v = 0$, as the QoI is independent of the velocity.

\setlength\figurewidth{0.5\figurewidth}
\setlength\figureheight{\figurewidth}
\begin{figure} 
\begin{tikzpicture}

\definecolor{color0}{rgb}{0.12156862745098,0.466666666666667,0.705882352941177}
\definecolor{color1}{rgb}{1,0.498039215686275,0.0549019607843137}

\begin{axis}[
title={},
xlabel={Fine time step size},
ylabel={},
xmin=7.62939453125e-05, xmax=2.5,
ymin=10e-06, ymax=1,
xmode=log,
ymode=log,
max space between ticks=20,
xlabel near ticks,
axis x line*=top,
width=\figurewidth,
height=\figureheight,
tick align=outside,
x grid style={white!69.01960784313725!white},
y grid style={white!69.01960784313725!white},
x dir=reverse,
legend entries={{$\hat{F}_\ell$},{$\hat{F}_\ell - \hat{F}_{\ell-1}$}},
legend cell align={left},
legend style={at={(0.03,0.03)}, anchor=south west, draw=white!80.0!black, nodes={scale=0.8, transform shape}}
]
\addlegendimage{color0, mark=square*, mark size=1}
\addlegendimage{color1, mark=*, mark size=1}
\addplot [semithick, color0, mark=square*, mark size=1, mark options={solid}]
table {%
1.25 0.363268097802
0.625 0.305778636046
0.3125 0.276536394545
0.15625 0.260334031778
0.078125 0.252800667519
0.0390625 0.249472673742
0.01953125 0.247726393937
0.009765625 0.246757812006
0.0048828125 0.246494867005
0.00244140625 0.246195784446
0.001220703125 0.246046872963
0.0006103515625 0.245779917797
0.00030517578125 0.245981342386
0.000152587890625 0.246073464533
};
\addplot [semithick, color1, mark=*, mark size=1, mark options={solid}]
table {%
1.25 0.114791944835
0.625 0.0587506129371
0.3125 0.0299599189201
0.15625 0.0146795430835
0.078125 0.00728449994716
0.0390625 0.00368790241765
0.01953125 0.00187453971133
0.009765625 0.000858235227209
0.0048828125 0.000492204481853
0.00244140625 0.000237160276921
0.001220703125 0.000123324533482
0.0006103515625 3.97741282914e-05
0.00030517578125 2.89664580785e-05
0.000152587890625 1.53182609085e-05
};

\logLogSlopeTriangle{0.62}{-0.18}{0.1}{1}{black}; 

\end{axis}

\begin{axis}[
title={},
xlabel={Level},
ylabel={Mean},
xmin=0, xmax=15,
ymin=10e-06, ymax=1,
ymode=log,max space between ticks=20,
xlabel near ticks,
width=\figurewidth,
height=\figureheight,
tick align=outside,
x grid style={white!69.01960784313725!black},
y grid style={white!69.01960784313725!black},
]
\addplot [semithick, color1, mark=*, mark size=1, mark options={solid}]
table {%
1 0.114791944835
2 0.0587506129371
3 0.0299599189201
4 0.0146795430835
5 0.00728449994716
6 0.00368790241765
7 0.00187453971133
8 0.000858235227209
9 0.000492204481853
10 0.000237160276921
11 0.000123324533482
12 3.97741282914e-05
13 2.89664580785e-05
14 1.53182609085e-05
};

\end{axis}

\end{tikzpicture}
\begin{tikzpicture}

\definecolor{color0}{rgb}{0.12156862745098,0.466666666666667,0.705882352941177}
\definecolor{color1}{rgb}{1,0.498039215686275,0.0549019607843137}
\definecolor{color2}{rgb}{0.01,0.8,0.4}

\begin{axis}[
title={},
xlabel=Fine time step size,
ylabel={},
xmin=7.62939453125e-05, xmax=2.5,
ymin=1.5e-06, ymax=0.5,
xmode=log,
ymode=log,max space between ticks=20,
xlabel near ticks,
axis x line*=top,
width=\figurewidth,
height=\figureheight,
tick align=outside,
x grid style={white!69.01960784313725!white},
y grid style={white!69.01960784313725!white},
x dir=reverse,
legend entries={{$\hat{F}_\ell$},{$\hat{F}_\ell - \hat{F}_{\ell-1}$},Analytic},
legend cell align={left},
legend style={at={(0.03,0.03)}, anchor=south west, draw=white!80.0!black, nodes={scale=0.8, transform shape}}
]
\addlegendimage{color0, mark=square*, mark size=1}
\addlegendimage{color1, mark=*, mark size=1}
\addlegendimage{color2, mark=triangle*, mark size=1.5}
\addplot [semithick, color0, mark=square*, mark size=1, mark options={solid}]
table {%
1.25 0.148795639866
0.625 0.0687615177621
0.3125 0.0334260928904
0.15625 0.0165664419948
0.078125 0.00859430831669
0.0390625 0.00469210750172
0.01953125 0.00273843967136
0.009765625 0.00174214666174
0.0048828125 0.00126982397042
0.00244140625 0.00102793741574
0.001220703125 0.000922676778289
0.0006103515625 0.000893435765921
0.00030517578125 0.000823147506966
0.000152587890625 0.000783137241272
};
\addplot [semithick, color1, mark=*, mark size=1, mark options={solid}]
table {%
1.25 0.0473493329276
0.625 0.0173490282878
0.3125 0.00706448960306
0.15625 0.003064757657
0.078125 0.00141458967503
0.0390625 0.000687260321554
0.01953125 0.000335842964853
0.009765625 0.000166647955188
0.0048828125 8.34276476792e-05
0.00244140625 4.09447764522e-05
0.001220703125 2.06137230656e-05
0.0006103515625 1.02678456546e-05
0.00030517578125 5.15841712077e-06
0.000152587890625 2.57959641736e-06
};
\addplot [semithick, color2, mark=triangle*, mark size=1.5, mark options={solid}]
  table[row sep=crcr]{%
1.25	0.0538900385148672\\
0.625	0.0272634058948819\\
0.3125	0.0137119408325307\\
0.15625	0.00687610837975524\\
0.078125	0.00344309838031575\\
0.0390625	0.00172281141258661\\
0.01953125	0.000861721369377649\\
0.009765625	0.000430940514200731\\
0.0048828125	0.000215489726475273\\
0.00244140625	0.000107751137021779\\
0.001220703125	5.38708718395898e-05\\
0.0006103515625	2.73930018094598e-05\\
0.00030517578125	1.30206535663351e-05\\
0.000152587890625	7.89447165447408e-06\\
};

\logLogSlopeTriangle{0.62}{-0.2}{0.1}{1}{black}; 

\end{axis}

\begin{axis}[
title={},
xlabel={Level},
ylabel={Variance},
xmin=0, xmax=15,
ymin=1.5e-06, ymax=0.5,
ymode=log,max space between ticks=20,
xlabel near ticks,
width=\figurewidth,
height=\figureheight,
tick align=outside,
x grid style={white!69.01960784313725!black},
y grid style={white!69.01960784313725!black},
]
\addplot [semithick, color1, mark=*, mark size=1, mark options={solid}]
table {%
1 0.0473493329276
2 0.0173490282878
3 0.00706448960306
4 0.003064757657
5 0.00141458967503
6 0.000687260321554
7 0.000335842964853
8 0.000166647955188
9 8.34276476792e-05
10 4.09447764522e-05
11 2.06137230656e-05
12 1.02678456546e-05
13 5.15841712077e-06
14 2.57959641736e-06
};

\end{axis}

\end{tikzpicture}
\caption{\label{fig:exp1eps10}Mean and variance of the squared particle position for $\epsilon=10$.}
\centering
\end{figure}
\begin{figure}
\begin{tikzpicture}

\definecolor{color0}{rgb}{0.12156862745098,0.466666666666667,0.705882352941177}
\definecolor{color1}{rgb}{1,0.498039215686275,0.0549019607843137}

\begin{axis}[
title={},
xlabel=Fine time step size,
ylabel={},
xmin=1.9073486328125e-05, xmax=2.5,
ymin=4e-06, ymax=20,
xmode=log,
ymode=log,max space between ticks=20,
max space between ticks=20,
xlabel near ticks,
axis x line*=top,
width=\figurewidth,
height=\figureheight,
tick align=outside,
x grid style={white!69.01960784313725!white},
y grid style={white!69.01960784313725!white},
x dir=reverse,
legend entries={{$\hat{F}_\ell$},{$\hat{F}_\ell - \hat{F}_{\ell-1}$}},
legend cell align={left},
legend style={at={(0.03,0.03)}, anchor=south west, draw=white!80.0!black, nodes={scale=0.8, transform shape}}
]
\addlegendimage{mark=square*, color0, mark size=1}
\addlegendimage{mark=*, color1, mark size=1}
\addplot [semithick, color0, mark=square*, mark size=1, mark options={solid}]
table {%
1.25 7.94001533677
0.625 7.6164871517
0.3125 7.6054460073
0.15625 7.73583949741
0.078125 7.80963650549
0.0390625 7.87716329441
0.01953125 7.94459045039
0.009765625 7.9576559076
0.0048828125 7.98496993476
0.00244140625 8.03304655376
0.001220703125 8.00269556914
0.0006103515625 7.98105317971
0.00030517578125 8.00154231294
0.000152587890625 7.98664204267
7.62939453125e-05 7.9468871424
3.81469726562e-05 8.03647876362
};
\addplot [semithick, color1, mark=*, mark size=1, mark options={solid}]
table {%
1.25 0.575736069864
0.625 0.282648504262
0.3125 0.0269666676531
0.15625 0.115714415131
0.078125 0.131274285832
0.0390625 0.0971562951765
0.01953125 0.0520223085125
0.009765625 0.0247565573833
0.0048828125 0.0107402011465
0.00244140625 0.00435404092613
0.001220703125 0.00418052413651
0.0006103515625 0.000720738464582
0.00030517578125 0.00021838420448
0.000152587890625 0.000497207034614
7.62939453125e-05 0.000245464152265
3.81469726562e-05 7.50383876284e-06
};

\logLogSlopeTriangle{0.62}{-0.25}{0.1}{1}{black}; 

\end{axis}

\begin{axis}[
title={},
xlabel={Level},
ylabel={Mean},
xmin=0, xmax=17,
ymin=4e-06, ymax=20,
ymode=log,max space between ticks=20,
max space between ticks=20,
xlabel near ticks,
width=\figurewidth,
height=\figureheight,
tick align=outside,
x grid style={white!69.01960784313725!black},
y grid style={white!69.01960784313725!black},
]
\addplot [semithick, color1, mark=*, mark size=1, mark options={solid}]
table {%
1 0.575736069864
2 0.282648504262
3 0.0269666676531
4 0.115714415131
5 0.131274285832
6 0.0971562951765
7 0.0520223085125
8 0.0247565573833
9 0.0107402011465
10 0.00435404092613
11 0.00418052413651
12 0.000720738464582
13 0.00021838420448
14 0.000497207034614
15 0.000245464152265
16 7.50383876284e-06
};
\end{axis}

\end{tikzpicture}
\begin{tikzpicture}

\definecolor{color0}{rgb}{0.12156862745098,0.466666666666667,0.705882352941177}
\definecolor{color1}{rgb}{1,0.498039215686275,0.0549019607843137}
\definecolor{color2}{rgb}{0.01,0.8,0.4}

\begin{axis}[
title={},
xlabel=Fine time step size,
ylabel={},
xmin=1.9073486328125e-05, xmax=2.5,
ymin=0.00501889080511248, ymax=252.141284796777,
xmode=log,
ymode=log,
max space between ticks=20,
xlabel near ticks,
axis x line*=top,
width=\figurewidth,
height=\figureheight,
tick align=outside,
x grid style={white!69.01960784313725!white},
y grid style={white!69.01960784313725!white},
x dir=reverse,
legend cell align={left},
x dir=reverse,
legend entries={{$\hat{F}_\ell$},{$\hat{F}_\ell - \hat{F}_{\ell-1}$},Analytic},
legend style={at={(0.03,0.03)}, anchor=south west, draw=white!80.0!black, nodes={scale=0.8, transform shape}}
]
\addlegendimage{mark=square*, color0, mark size=1}
\addlegendimage{mark=*, color1, mark size=1}
\addlegendimage{color2, mark=triangle*, mark size=1.5}
\addplot [semithick, color0, mark=square*, mark size=1, mark options={solid}]
table {%
1.25 118.036111864
0.625 100.489903518
0.3125 86.9503554365
0.15625 77.7852146522
0.078125 72.4342294278
0.0390625 68.9299993428
0.01953125 68.4700458316
0.009765625 67.1105737256
0.0048828125 66.8604040883
0.00244140625 66.9648667806
0.001220703125 66.8639968781
0.0006103515625 66.3854315928
0.00030517578125 66.4769142171
0.000152587890625 66.2717249777
7.62939453125e-05 66.0741211735
3.81469726562e-05 66.636515661
};
\addplot [semithick, color1, mark=*, mark size=1, mark options={solid}]
table {%
1.25 22.2781647852
0.625 22.6700174764
0.3125 18.7830829149
0.15625 12.9740475168
0.078125 8.13383212438
0.0390625 4.65351998551
0.01953125 2.49589667182
0.009765625 1.28159831227
0.0048828125 0.65987461745
0.00244140625 0.33611507775
0.001220703125 0.15027075302
0.0006103515625 0.101897965864
0.00030517578125 0.0372707063373
0.000152587890625 0.0323606238426
7.62939453125e-05 0.00845314835635
3.81469726562e-05 0.0108277783303
};
\addplot [semithick, color2, mark=triangle*, mark size=1.5, mark options={solid}]
  table[row sep=crcr]{%
1.25	24.746596289089\\
0.625	30.0564395581912\\
0.3125	28.8436460831115\\
0.15625	22.1077192629029\\
0.078125	14.2352570286122\\
0.0390625	8.1816671719111\\
0.01953125	4.40237074624432\\
0.009765625	2.28577443489076\\
0.0048828125	1.16494727743271\\
0.00244140625	0.58810783936222\\
0.001220703125	0.295477696361048\\
0.0006103515625	0.148096727113657\\
0.00030517578125	0.0741381188160735\\
0.000152587890625	0.0370916208358175\\
7.62939453125e-05	0.018551396003486\\
3.814697265625e-05	0.00927613982527274\\
};

\logLogSlopeTriangle{0.62}{-0.19}{0.1}{1}{black}; 

\end{axis}

\begin{axis}[
title={},
xlabel={Level},
ylabel={Variance},
xmin=0, xmax=17,
ymin=0.00501889080511248, ymax=252.141284796777,
ymode=log,
max space between ticks=20,
xlabel near ticks,
width=\figurewidth,
height=\figureheight,
tick align=outside,
x grid style={white!69.01960784313725!black},
y grid style={white!69.01960784313725!black},
]
\addplot [semithick, color1, mark=*, mark size=1, mark options={solid}]
table {%
1 22.2781647852
2 22.6700174764
3 18.7830829149
4 12.9740475168
5 8.13383212438
6 4.65351998551
7 2.49589667182
8 1.28159831227
9 0.65987461745
10 0.33611507775
11 0.15027075302
12 0.101897965864
13 0.0372707063373
14 0.0323606238426
15 0.00845314835635
16 0.0108277783303
};

\end{axis}

\end{tikzpicture}
\caption{\label{fig:exp1eps1} Mean and variance of the squared particle position for $\epsilon=1$.}
\centering
\end{figure}
\begin{figure}
\begin{tikzpicture}

\definecolor{color0}{rgb}{0.12156862745098,0.466666666666667,0.705882352941177}
\definecolor{color1}{rgb}{1,0.498039215686275,0.0549019607843137}

\begin{axis}[
title={},
xlabel=Fine time step size,
ylabel={},
xmin=1.9073486328125e-05, xmax=2.5,
ymin=0.003, ymax=13.5970415557483,
xmode=log,
ymode=log,max space between ticks=20,
xlabel near ticks,
axis x line*=top,
width=\figurewidth,
height=\figureheight,
tick align=outside,
x grid style={white!69.01960784313725!white},
y grid style={white!69.01960784313725!white},
x dir=reverse,
legend entries={{$\hat{F}_\ell$},{$\hat{F}_\ell - \hat{F}_{\ell-1}$}},
legend cell align={left},
legend style={at={(0.49,0.03)}, anchor=south, draw=white!80.0!black, nodes={scale=0.8, transform shape}}
]
\addlegendimage{mark=square*, color0, mark size=1}
\addlegendimage{mark=*, color1, mark size=1}
\addplot [semithick, color0, mark=square*, mark size=1, mark options={solid}]
table {%
1.25 9.96840748524
0.625 9.92701209234
0.3125 9.84183035662
0.15625 9.68277993855
0.078125 9.41395321451
0.0390625 9.1542241289
0.01953125 8.91059774908
0.009765625 8.68166940737
0.0048828125 8.87359245506
0.00244140625 9.23406380224
0.001220703125 9.47984909443
0.0006103515625 9.80526703079
0.00030517578125 9.83856655483
0.000152587890625 9.93754488952
7.62939453125e-05 9.94776834595
3.81469726562e-05 10.0143362108
};
\addplot [semithick, color1, mark=*, mark size=1, mark options={solid}]
table {%
1.25 0.0220890494684
0.625 0.0379730280919
0.3125 0.0762245498094
0.15625 0.139281702302
0.078125 0.229981759774
0.0390625 0.313906904703
0.01953125 0.312839506265
0.009765625 0.14871536068
0.0048828125 0.147006851952
0.00244140625 0.364501735716
0.001220703125 0.304687669943
0.0006103515625 0.242194718957
0.00030517578125 0.131718838343
0.000152587890625 0.057894945633
7.62939453125e-05 0.0313862876206
3.81469726562e-05 0.0276631699499
};

\logLogSlopeTriangleIncrease{0.22}{0.15}{0.2}{1}{black}; 
\logLogSlopeTriangle{0.76}{-0.15}{0.2}{1}{black}; 

\end{axis}

\begin{axis}[
title={},
xlabel={Level},
ylabel={Mean},
xmin=0, xmax=17,
ymin=0.003, ymax=13.5970415557483,
ymode=log,max space between ticks=20,
xlabel near ticks,
width=\figurewidth,
height=\figureheight,
tick align=outside,
x grid style={white!69.01960784313725!black},
y grid style={white!69.01960784313725!black},
]
\addplot [semithick, color1, mark=*, mark size=1, mark options={solid}]
table {%
1 0.0220890494684
2 0.0379730280919
3 0.0762245498094
4 0.139281702302
5 0.229981759774
6 0.313906904703
7 0.312839506265
8 0.14871536068
9 0.147006851952
10 0.364501735716
11 0.304687669943
12 0.242194718957
13 0.131718838343
14 0.057894945633
15 0.0313862876206
16 0.0276631699499
};
\end{axis}

\end{tikzpicture}
\begin{tikzpicture}

\definecolor{color0}{rgb}{0.12156862745098,0.466666666666667,0.705882352941177}
\definecolor{color1}{rgb}{1,0.498039215686275,0.0549019607843137}
\definecolor{color2}{rgb}{0.01,0.8,0.4}

\begin{axis}[
title={},
xlabel=Fine time step size,
ylabel={},
xmin=1.9073486328125e-05, xmax=2.5,
ymin=0.12, ymax=317.866980175038,
xmode=log,
ymode=log,
max space between ticks=20,
xlabel near ticks,
axis x line*=top,
width=\figurewidth,
height=\figureheight,
tick align=outside,
x grid style={white!69.01960784313725!white},
y grid style={white!69.01960784313725!white},
x dir=reverse,
legend entries={{$\hat{F}_\ell$},{$\hat{F}_\ell - \hat{F}_{\ell-1}$},Analytic},
legend cell align={left},
legend style={at={(0.53,0.03)}, anchor=south, draw=white!80.0!black, nodes={scale=0.8, transform shape}}
]
\addlegendimage{mark=square*, color0, mark size=1}
\addlegendimage{mark=*, color1, mark size=1}
\addlegendimage{color2, mark=triangle*, mark size=1.5}
\addplot [semithick, color0, mark=square*, mark size=1, mark options={solid}]
table {%
1.25 200.599953961
0.625 197.602810507
0.3125 195.100976482
0.15625 188.13466271
0.078125 177.108466721
0.0390625 168.291019986
0.01953125 160.106371909
0.009765625 150.811056995
0.0048828125 156.82559135
0.00244140625 170.106705416
0.001220703125 177.42238752
0.0006103515625 192.008903765
0.00030517578125 191.599309484
0.000152587890625 193.104990471
7.62939453125e-05 194.436980328
3.81469726562e-05 200.665272706
};
\addplot [semithick, color1, mark=*, mark size=1, mark options={solid}]
table {%
1.25 0.79182562221
0.625 1.57527331776
0.3125 3.05934537554
0.15625 5.93665230953
0.078125 11.0592531206
0.0390625 19.1070602726
0.01953125 30.4314797644
0.009765625 41.055244543
0.0048828125 47.1504126074
0.00244140625 44.3055273615
0.001220703125 32.9694210477
0.0006103515625 21.7525565864
0.00030517578125 12.1576923066
0.000152587890625 6.66458096373
7.62939453125e-05 3.29932453829
3.81469726562e-05 1.74211974356
};
\addplot [semithick, color2, mark=triangle*, mark size=1.5, mark options={solid}]
  table[row sep=crcr]{%
1.25	1.27229574950238\\
0.625	2.52910597890772\\
0.3125	4.99600027305723\\
0.15625	9.74315728959438\\
0.078125	18.5126615897468\\
0.0390625	33.4421115996959\\
0.01953125	55.1650449965872\\
0.009765625	78.391653550056\\
0.0048828125	90.0084234745474\\
0.00244140625	80.9331949899323\\
0.001220703125	58.5047103219861\\
0.0006103515625	36.1364302508411\\
0.00030517578125	20.2509302206521\\
0.000152587890625	10.7449425039298\\
7.62939453125e-05	5.53785228302983\\
3.814697265625e-05	2.81167702735844\\
};

\logLogSlopeTriangleIncrease{0.26}{0.15}{0.28}{1}{black}; 
\logLogSlopeTriangle{0.80}{-0.15}{0.28}{1}{black}; 

\end{axis}

\begin{axis}[
title={},
xlabel={Level},
ylabel={Variance},
xmin=0, xmax=17,
ymin=0.12, ymax=317.866980175038,
ymode=log,max space between ticks=20,
xlabel near ticks,
width=\figurewidth,
height=\figureheight,
tick align=outside,
x grid style={white!69.01960784313725!black},
y grid style={white!69.01960784313725!black},
]
\addplot [semithick, color1, mark=*, mark size=1, mark options={solid}]
table {%
1 0.79182562221
2 1.57527331776
3 3.05934537554
4 5.93665230953
5 11.0592531206
6 19.1070602726
7 30.4314797644
8 41.055244543
9 47.1504126074
10 44.3055273615
11 32.9694210477
12 21.7525565864
13 12.1576923066
14 6.66458096373
15 3.29932453829
16 1.74211974356
};

\end{axis}

\end{tikzpicture}
\caption{\label{fig:exp1eps01} Mean and variance of the squared particle position for $\epsilon=0.1$.}
\centering
\end{figure}
\begin{figure}
\begin{tikzpicture}

\definecolor{color0}{rgb}{0.12156862745098,0.466666666666667,0.705882352941177}
\definecolor{color1}{rgb}{1,0.498039215686275,0.0549019607843137}

\begin{axis}[
title={},
xlabel=Fine time step size,
ylabel={},
xmin=1.9073486328125e-05, xmax=2.5,
ymin=7.54152663311099e-05, ymax=17.6700281591673,
xmode=log,
ymode=log,max space between ticks=20,
xlabel near ticks,
axis x line*=top,
width=\figurewidth,
height=\figureheight,
tick align=outside,
x grid style={white!69.01960784313725!white},
y grid style={white!69.01960784313725!white},
x dir=reverse,
legend entries={{$\hat{F}_\ell$},{$\hat{F}_\ell - \hat{F}_{\ell-1}$}},
legend style={at={(0.97,0.03)}, anchor=south east, draw=white!80.0!black, nodes={scale=0.8, transform shape}},
legend cell align={left}
]
\addlegendimage{mark=square*, color0, mark size=1}
\addlegendimage{mark=*, color1, mark size=1}
\addplot [semithick, color0, mark=square*, mark size=1, mark options={solid}]
table {%
1.25 10.0332914554
0.625 10.060206185
0.3125 9.98993990723
0.15625 10.0729441195
0.078125 9.9422042789
0.0390625 9.89146703197
0.01953125 9.98076221171
0.009765625 10.0333511082
0.0048828125 9.82068682864
0.00244140625 9.82179382673
0.001220703125 9.60603246538
0.0006103515625 9.39013738906
0.00030517578125 9.16276008958
0.000152587890625 8.86152059235
7.62939453125e-05 8.74415627272
3.81469726562e-05 8.99037781221
};
\addplot [semithick, color1, mark=*, mark size=1, mark options={solid}]
table {%
1.25 0.000132293981173
0.625 0.000590123376795
0.3125 0.000739905339119
0.15625 0.00165452038106
0.078125 0.00336643013131
0.0390625 0.00729403394273
0.01953125 0.0104704337608
0.009765625 0.0242198789932
0.0048828125 0.0455253154734
0.00244140625 0.0927604805724
0.001220703125 0.152236971983
0.0006103515625 0.258347036428
0.00030517578125 0.297942469482
0.000152587890625 0.265981327232
7.62939453125e-05 0.0555163237107
3.81469726562e-05 0.226589289455
};

\logLogSlopeTriangleIncrease{0.32}{0.2}{0.1}{1}{black}; 

\end{axis}

\begin{axis}[
title={},
xlabel={Level},
ylabel={Mean},
xmin=0, xmax=17,
ymin=7.54152663311099e-05, ymax=17.6700281591673,
ymode=log,
max space between ticks=20,
xlabel near ticks,
width=\figurewidth,
height=\figureheight,
tick align=outside,
x grid style={white!69.01960784313725!black},
y grid style={white!69.01960784313725!black},
]
\addplot [semithick, color1, mark=*, mark size=1, mark options={solid}]
table {%
1 0.000132293981173
2 0.000590123376795
3 0.000739905339119
4 0.00165452038106
5 0.00336643013131
6 0.00729403394273
7 0.0104704337608
8 0.0242198789932
9 0.0455253154734
10 0.0927604805724
11 0.152236971983
12 0.258347036428
13 0.297942469482
14 0.265981327232
15 0.0555163237107
16 0.226589289455
};

\end{axis}

\end{tikzpicture}
\begin{tikzpicture}

\definecolor{color0}{rgb}{0.12156862745098,0.466666666666667,0.705882352941177}
\definecolor{color1}{rgb}{1,0.498039215686275,0.0549019607843137}
\definecolor{color2}{rgb}{0.01,0.8,0.4}

\begin{axis}[
title={},
xlabel=Fine time step size,
ylabel={},
xmin=1.9073486328125e-05, xmax=2.5,
ymin=0.005, ymax=405.49470514225,
xmode=log,
ymode=log,max space between ticks=20,
xlabel near ticks,
axis x line*=top,
width=\figurewidth,
height=\figureheight,
tick align=outside,
x grid style={white!69.01960784313725!white},
y grid style={white!69.01960784313725!white},
x dir=reverse,
legend style={at={(0.97,0.03)}, anchor=south east, draw=white!80.0!black, nodes={scale=0.8, transform shape}},
legend cell align={left},
legend entries={{$\hat{F}_\ell$},{$\hat{F}_\ell - \hat{F}_{\ell-1}$},Analytic},
]
\addlegendimage{mark=square*, color0, mark size=1}
\addlegendimage{mark=*, color1, mark size=1}
\addlegendimage{color2, mark=triangle*, mark size=1.5}
\addplot [semithick, color0, mark=square*, mark size=1, mark options={solid}]
table {%
1.25 198.862274153
0.625 203.264847785
0.3125 196.820091658
0.15625 202.618168867
0.078125 196.896303242
0.0390625 195.790959567
0.01953125 200.101688317
0.009765625 201.094123104
0.0048828125 194.529087682
0.00244140625 191.928195123
0.001220703125 183.244126641
0.0006103515625 176.509720185
0.00030517578125 166.784253643
0.000152587890625 159.355932779
7.62939453125e-05 151.888375566
3.81469726562e-05 161.727824715
};
\addplot [semithick, color1, mark=*, mark size=1, mark options={solid}]
table {%
1.25 0.00801706694158
0.625 0.0159920064974
0.3125 0.0318247800558
0.15625 0.0642507795671
0.078125 0.126873502589
0.0390625 0.255852942328
0.01953125 0.513408155702
0.009765625 1.02186143721
0.0048828125 1.96156312125
0.00244140625 3.89792605992
0.001220703125 7.45081206185
0.0006103515625 13.4873310251
0.00030517578125 23.4039117139
0.000152587890625 35.1162175386
7.62939453125e-05 44.7572179481
3.81469726562e-05 47.9011176518
};
\addplot [semithick, color2, mark=triangle*, mark size=1.5, mark options={solid}]
  table[row sep=crcr]{%
1.25	0.0127992319748073\\
0.625	0.0255969277958151\\
0.3125	0.0511877103677839\\
0.15625	0.102350834991461\\
0.078125	0.204603288753395\\
0.0390625	0.408812755096231\\
0.01953125	0.816047974690823\\
0.009765625	1.62576990424785\\
0.0048828125	3.22613897400257\\
0.00244140625	6.35032132383497\\
0.001220703125	12.2949683064841\\
0.0006103515625	23.0286639759075\\
0.00030517578125	40.5096923092229\\
0.000152587890625	63.8689668881264\\
7.62939453125e-05	84.7148474070524\\
3.814697265625e-05	89.2373047051834\\
};

\logLogSlopeTriangleIncrease{0.35}{0.2}{0.1}{1}{black}; 

\end{axis}

\begin{axis}[
title={},
xlabel={Level},
ylabel={Variance},
xmin=0, xmax=17,
ymin=0.005, ymax=405.49470514225,
ymode=log,
max space between ticks=20,
xlabel near ticks,
width=\figurewidth,
height=\figureheight,
tick align=outside,
x grid style={white!69.01960784313725!black},
y grid style={white!69.01960784313725!black},
]
\addplot [semithick, color1, mark=*, mark size=1, mark options={solid}]
table {%
1 0.00801706694158
2 0.0159920064974
3 0.0318247800558
4 0.0642507795671
5 0.126873502589
6 0.255852942328
7 0.513408155702
8 1.02186143721
9 1.96156312125
10 3.89792605992
11 7.45081206185
12 13.4873310251
13 23.4039117139
14 35.1162175386
15 44.7572179481
16 47.9011176518
};

\end{axis}

\end{tikzpicture}
\caption{\label{fig:exp1eps001} Mean and variance of the squared particle position for $\epsilon=0.01$.}
\centering
\end{figure}
\setlength\figurewidth{2\figurewidth}

\textbf{The regime $\Delta t \ll \epsilon^2$.} In Figures \ref{fig:exp1eps10} through \ref{fig:exp1eps01}, we see that, as the level $\ell$ increases, the slopes of both the mean and variance curves for the differences approach an asymptotic limit $\mathcal{O}\left( \Delta t \right)$ for $\Delta t \ll \epsilon^2$. This observation matches the weak convergence order of the Euler-Maruyama scheme, used to simulate the model \eqref{eq:transport}--\eqref{eq:collision}, as well as the expected behavior from the time step dependent bias in the asymptotic-preserving model. This confirms the expected behavior from \eqref{eq:maclaurin_brownian} and \eqref{eq:maclaurin_transport} as well as Lemmas~\ref{lem:expected} and \ref{lem:variance}. In this regime, the existing theory for multilevel Monte Carlo methods can be applied, e.g., on the required number of samples per level and conditions for adding levels~\cite{Giles2015}.

\textbf{The regime $\Delta t \gg \epsilon^2$.} For time steps $\Delta t \gg \epsilon^2$, however, we see in Figures~\ref{fig:exp1eps01} and \ref{fig:exp1eps001} that both the mean and the variance curves increase geometrically in terms of increasing level. To explain this perhaps counterintuitive result, we will look at the limit of the modified Goldstein-Taylor model when $\Delta t$ tends to infinity. In this limit, the model \eqref{eq:GTmod} converges to the heat equation:
\begin{equation}
\label{mlmcapeq_GTheat}
\begin{dcases}
\partial_t f_+(x,t) = \partial_{xx} f_+(x,t)\\
\partial_t f_-(x,t) = \partial_{xx} f_-(x,t)
\end{dcases} \Rightarrow \partial_t \rho(x,t) = \partial_{xx} \rho(x,t).
\end{equation}
This means that taking increasingly larger time steps in \eqref{eq:GTmod} is equivalent to taking the limit $\epsilon \to 0$.
 This observation is precisely the asymptotic-preserving property of the particle scheme of Section~\ref{sec:ap_scheme}.

That the scheme approaches two different limiting models in these two limits can be seen most clearly in Figures~\ref{fig:exp1eps10} and~\ref{fig:exp1eps01}. In Figure~\ref{fig:exp1eps01}, the curves for the mean and variance of the differences $\hat{F}_\ell - \hat{F}_{\ell-1}$ (orange lines with circles) decrease for both small and large $\Delta t$, as the model converges to the two limits. In the right hand panel of Figure~\ref{fig:exp1eps10} we see that the variance of the individual simulations at level $\ell$ (blue line with squares) changes drastically as a function of $\Delta t_\ell$ in the region where it is of the same order of magnitude as $\epsilon^2$. This is caused by the approximated models for large and small $\Delta t$ having differences in behavior, which are significant enough to be observed when plotted. The scheme thus converges to different equations for the two limits in $\Delta t$. For small $\Delta t$, there is convergence to \eqref{eq:GT}. For large $\Delta t$, there is convergence to \eqref{eq:heat}. In practice, the size of $\Delta t$ is limited by the simulation time horizon, so it is not possible to get arbitrarily close to \eqref{eq:heat} by increasing the time step size.

\textbf{Connecting the two regimes.} Combining the observations from the two limits ($\epsilon$ tending to zero and $\Delta t_\ell$ tending to zero) in the time step size gives an intuitive interpretation to the multilevel Monte Carlo method in this setting: the method can be interpreted as correcting the result of a pure diffusion simulation by decreasing $\Delta t$ to get a good approximation of the transport-diffusion equation that describes the behavior for a given value of $\epsilon$. The peak of the variance of the differences lies near $\Delta t \approx \epsilon^2$. This makes sense, as this is the region where the model parameters $D_{\Delta t}$ and $\CV_{\Delta t}$ vary the most as a function of $\Delta t$. We also see a dip in the mean of the difference curves in the region of $\Delta t \approx \epsilon^2$. A full analysis of the behavior that occurs in the transition between the asymptotic regimes is left for future work.

\subsection{Performance and level placement strategy}
\label{sec:level_strategy}
In Section \ref{sec:bias_variance}, we experimentally verified the asymptotic convergence rates of the bias and variance in function of increasing level number that were used in the proof of Theorem \ref{thm:convergence}. In doing so, we observed an increasing mean and variance for the difference estimators in the region $\Delta t \gg \epsilon^2$, for $M=2$ and a number of values for $\epsilon$. In this regime, it makes little sense to include a full sequence of levels, as Theorem \ref{thm:giles} only claims a speedup over classical Monte Carlo in the case of decreasing variance. Levels in this region can be interpreted as producing bias estimators that are orders of magnitude smaller than the bias in the model which they are estimating, which means wasted computation.

That a full sequence of levels makes no sense for time step sizes larger than $\epsilon^2$, is therefore intuitively clear. However the question still remains as to what the best approach is to selecting levels. We consider two possible simulation strategies:
\begin{itemize}
\item \textbf{Strategy 1:} Geometric sequence, starting from $\epsilon^2$:
\begin{enumerate}
\item We generate an initial estimate of the quantity of interest at level zero, where we simulate to $t^*$ using $\Delta t_0 = \epsilon^2$.
\item We continue to generate a geometric sequence of levels until an acceptably low bias has been achieved, i.e., $\Delta t_l = \epsilon^2M^{-l}$ for $l>0$.
\end{enumerate}
\item \textbf{Strategy 2:} Additional inclusion of a single coarse level:
\begin{enumerate}
\item We generate an initial estimate of the quantity of interest at level zero, where we simulate to $t^*$ using $\Delta t_0 = t^*$.
\item At level 1 we perform correlated simulations to $t^*$ using $\Delta t_0 = t^*$ and $\Delta t_1 = \epsilon^2$.
\item We continue to generate a geometric sequence of levels until an acceptably low bias has been achieved, i.e., $\Delta t_l = \epsilon^2M^{1-l}$ for $l>1$.
\end{enumerate} 
\end{itemize}
In Appendix~\ref{sec:coarse_theoretical}, we apply multilevel theory to compare both strategies for some very simple quantities of interest. In the considered cases, Strategy 2 proves more efficient. The theoretical approach does not expand to general quantities of interest, however. In the following two subsections (Section~\ref{sec:geometric} for Strategy~1 and Section~\ref{mlmcapsec_coarsexp} for Strategy~2), we compare these strategies for the quantity of interest considered in Section~\ref{sec:bias_variance}. We choose to set $M=2$ and $\epsilon=0.1$, and reduce the time horizon to $t^*=0.5$. This gives us an expensive, but computationally feasible problem. The number of samples per level is derived using the formula~\cite{Giles2015}
\begin{equation}
\left\lceil 2 E^{-2} \sqrt{\frac{V_\ell}{C_\ell}}\left( \sum_{\ell=0}^L \sqrt{V_\ell C_\ell} \right) \right\rceil,
\end{equation}
with $\mse$ the desired bound on the root mean square error, $C_\ell$ the computational cost of the estimator at level $\ell$, and $V_\ell$ the estimated variance of the estimator at level $\ell$, i.e., $V_\ell = \mathbb{V}\left[\hat{F}_\ell - F_{\ell-1} \right]$, where we set $F_{-1} \equiv 0$. An initial estimate for $V_\ell$ is computed using 40, 500 and 1 000 initial samples for respective $\mse$-values 0.1, 0.01 and 0.001. The criteria for adding levels and determining convergence are as described in~\cite{Giles2015}. The cost per sample is determined relative to the cost of a trajectory simulated with $\Delta t = \epsilon^2$.

 The code for performing the numerical experiments can be found at \url{github.com/ELoevbak/APMLMC}, together with the data files containing the simulation results.
\subsubsection{Simulating a geometric sequence}
\label{sec:geometric}
 The results of the simulations with strategy 1 for $\mse$ values 0.1, 0.01 and 0.001 are given in Tables \ref{tab:simulationresults1} through \ref{tab:simulationresults3}. In these tables, we list the time step size $\Delta t_\ell$, number of samples $P_\ell$, variance of the fine simulations $\mathbb{V}\left[ \hat{F}_\ell \right]$, expected value $\mathbb{E}\left[ \hat{F}_\ell - F_{\ell-1} \right]$ and variance $V_\ell$ of the differences of simulations, estimated variance of the estimator $\mathbb{V}\left[\hat{Y}_\ell\right]$, cost per sample $C_\ell$  and level cost $P_\ell C_\ell$. The level $\ell$ estimator variance is estimated as
\begin{equation}
\mathbb{V}\left[ \hat{Y}_\ell \right] = \frac{V_\ell}{P_\ell}.
\end{equation}

{\setlength{\tabcolsep}{2pt}
\def\arraystretch{1.1}
\begin{table}
\caption{Computing the QoI with a geometric level sequence for $E=0.1$.\label{tab:simulationresults1}}
\centering
\makebox[\textwidth][c]{
\begin{tabular}{c | c | c | c | r | c | c | c || c}
Level & $\Delta t_\ell$ & $P_\ell$ & $\mathbb{V}\left[ \hat{F}_\ell \right]$ & \multicolumn{1}{c|}{$\mathbb{E}\left[ \hat{F}_\ell - F_{\ell-1} \right]$} & $V_\ell$ & $\mathbb{V}\left[\hat{Y}_\ell\right]$ & $C_\ell$ & $P_\ell C_\ell$ \\
\hline
0 & $1.00 \times 10^{-2}$ & 1 393 & 1.32 & $8.18 \times 10^{-1}$ & $1.32 \times 10^{0\phantom{-}}$ & $9.45 \times 10^{-4}$ & 1 & 1 393\\
1 & $5.00 \times 10^{-3}$ & 395 & 1.52 & $7.91 \times 10^{-3}$ & $3.58 \times 10^{-1}$ & $9.07 \times 10^{-4}$ & 3 & 1 185\\
2 & $2.50 \times 10^{-3}$ & 296 & 1.59 & $2.18 \times 10^{-2}$  & $4.82 \times 10^{-1}$ & $1.59 \times 10^{-3}$ & 6 & 1 776\\
3 & $1.25 \times 10^{-3}$ & 229 & 2.22 & $-1.48 \times 10^{-2}$  & $3.22 \times 10^{-1}$ & $1.41 \times 10^{-3}$ & 12 & 2 748\\
4 & $6.25 \times 10^{-4}$ & 40  & 1.70 & $1.57 \times 10^{-3}$ & $4.56 \times 10^{-2}$ & $1.14 \times 10^{-3}$ & 24 & 960\\
\hline
$\sum$ & \multicolumn{4}{c}{} & & $6.00 \times 10^{-3}$ & & 8 062
\end{tabular}
}
\end{table}

\begin{table}
\caption{Computing the QoI with a geometric level sequence for $E=0.01$.\label{tab:simulationresults2}}
\centering
\makebox[\textwidth][c]{
\begin{tabular}{c | c | c | c | r | c | c | c || c}
Level & $\Delta t_\ell$ & $P_\ell$ & $\mathbb{V}\left[ \hat{F}_\ell \right]$ & $\mathbb{E}\left[ \hat{F}_\ell - F_{\ell-1} \right]$ & $V_\ell$ & $\mathbb{V}\left[\hat{Y}_\ell\right]$ & $C_\ell$ & $P_\ell C_\ell$ \\
\hline
0 & $1.00 \times 10^{-2}$ & 453 182 & 1.47 & $8.66 \times 10^{-1}$ & $1.47 \times 10^{0\phantom{-}}$ & $3.24 \times 10^{-6}$ & 1   & 453 182\\
1 & $5.00 \times 10^{-3}$ & 142 675 & 1.48 & $1.11 \times 10^{-2}$ & $4.37 \times 10^{-1}$ & $3.06 \times 10^{-6}$ & 3   & 428 025\\
2 & $2.50 \times 10^{-3}$ &  96 682 & 1.55 & $2.82 \times 10^{-2}$ & $4.02 \times 10^{-1}$ & $4.16 \times 10^{-6}$ & 6   & 580 092\\
3 & $1.25 \times 10^{-3}$ & 59 418  & 1.66 & $3.28 \times 10^{-2}$ & $3.01 \times 10^{-1}$ & $5.07 \times 10^{-6}$ & 12  & 713 016\\
4 & $6.25 \times 10^{-4}$ & 33 973  & 1.73 & $2.18 \times 10^{-2}$ & $1.97 \times 10^{-1}$ & $5.80 \times 10^{-6}$ & 24  & 815 352\\
5 & $3.13 \times 10^{-4}$ & 18 484  & 1.82 & $7.10 \times 10^{-3}$  & $1.17 \times 10^{-1}$ & $6.30 \times 10^{-6}$ & 48  & 887 232\\
6 & $1.56 \times 10^{-4}$ & 9 249   & 1.88 & $5.76 \times 10^{-3}$  & $5.91 \times 10^{-2}$ & $6.39 \times 10^{-6}$ & 96  & 887 904\\
7 & $7.81 \times 10^{-5}$ & 4 503   & 1.89 & $7.70 \times 10^{-3}$ & $2.75 \times 10^{-2}$ & $6.11 \times 10^{-6}$ & 192 & 864 576\\
8 & $3.91 \times 10^{-5}$ & 2 523   & 1.69 & $-5.16 \times 10^{-4}$ & $1.13 \times 10^{-2}$ & $4.47 \times 10^{-6}$ & 384 & 968 832\\
9 & $1.95 \times 10^{-5}$ & 757   	& 1.65 & $2.93 \times 10^{-3}$ & $2.62 \times 10^{-3}$ & $3.46 \times 10^{-6}$ & 768 & 581 376\\
10 & $9.75 \times 10^{-6}$ & 500    & 2.04 & $2.08 \times 10^{-3}$ & $4.15 \times 10^{-3}$ & $8.29 \times 10^{-5}$ & 1 536 & 768 000\\
\hline
$\sum$ & \multicolumn{4}{c}{} & & $5.64 \times 10^{-5}$ & & 7 947 587
\end{tabular}
}
\end{table}

\begin{table}
\caption{Computing the QoI with a geometric level sequence for $E=0.001$.\label{tab:simulationresults3}}
\centering
\makebox[\textwidth][c]{
\begin{tabular}{c | c | c | c | r | c | c | c || c}
Level & $\Delta t_\ell$ & $P_\ell$ & $\mathbb{V}\left[ \hat{F}_\ell \right]$ & \multicolumn{1}{c|}{$\mathbb{E}\left[ \hat{F}_\ell - F_{\ell-1} \right]$} & $V_\ell$ & $\mathbb{V}\left[\hat{Y}_\ell\right]$ & $C_\ell$ & $P_\ell C_\ell$ \\
\hline
0 & $1.00 \times 10^{-2}$ & 64 213 534 & 1.47 & $8.65 \times 10^{-1}$ & $1.47 \times 10^{0\phantom{-}}$ & $2.29 \times 10^{-8}$ & 1  & 64 213 534\\
1 & $5.00 \times 10^{-3}$ & 20 183 309 & 1.49 & $1.07 \times 10^{-2}$ & $4.37 \times 10^{-1}$ & $2.16 \times 10^{-8}$ & 3   & 60 549 927\\
2 & $2.50 \times 10^{-3}$ & 13 692 369 & 1.57 & $2.86 \times 10^{-2}$ & $4.02 \times 10^{-1}$ & $2.94 \times 10^{-8}$ & 6   & 82 154 214\\
3 & $1.25 \times 10^{-3}$ & 8 407 373  & 1.65 & $2.89 \times 10^{-2}$ & $3.03 \times 10^{-1}$ & $3.61 \times 10^{-8}$ & 12  & 100 888 476\\
4 & $6.25 \times 10^{-4}$ & 4 771 795  & 1.73 & $2.07 \times 10^{-2}$ & $1.95 \times 10^{-1}$ & $4.09 \times 10^{-8}$ & 24  & 114 523 080\\
5 & $3.13 \times 10^{-4}$ & 2 548 616  & 1.76 & $1.22 \times 10^{-2}$  & $1.12 \times 10^{-1}$ & $4.38 \times 10^{-8}$ & 48  & 122 333 568\\
6 & $1.56 \times 10^{-4}$ & 1 323 896  & 1.78 & $6.82 \times 10^{-3}$  & $6.00 \times 10^{-2}$ & $4.53 \times 10^{-8}$ & 96  & 127 094 016\\
7 & $7.81 \times 10^{-5}$ & 677 724    & 1.79 & $3.57 \times 10^{-3}$ & $3.13 \times 10^{-2}$ & $4.62 \times 10^{-8}$ & 192 & 130 123 008\\
8 & $3.91 \times 10^{-5}$ & 336 519    & 1.82 & $1.94 \times 10^{-3}$ & $1.57 \times 10^{-2}$ & $4.66 \times 10^{-8}$ & 384 & 129 223 296\\
9 & $1.95 \times 10^{-5}$ & 172 183    & 1.81 & $6.79 \times 10^{-4}$ & $8.12 \times 10^{-3}$ & $4.72 \times 10^{-8}$ & 768 & 132 236 544\\
10 & $9.75 \times 10^{-6}$ & 92 747    & 1.80 & $6.00 \times 10^{-5}$ & $3.46 \times 10^{-3}$ & $3.73 \times 10^{-8}$ &1 536& 142 459 392\\
11 & $4.88 \times 10^{-6}$ & 69 615    & 1.79 & $5.27 \times 10^{-4}$ & $2.46 \times 10^{-3}$ & $3.53 \times 10^{-8}$ &3 072 &213 857 280\\
12 & $2.44 \times 10^{-6}$ & 1 000     & 2.01 & $-3.87 \times 10^{-4}$ & $6.12 \times 10^{-4}$&$6.12 \times 10^{-7}$ & 6 144 & 6 144 000\\
\hline
$\sum$ & \multicolumn{4}{c}{} & & $1.06 \times 10^{-6}$ & & 1 425 800 335
\end{tabular}
}
\end{table}
}

We see that the number of samples $P_\ell$ needed to keep $\sum_{\ell=0}^L \mathbb{V}\left[ \hat{Y}_\ell \right] < E^2$ decreases drastically in function of $\ell$. We also see that $\mathbb{E}\left[ F_L - F_{L-1} \right] < E$. The cost per level $P_\ell C_\ell$ is spread quite evenly over the levels, which is to be expected as the geometric factor with which the cost increases with $\ell$ is asymptotically the same as that with which $V_\ell$ decreases. In short, we thus achieve the bias of the finest level, while a large amount of variance reduction is performed in the coarser levels. We can thus conclude that the experimental results match the expected behavior of the multilevel Monte Carlo method.

The total cost of each multilevel simulation, relative to the cost of a single sample at the coarsest level is computed as the sum of the costs at each level, i.e., the sum of the right most column of Tables \ref{tab:simulationresults1} through \ref{tab:simulationresults3}. We can estimate the cost for an equivalent classical Monte Carlo simulation by considering that one needs to perform
\begin{equation}
P_C = \left\lceil \frac{\mathbb{V}\left[ \hat{F}_L \right]}{\sum_{\ell=0}^L\mathbb{V}\left[ \hat{Y}_\ell \right]} \right\rceil
\end{equation}
samples with the fine time step at level $L$, to achieve the same bias and variance as the multilevel estimator. The cost of each sample in the classic Monte Carlo estimator is $\frac{2}{3}C_L$, as we do not need to perform a correlated coarse simulation. Note that, for $E=0.1$, the variance $\mathbb{V}\left[ F_L \right]$ is estimated using very few samples. One should thus be careful about drawing further conclusions from these tables than those made here. 

We now compare the cost of the classical and multilevel Monte Carlo simulations in Table \ref{tab:summary}.
\begin{table}
\caption{Cost comparison between classical and multilevel Monte Carlo\label{tab:summary}}
\centering
\begin{tabular}{c | c | c | c}
RMSE & Classical cost & Multilevel cost & Multilevel speedup\\
\hline
0.1 & 4 544 & 8 062 & 0.56\\
0.01 & 37 011 456 & 7 947 587 & 4.66\\
0.001 & 7 722 983 424 & 1 425 800 335 & 5.42
\end{tabular}
\end{table}
As can be concluded from the table, the multilevel Monte Carlo scheme gives a significant computational advantage when we want to compute low bias results in the setting of the modified Goldstein-Taylor model. This speedup increases as the requested accuracy of the simulation is increased and is expected to asymptotically scale with $\left(\mse\log^2\mse\right)^{-1}$ as the requested root mean square error is further decreased.

\subsubsection{Simulating with a very coarse level}
\label{mlmcapsec_coarsexp}
We now add an extra coarse level with $\Delta t_0=0.5$, and repeat the experiment as before. The results are shown in Tables~\ref{tab:simulationresultscoarse1} through~\ref{tab:simulationresultscoarse3}. In these tables, we see that very little work is done on the coarsest level in comparison with the levels in the geometric sequence. The extra level thus does not have a significant cost, in comparison with the rest of the simulation. We observe that the expected behavior of the multilevel Monte Carlo method, as discussed in the previous section, is also present when including the coarser level. 

We present a cost comparison with and without the coarse level in Table~\ref{tab:summary2}. We see that including a very coarse level consistently gives a speedup, matching expectations from multilevel theory. One observation is that the speedup becomes less significant as the requested root mean square error $\mse$ decreases. This makes sense as the higher the requested accuracy, the more levels are needed, and the smaller the influence of the coarse level strategy. Another thing to note is that, although $V_1$ is smaller than $V_0$, it is still relatively large, and much larger than $V_\ell$ in the following fine levels. We believe that it may be possible to further reduce the variance of level 1 in this strategy by using a different correlation strategy, in which we take into account that the coefficients in \eqref{eq:GTmod} vary strongly if $\Delta t$ takes values with different orders of magnitude.

{\setlength{\tabcolsep}{2pt}
\def\arraystretch{1.05}
\begin{table}
\caption{Computing the QoI with an extra coarse level for $E=0.1$.\label{tab:simulationresultscoarse1}}
\centering 
\makebox[\textwidth][c]{
\begin{tabular}{c | c | c | c | r | c | c | c || c}
Level & $\Delta t_\ell$ & $P_\ell$ & $\mathbb{V}\left[ \hat{F}_\ell \right]$ & \multicolumn{1}{c|}{$\mathbb{E}\left[ \hat{F}_\ell - F_{\ell-1} \right]$} & $V_\ell$ & $\mathbb{V}\left[\hat{Y}_\ell\right]$ & $C_\ell$ & $P_\ell C_\ell$ \\
\hline
0 & $5.00 \times 10^{-1}$ & 6 476 & 2.17 & $1.01 \times 10^{0\phantom{-}}$ & $2.17 \times 10^{0\phantom{-}}$ & $3.36 \times 10^{-4}$ & 0.02   & 130\\
1 & $1.00 \times 10^{-2}$ & 733 & 1.41 & $-6.56 \times 10^{-2}$ & $1.41 \times 10^{0\phantom{-}}$ & $1.92 \times 10^{-3}$ & 1.02   & 748\\
2 & $5.00 \times 10^{-3}$ & 232   & 1.65 & $2.05 \times 10^{-2}$ & $3.91 \times 10^{-1}$ & $1.68 \times 10^{-3}$ & 3   & 696\\
3 & $2.50 \times 10^{-3}$ & 69   & 1.09 & $1.64 \times 10^{-2}$ & $1.05 \times 10^{-1}$ & $1.52 \times 10^{-3}$ & 6   & 414\\
4 & $1.25 \times 10^{-3}$ & 40    & 0.76 & $-1.42 \times 10^{-2}$ & $3.06 \times 10^{-1}$ & $7.65 \times 10^{-3}$ & 12  & 480\\
\hline
$\sum$ & \multicolumn{4}{c}{} & & $1.31 \times 10^{-2}$ & & 2 467
\end{tabular}
}
\end{table}

\begin{table}
\caption{Computing the QoI with an extra coarse level for $E=0.01$.\label{tab:simulationresultscoarse2}}
\centering 
\makebox[\textwidth][c]{
\begin{tabular}{c | c | c | c | r | c | c | c || c}
Level & $\Delta t_\ell$ & $P_\ell$ & $\mathbb{V}\left[ \hat{F}_\ell \right]$ & \multicolumn{1}{c|}{$\mathbb{E}\left[ \hat{F}_\ell - F_{\ell-1} \right]$} & $V_\ell$ & $\mathbb{V}\left[\hat{Y}_\ell\right]$ & $C_\ell$ & $P_\ell C_\ell$ \\
\hline
0 & $5.00 \times 10^{-1}$ & 3 771 030 & 1.97 & $9.91 \times 10^{-1}$ & $1.97 \times 10^{0\phantom{-}}$ & $5.22 \times 10^{-7}$ & 0.02   & 75 421\\
1 & $1.00 \times 10^{-2}$ & 448 812   & 1.48 & $-1.22 \times 10^{-1}$ & $1.42 \times 10^{0\phantom{-}}$ & $3.16 \times 10^{-6}$ & 1.02   & 457 788\\
2 & $5.00 \times 10^{-3}$ & 145 586   & 1.49 & $1.03 \times 10^{-2}$ & $4.40 \times 10^{-1}$ & $3.02 \times 10^{-6}$ & 3   & 436 758\\
3 & $2.50 \times 10^{-3}$ & 97 850    & 1.55 & $2.93 \times 10^{-2}$ & $3.99 \times 10^{-1}$ & $4.07 \times 10^{-6}$ & 6   & 587 100\\
4 & $1.25 \times 10^{-3}$ & 59 970    & 1.69 & $2.86 \times 10^{-2}$ & $2.98 \times 10^{-1}$ & $4.98 \times 10^{-6}$ & 12  & 719 640\\
5 & $6.25 \times 10^{-4}$ & 33 540    & 1.74 & $2.13 \times 10^{-2}$ & $1.88 \times 10^{-1}$ & $5.62 \times 10^{-6}$ & 24  & 804 960\\
6 & $3.13 \times 10^{-4}$ & 18 508    & 1.69 & $1.09 \times 10^{-2}$ & $1.13 \times 10^{-1}$ & $6.08 \times 10^{-6}$ & 48  & 888 384\\
7 & $1.56 \times 10^{-4}$ & 9 399     & 1.74 & $8.59 \times 10^{-3}$ & $5.64 \times 10^{-2}$ & $6.01 \times 10^{-6}$ & 96  & 902 304\\
8 & $7.81 \times 10^{-5}$ & 4 674     & 1.95 & $6.90 \times 10^{-3}$ & $3.45 \times 10^{-2}$ & $7.38 \times 10^{-6}$ & 192 & 897 408\\
9 & $3.91 \times 10^{-5}$ & 3 000     & 1.70 & $5.10 \times 10^{-3}$ & $2.17 \times 10^{-2}$ & $5.69 \times 10^{-6}$ & 384 & 1 462 272\\
10& $1.96 \times 10^{-5}$ &   500     & 1.48 &$-4.09 \times 10^{-3}$ & $4.87 \times 10^{-3}$ & $9.73 \times 10^{-6}$ & 768 & 384 000\\
\hline
$\sum$ & \multicolumn{4}{c}{} & & $5.63 \times 10^{-5}$ & & 7 616 035
\end{tabular}
}
\end{table}

\begin{table}
\caption{Computing the QoI with an extra coarse level for $E=0.001$.\label{tab:simulationresultscoarse3}}
\centering 
\makebox[\textwidth][c]{
\begin{tabular}{c | c | c | c | r | c | c | c || c}
Level & $\Delta t_\ell$ & $P_\ell$ & $\mathbb{V}\left[ \hat{F}_\ell \right]$ & \multicolumn{1}{c|}{$\mathbb{E}\left[ \hat{F}_\ell - F_{\ell-1} \right]$} & $V_\ell$ & $\mathbb{V}\left[\hat{Y}_\ell\right]$ & $C_\ell$ & $P_\ell C_\ell$ \\
\hline
0 & $5.00 \times 10^{-1}$ & 503 703 652 & 1.96 & $9.90 \times 10^{-1}$ & $1.96 \times 10^{0\phantom{-}}$ & $3.89 \times 10^{-9}$ & 0.02   & 10 074 073\\
1 & $1.00 \times 10^{-2}$ & 59 957 303  & 1.47 & $-1.25 \times 10^{-1}$& $1.42 \times 10^{0\phantom{-}}$ & $2.36 \times 10^{-8}$ & 1.02   & 61 156 449\\
2 & $5.00 \times 10^{-3}$ & 19 401 720  & 1.49 & $1.06 \times 10^{-2}$ & $4.36 \times 10^{-1}$ & $2.25 \times 10^{-8}$ & 3   & 58 205 160\\
3 & $2.50 \times 10^{-3}$ & 13 171 457  & 1.57 & $2.88 \times 10^{-2}$ & $4.02 \times 10^{-1}$ & $3.05 \times 10^{-8}$ & 6   & 79 028 742\\
4 & $1.25 \times 10^{-3}$ & 8 077 832   & 1.66 & $2.90 \times 10^{-2}$ & $3.02 \times 10^{-1}$ & $3.74 \times 10^{-8}$ & 12  & 96 933 984\\
5 & $6.25 \times 10^{-4}$ & 4 581 254   & 1.73 & $2.08 \times 10^{-2}$ & $1.95 \times 10^{-1}$ & $4.25 \times 10^{-8}$ & 24  & 109 950 096\\
6 & $3.13 \times 10^{-4}$ & 2 461 186   & 1.77 & $1.22 \times 10^{-2}$ & $1.12 \times 10^{-1}$ & $4.56 \times 10^{-8}$ & 48  & 118 136 928\\
7 & $1.56 \times 10^{-4}$ & 1 268 014   & 1.79 & $6.82 \times 10^{-3}$ & $5.97 \times 10^{-2}$ & $4.71 \times 10^{-8}$ & 96  & 121 729 344\\
8 & $7.81 \times 10^{-5}$ & 648 311     & 1.79 & $3.52 \times 10^{-3}$ & $3.11 \times 10^{-2}$ & $4.79 \times 10^{-8}$ & 192 & 124 475 712\\
9 & $3.91 \times 10^{-5}$ & 331 642     & 1.81 & $1.79 \times 10^{-3}$ & $1.65 \times 10^{-2}$ & $4.97 \times 10^{-8}$  & 384 & 127 350 528\\
10 &$1.95 \times 10^{-5}$ & 157 940    & 1.81  & $1.07 \times 10^{-3}$ & $7.36 \times 10^{-3}$ & $4.66 \times 10^{-8}$ & 768 & 121 297 920\\
11 &$9.75 \times 10^{-6}$ & 94 235     & 1.79 & $2.35 \times 10^{-4}$  & $3.67 \times 10^{-3}$ & $3.89 \times 10^{-8}$ &1 536 & 144 744 960\\
12 &$4.88 \times 10^{-6}$ & 47 286     & 1.80 & $1.65 \times 10^{-4}$  & $2.63 \times 10^{-3}$ & $5.57 \times 10^{-8}$ &3 072 & 145 262 592\\
13 &$2.44 \times 10^{-6}$ & 1 000      & 1.87 & $-2.69 \times 10^{-4}$ & $4.09 \times 10^{-4}$&$4.09 \times 10^{-7}$ & 6 144 & 6 144 000\\
\hline
$\sum$ & \multicolumn{4}{c}{} & & $9.01 \times 10^{-7}$ & & 1 324 490 488
\end{tabular}
}
\end{table}
}

\begin{table}
\caption{Cost comparison with and without an extra coarse level\label{tab:summary2}}
\centering
\begin{tabular}{c | c | c | c}
RMSE & Without coarse level & With coarse level & Coarse level speedup\\
\hline
0.1 & 8 062 & 2 467 & 3.27\\
0.01 & 7 947 587 & 7 616 035 & 1.04\\
0.001 & 1 425 800 335 & 1 324 490 488 & 1.08
\end{tabular}
\end{table}

\section{Conclusion}
\label{sec:conclusion}

We presented a multilevel Monte Carlo scheme for simulating a generic class of kinetic equations using asymptotic-preserving particle schemes. Although the scheme was derived in one dimension, it can be generalized to higher dimensional simulations with little extra effort. After presenting the scheme, we analyzed its convergence behavior for general functions of the particle position and velocity and provided some insights into level selection strategies.

Some analytical properties of the variance of the difference in position and velocity of two particle simulations were derived. Using these properties, we studied the behavior of the multilevel scheme as a function of the simulation time step $\Delta t_l$ for general Lipschitz quantities of interest. We proved that the multilevel Monte Carlo scheme converges with a computational cost that is asymptotically bounded in the root mean square error bound $\mse$ by $\mathcal{O}\left(\mse^{-2}\log^2(\mse)\right)$. The scheme's speedup over single level Monte Carlo was confirmed through numerical experiments.

We also compared two approaches to select the levels in the multilevel scheme. After both theoretical analysis and numerical experiments, we concluded that the best option given the proposed correlation strategy is to start with a coarse level with $\Delta t_0=t^*$, followed by a geometric sequence of levels, decreasing from $\Delta t_1=\epsilon^2$.

This work is a first step in combining the multilevel Monte Carlo method to asymptotic-preserving particle schemes for kinetic equations. In future work, we intend to look at alternative ways to increase the correlation of level 1 in the simulation strategy with a very coarse level, as this is of key importance to reduce the high computational cost of simulations for small values of $\epsilon$. We also intend to expand our simulation code to cope with more complex models, including higher dimensional cases, absorption and position dependent model parameters, making the scheme directly applicable in relevant applications, such as fusion reactor design. An expansion to multi-index Monte Carlo, where the value of both $\Delta t$ and $\epsilon$ are simultaneously varied, can also be considered.

\section*{Acknowledgments}
We thank the anonymous reviewer for their substantial suggestions which have significantly improved this article. Emil L{\o}vbak is funded by the Research Foundation - Flanders (FWO) under SB-Fellowship number 1SB1919N. The computational resources and services used in this work were provided by the VSC (Flemish Supercomputer Center), funded by the Research Foundation - Flanders (FWO) and the Flemish Government -- department EWI.

\section*{Conflict of interest}

The authors declare that they have no conflict of interest.

\appendix

\section{Computing the covariance sums needed to prove Lemma~\ref{lem:transport_variance}}\label{app:covariances}

\subsection{$\displaystyle\sum_{m=0}^{M-1} \Cov\left(\Delta \U^{n,m}_{p,\ell}, \Delta \U^{n}_{p,\ell-1} \right)$}
\label{sec:cov_transp_coarse_fine}

The covariance between the coarse increment $n$ and a fine sub-increment $(n,m)$, given Lemma~\ref{lem:transport_expectation}, can be written out as
\begin{equation}
\label{eq:cov_transport_coarse_fine}
\Cov\left(\Delta \U^{n,m}_{p,\ell}, \Delta \U^{n}_{p,\ell-1} \right) = \frac{\Delta t_{\ell-1}^2 \CV_{\Delta t_\ell}  \CV_{\Delta t_{\ell-1}}}{M} \E\left[\VD^{n,m}_{p,\ell}\VD^{n}_{p,\ell-1}\right].
\end{equation}

To calculate the expected value in~\eqref{eq:cov_transport_coarse_fine}, we need to consider the probabilities of coupled collisions taking place in the correlated simulations. If a collision takes place in a given set of $M$ fine increments, the probability that the correlated coarse simulation time step will also simulate a collision is given by
\begin{align}
P&\left(\PT^n_{p,\ell-1} \geq \pncr \middle| \PT^{n,\text{max}}_{p,\ell} \geq \pncf \right) \\
&\qquad= P\left( {\left(\PT^{n,\text{max}}_{p,\ell}\right)}^M \geq \pncr \middle| {\left(\PT^{n,\text{max}}_{p,\ell}\right)}^M \geq \pncf^M \right) \\
&\qquad= \frac{1-\pncr}{1-\pncf^M}.\label{eq:correlation_probability_GT}
\end{align}
From Lemma~\ref{lem:collision_likelihood}, we know that it is not possible for a collision to take place in the coarse simulation, without a collision taking place in the fine simulation. This leaves three possibilities, when considering collision behavior in coarse time step $n-1$:
\begin{itemize}
\item \textbf{Both at level $\ell-1$ and at level $\ell$, no collision occurred in time step $n-1$.} In this case, time step $n-1$ will not affect the correlation of the velocities between the simulations. If the velocities were correlated at the beginning of time step $n-1$, they will still be so at the end of the time step, and vice versa. In this case, we thus need to look at step $n-2$, and so on, until we reach a past time step that satisfies one of the following two cases.
\item \textbf{A collision occurred at level $\ell$ in time step $n-1$, but not at level $\ell-1$.} In this case, a new $\VD^{n-1,m,*}_{p,\ell}$ was drawn from $\mathcal{M}(v)$, for some $m$, independently of the value of $\VD^{n-1,*}_{p,\ell-1}$. Because all sampled velocities are independent, there is no correlation between $\VD^{n,m-1}_{p,\ell}$ and $\VD^{n-1}_{p,\ell-1}$, making the expected value of their product zero by \eqref{eq:beta_properties}.
\item \textbf{A collision occurred both at level $\ell$ and at level $\ell-1$ in time step $n-1$.} We know that $\VD^{n,0}_{p,\ell} = \VD^{n-1,M,*}_{p,\ell} = \VD^{n}_{p,\ell-1} = \VD^{n-1,*}_{p,\ell-1}$ by \eqref{eq:betacorr}. For every other fine simulation sub-step $m>1$ in time step $n$, there is a probability $1-\pncf^{m}$ that a collision has taken place in at least one of the steps $(n,0)$ through $(n,m)$. These collisions will not affect the coarse simulation until time step $n+1$, where $\VD^{n,*}_{p,\ell}$ is used to generate a new velocity. So we have that $\VD^{n,m}_{p,\ell} = \VD^{n}_{p,\ell-1}$ with a probability $\pncf^{m}$, otherwise the two velocities are uncorrelated.
\end{itemize}
From the above list of possibilities, we conclude that the random variables $\VD^{n,m}_{p,\ell}$ and $\VD^{n}_{p,\ell-1}$ are equal if both of the following are true:
\begin{enumerate}
\item The last simulation step at level $\ell$ that underwent a collision is a sub-step of a simulation step $n^\prime$ at level $\ell-1$ which also underwent a collision. 
\item The coarse simulation step $n^\prime$ is not the current coarse step $n$. 
\end{enumerate}
Otherwise, $\VD^{n,m}_{p,\ell}$ and $\VD^{n}_{p,\ell-1}$ are uncorrelated.

Making use of \eqref{eq:beta_properties} and \eqref{eq:correlation_probability_GT}, we have that
\begin{equation}
\label{eq:corr_transport_coarse_fine}
\E\left[\VD^{n,m}_{p,\ell}\VD^{n}_{p,\ell-1}\right] = \frac{1-\pncr}{1-\pncf^M} \pncf^{m}.
\end{equation}
Plugging \eqref{eq:corr_transport_coarse_fine} into \eqref{eq:cov_transport_coarse_fine} gives
\begin{align}
\sum_{m=0}^{M-1} \Cov\left(\Delta \U^{n,m}_{p,\ell}, \Delta \U^{n}_{p,\ell-1} \right) &= \sum_{m=0}^{M-1} \frac{\Delta t_{\ell-1}^2 \CV_{\Delta t_\ell}  \CV_{\Delta t_{\ell-1}}}{M} \frac{1-\pncr}{1-\pncf^M} \pncf^{m} \\
&= \frac{\Delta t_{\ell-1}^2 \CV_{\Delta t_\ell}  \CV_{\Delta t_{\ell-1}}}{M} \frac{\pcr}{\pcf} \\
&=\Delta t_{\ell-1}^2\CV_{\Delta t_{\ell-1}}^2 = \V\left[\Delta \U^{n}_{p,\ell-1}\right],\label{eq:cov_sum_transport_coarse_fine}
\end{align}
where the final equality is a fortuitous coincidence that will shorten some expressions in Section~\ref{sec:analysis}.

\subsection{$\displaystyle\sum_{m=0}^{M-2} \sum_{m^\prime = m+1}^{M-1}\Cov\left(\Delta \U^{n,m}_{p,\ell}, \Delta \U^{n,m^\prime}_{p,\ell} \right)$}

As in Section~\ref{sec:cov_transp_coarse_fine}, we start by writing out the covariance between subsequent fine sub-increments $(n,m)$ and $(n,m^\prime)$ as
\begin{equation}
\label{eq:cov_coll_fine_start}
\Cov\left(\Delta \U^{n,m}_{p,\ell}, \Delta \U^{n,m^\prime}_{p,\ell} \right) = \Delta t_\ell^2 \CV_{\Delta t_\ell}^2 \E \left[ \VD^{n,m}_{p,\Delta t_\ell} \VD^{n,m^\prime}_{p,\Delta t_\ell} \right].
\end{equation}

Calculating the sum of the covariance between subsequent fine increments is straightforward. To simplify notation, we introduce $\Delta m$ as shorthand for $m^\prime - m$. As the collision probability is constant across time steps, the covariance is given by
\begin{equation}
\label{eq:cov_coll_fine_exp} \sum_{m=0}^{M-2} \sum_{m^\prime = m+1}^{M-1} \Cov\left(\Delta \U^{n,m}_{p,\ell}, \Delta \U^{n,m^\prime}_{p,\ell} \right) = \Delta t_\ell^2 \CV_{\Delta t_\ell}^2 \sum_{m=0}^{M-2} \sum_{m^\prime = m+1}^{M-1} \E\left[\VD^{n,m}_{p,\ell}\VD^{n,m^\prime}_{p,\ell}\right].
\end{equation}
Making use of \eqref{eq:beta_properties}, we get that the r.h.s. of \eqref{eq:cov_coll_fine_exp} is equal to
\begin{equation}
\Delta t_\ell^2 \CV_{\Delta t_\ell}^2 \sum_{m=0}^{M-2} \sum_{m^\prime = m+1}^{M-1} \pncf^{m^\prime - m} = \Delta t_\ell^2 \CV_{\Delta t_\ell}^2 \sum_{\dm=1}^{M-1} (M-\dm) \pncf^{\dm}\label{eq:cov_coll_fine_p}
\end{equation}
By making use of the identity
\begin{equation}
\label{eq:identity}
\sum_{m=1}^{M-1}(M-m)a^m = \frac{a(a^{M}+M(1-a)-1)}{(1-a)^2}
\end{equation}
and noting that $\frac{\pncf}{\pcf}=\frac{\epsilon^2}{\Delta t_\ell}$, we work out \eqref{eq:cov_coll_fine_p} as
\begin{align}
\label{eq:cov_coll_fine_eps} \Delta t_\ell^2 \CV_{\Delta t_\ell}^2 \sum_{\dm=1}^{M-1} (M-\dm) \pncf^{\dm} &= \epsilon^2 \Delta t_\ell \CV_{\Delta t_\ell}^2 \frac{\pncf^{M}+M\pcf-1}{\pcf} \\
&\label{eq:cov_sum_transport_fine}= \epsilon^2 \CV_{\Delta t_\ell}^2 \left( M\Delta t_\ell - (\epsilon^2+\Delta t_\ell)\left(1 - \pncf^M \right) \right).
\end{align}

\subsection{$\displaystyle\sum_{n=0}^{N-2} \sum_{n^\prime=n+1}^{N-1}\Cov \left( \Delta_{\U,n} , \Delta_{\U,n^\prime} \right)$}
Making use of Lemma~\ref{lem:transport_expectation}, we write out the covariance of differences of transport increments for a given $n$ and $n^\prime$ as
\begin{align}
\Cov\left(\Delta_{\U,n},\Delta_{\U,n^\prime}\right) &= \E \left[ \left(\sum_{m=0}^{M-1} \Delta \U^{n,m}_{p,\ell} - \Delta \U^n_{p,\ell-1} \right) \left(\sum_{m^\prime=0}^{M-1} \Delta \U^{n^\prime,m^\prime}_{p,\ell} - \Delta \U^{n^\prime}_{p,\ell-1} \right)  \right]\\
\begin{split}
&= \underbrace{\E \left[ \sum_{m=0}^{M-1} \sum_{m^\prime=0}^{M-1} \Delta \U^{n,m}_{p,\ell} \Delta \U^{n^\prime,m^\prime}_{p,\ell} \right]}_{(I)} + \underbrace{\E \left[ \Delta \U^n_{p,\ell-1} \Delta \U^{n^\prime}_{p,\ell-1} \right]\vphantom{\E \left[ \sum_{m=0}^{M-1} \sum_{m^\prime=0}^{M-1} \Delta \U^{n,m}_{p,\ell} \Delta \U^{n^\prime,m^\prime}_{p,\ell} \right]}}_\text{(II)}\\	
&\qquad\quad - \underbrace{\E \left[ \sum_{m=0}^{M-1} \Delta \U^{n,m}_{p,\ell} \Delta \U^{n^\prime}_{p,\ell-1} \right]}_\text{(III)} - \underbrace{\E \left[ \sum_{m=0}^{M-1} \Delta \U^n_{p,\ell-1} \Delta \U^{m^\prime,n^\prime}_{p,\ell} \right]}_\text{(IV)}.
\label{eq:global_covar}\end{split}
\end{align}
We now calculate each term on the r.h.s. of \eqref{eq:global_covar} separately. We start by writing the expressions in terms of $\Delta n$, as a shorthand for $n^\prime - n$. In the remainder of the section we assume $n^\prime > n$, without loss of generality.

The increment covariance at level $\ell$ $(\textrm{I})$ is calculated from a similar starting point to \eqref{eq:cov_coll_fine_start}:
\begin{align}
\E \left[ \sum_{m=0}^{M-1} \sum_{m^\prime=0}^{M-1} \Delta \U^{n,m}_{p,\ell} \Delta \U^{n^\prime,m^\prime}_{p,\ell} \right] &= \sum_{m=0}^{M-1} \sum_{m^\prime=0}^{M-1} \Delta t_\ell^2 \CV_{\Delta t_\ell}^2 \E\left[\VD^{n,m}_{p,\ell}\VD^{n^\prime,m^\prime}_{p,\ell}\right]\\
&= \sum_{m=0}^{M-1} \sum_{m^\prime=0}^{M-1} \Delta t_\ell^2 \CV_{\Delta t_\ell}^2 \pncf^{M\dns+m^\prime - m}.\label{eq:cov_subsequent_fine_fine_intermediate}
\end{align}
We now substitute the double summation in \eqref{eq:cov_subsequent_fine_fine_intermediate} with a two summations over $\dm = |m^\prime - m|$:
\begin{align}
\Delta t_\ell^2 \CV_{\Delta t_\ell}^2 & \left( \vphantom{\sum_{\dm=0}^{M-1} (M-\dm) \pncf^{M\dns-\dm}} \right. \underbrace{\sum_{\dm=0}^{M-1} (M-\dm) \pncf^{M\dns-\dm}}_{m^\prime \leq m} + \underbrace{\sum_{\dm=1}^{M-1} (M-\dm) \pncf^{M\dns+\dm}}_{m^\prime > m} \left. \vphantom{\sum_{\dm=0}^{M-1} (M-\dm) \pncf^{M\dns-\dm}} \right)\\
&= \Delta t_\ell^2 \CV_{\Delta t_\ell}^2  \left(M + \sum_{\dm=1}^{M-1} (M-\dm) \left( \pncf^{\dm} + \pncf^{-\dm} \right) \right)\pncf^{M\dns}.\label{eq:cov_subsequent_fine_fine_intermediate2}
\end{align}
We now again make use of the identity \eqref{eq:identity}, giving that \eqref{eq:cov_subsequent_fine_fine_intermediate2} equals
\begin{align}
& \Delta t_\ell^2 \CV_{\Delta t_\ell}^2 \!\! \left(\!\! M + \frac{\pncf}{(1-\pncf)^2}\!\left( \! M\!\left( 2 - \pncf \! -\pncf^{-1} \! \right) \! + \pncf^M \! + \pncf^{-M} \! -2 \right) \!\! \right)\pncf^{M\dns} \\
&\quad= \epsilon^2 \tilde{v}^2 \left(\pncf^{1-M} + \pncf^{M+1} - 2 \pncf \right) \pncf^{M\dns}.\label{eq:cov_subsequent_fine_fine}
\end{align}

Calculating the covariance of increments at level $\ell-1$ (II) is also straightforward:
\begin{align}
\E \left[ \Delta \U^n_{p,\ell-1} \Delta \U^{n^\prime}_{p,\ell-1} \right] = \Delta t_{\ell-1}^2\CV_{\Delta t_{\ell-1}}^2 \E\left[\VD^n_{p,\ell-1}\VD^{n^\prime}_{p,\ell-1}\right]= \Delta t_{\ell-1}^2\CV_{\Delta t_{\ell-1}}^2 \pncr^{\dns}.\retainlabel{eq:cov_subsequent_coarse_coarse}
\end{align}

The expression for the covariance between increments at level $\ell$ and level $\ell-1$ at differing time steps $n$ and $n'$ depends on the relative position of the increments, i.e., whether the increment at level $\ell$ comes before that at level $\ell-1$, or not. If the fine increment at level $\ell$ comes first (III), we need to calculate
\begin{equation}
\label{eq:covar_coarse_fine}
\E \left[ \sum_{m=0}^{M-1} \Delta \U^{n,m}_{p,\ell} \Delta \U^{n^\prime}_{p,\ell-1}  \right] = \Delta t_\ell \Delta t_{\ell-1} \CV_{\Delta t_\ell} \CV_{\Delta t_{\ell-1}} \sum_{m=0}^{M-1} \E\left[\VD^{n,m}_{p,\ell}\VD^{n^\prime}_{p,\ell-1}\right].
\end{equation}
To calculate the expected value of the r.h.s. of \eqref{eq:covar_coarse_fine}, we list the possible simulation behaviors at level $\ell-1$ in time step $n$, relative to the fine sub-step $m$:
\begin{itemize}
\item \textbf{No collision occurs in the simulation at level $\ell$.} This situation occurs with probability $\pncf^M.$ In this case, we have already established that no collision occurs at level $\ell-1$. The probability of the simulations being correlated at the end of time step $n$ is therefore equal to the probability of them being correlated at the start of time step $n$, given by \eqref{eq:correlation_probability_GT}.
\item \textbf{No collision occurs in sub-steps 0 through $m-1$, but at least one collision occurs in sub-steps $m$ through $M-1$.} This situation occurs with probability $$\pncf^{m} \left( 1 - \pncf^{M-m} \right) = \pncf^{m} - \pncf^{M}.$$ If the  simulation at level $\ell-1$ also has a collision, then $\VD^{n+1}_{p,\ell-1}$ and $\VD^{n,m}_{p,\ell}$ are independent by \eqref{eq:betacorr}. If no collision occurred in the simulation at level $\ell-1$, which is the case with probability $$\cfrac{\pncr-\pncf^M}{1-\pncf^M},$$ then $\VD^n_{p,\ell-1} = \VD^{n,m}_{p,\ell}$, if the trajectories were correlated at the beginning of time step $n$. This is the case with probability \eqref{eq:correlation_probability_GT}.
\item \textbf{At least one collision occurs in sub-steps 0 through $m-1$, but none occur in sub-steps $m$ through $M$.} This occurs with probability $$\left( 1 - \pncf^{m} \right) \pncf^{M-m} = \pncf^{M-m} - \pncf^M.$$ By \eqref{eq:betacorr} we know that $\VD^{n,*}_{p,\ell-1} = \VD^{n,m}_{p,\ell}$, if a collision also occurs in time step $n$ of the simulation at level $\ell-1$. This happens with probability \eqref{eq:correlation_probability_GT}. 
\item \textbf{Collisions happen both before sub-step $m$ and during or afterwards.} In this case, there is no correlation between $\VD^{n,*}_{p,\ell-1}$ and $\VD^{n,m}_{p,\ell}$.
\end{itemize}
By adding the non-zero contributions from these four cases, we calculate the probability that $\VD^{n,m}_{p,\ell}$ is correlated with $\VD^{n,*}_{p,\ell-1}$. To get the probability of the correlation holding until $\VD^{n^\prime}_{p,\ell-1}$, we multiply the sum by $\pncr^{\dns-1}$. Given the properties \eqref{eq:beta_properties}, we can state that the right hand side sum in \eqref{eq:covar_coarse_fine} is given by
\begin{align}
\sum_{m=0}^{M-1} \E&\left[\VD^{n,m}_{p,\ell}\VD^{n^\prime}_{p,\ell-1}\right]= \sum_{m=0}^{M-1} \left(\pncf^M + \left( \pncf^{m} \!\! - \! \pncf^{M} \right) \cfrac{\pncr-\pncf^M}{1-\pncf^M} \right.\\
&\phantom{\sum_{m=0}^{M-1} \E\left[\VD^{n,m}_{p,\ell}\VD^{n^\prime}_{p,\ell-1}\right]= \sum_{m=0}^{M-1}}\quad\left. \vphantom{\cfrac{\pncr-\pncf^M}{1-\pncf^M}} + \pncf^{M-m} - \pncf^M \right) \cfrac{1-\pncr}{1-\pncf^M} \pncr^{\dns-1}.\label{eq:cov_subsequent_fine_rough_GT_intermediate}
\end{align}
Making us of the fact that $\frac{\pcr}{\pncr}=\frac{\Delta t_{\ell-1}}{\epsilon^2}$ we can then write the r.h.s. of \eqref{eq:cov_subsequent_fine_rough_GT_intermediate} as
\begin{align}
\sum_{m=0}^{M-1} & \frac{\Delta t_{\ell-1}}{\epsilon^2} \! \left( \cfrac{\pncr-\pncf^M}{1-\pncf^M} \left(\pncf^{m} \!\! - \! \pncf^{M} \right) + \pncf^{M-m} \right) \!\! \cfrac{\pncr^{\dns}}{1-\pncf^M}\\
&= \frac{\Delta t_{\ell-1}}{\epsilon^2} \! \left( \cfrac{\pncr-\pncf^M}{1-\pncf^M} \left( \cfrac{1-\pncf^M}{1-\pncf} - M \pncf^{M} \right) - \cfrac{\pncf^{M}-1}{\pncf^{-1}-1} \right) \!\! \cfrac{\pncr^{\dns}}{1-\pncf^M} \mkern-60mu\\
&= \frac{\Delta t_{\ell-1}}{\epsilon^2} \left( \cfrac{ \pncr-\pncf^M }{ 1-\pncf^M } \left(\frac{1}{\pcf} - \cfrac{M\pncf^{M}}{ 1-\pncf^M} \right) + \frac{\epsilon^2}{\Delta t_\ell} \right) \pncr^{\dns}.\retainlabel{eq:cov_subsequent_fine_rough_GT}
\end{align}

To calculate the covariance of coarse time steps preceding fine time steps (IV), fewer calculations are needed. The two time steps are correlated if the trajectories were correlated at the start of coarse simulation time step $n$ and this correlation was not lost due to a collision in the fine simulation between time steps $(n,0)$ through $(n^\prime,m^\prime-1)$
\begin{align}
&\E \left[ \sum_{m=0}^{M-1} \Delta \U^n_{p,\ell-1} \Delta \U^{n^\prime,m^\prime}_{p,\ell} \right] = \Delta t_\ell \Delta t_{\ell-1} \CV_{\Delta t_\ell} \CV_{\Delta t_{\ell-1}} \sum_{m^\prime=0}^{M-1} \E\left[\VD_{p,\ell-1}^n\VD_{p,\ell}^{n^\prime,m^\prime}\right]\\
&= \Delta t_\ell \Delta t_{\ell-1} \CV_{\Delta t_\ell} \CV_{\Delta t_{\ell-1}} \sum_{m^\prime=0}^{M-1} \cfrac{1-\pncr}{1-\pncf^M} \pncf^{M\dns + m^\prime} \\
&= \Delta t_\ell \Delta t_{\ell-1} \CV_{\Delta t_\ell} \CV_{\Delta t_{\ell-1}} \cfrac{1-\pncr}{1-\pncf^M}  \cfrac{1-\pncf^M}{1-\pncf} \pncf^{M\dns} \\
&= \Delta t_{\ell-1}^2 \CV_{\Delta t_{\ell-1}}^2 \pncf^{M\dns}.\label{eq:cov_subsequent_rough_fine_GT}
\end{align}

Making use of expressions \eqref{eq:cov_subsequent_fine_fine} through \eqref{eq:cov_subsequent_rough_fine_GT}, we get
\begin{align}
\mkern-5mu\sum_{n=0}^{N-1} \sum_{n^\prime=n}^N\mkern-10mu & \mkern10mu \Cov(\Delta_{\U,n},\Delta_{\U,n^\prime}) \\
&= \sum_{n=0}^{N-1} \sum_{n^\prime=n}^N \covone \pncf^{M(\dn)} + \covtwo  \pncr^{\dn}\\
&= \sum_{\dns=1}^{N-1} (N-\dns) \left( \covone \pncf^{M\dns} + \covtwo \pncr^{\dns} \right)\\
&= \covone \pncf^M \!\! \cfrac{\pncf^{MN} \!\! - \! N\pncf^M \!\! + \! N \! - \! 1}{\left( 1 - \pncf^M \right)^2} + \covtwo  \pncr \!\! \frac{\pncr^{N} \!\!\! - \! N\pncr \!\! + \! N \! - \! 1}{\left( 1-\pncr \right)^2},\mkern-12mu\label{eq:final_covar}
\end{align}
$
\text{with} \qquad \covone =  \epsilon^2 \tilde{v}^2 \left( \pncf^{1-M} + \pncf^{M+1} - 2 \pncf \right) - \Delta t_{\ell-1}^2 \CV_{\Delta t_{\ell-1}}^2 \qquad \text{and}
$
\begin{equation}
\covtwo = \Delta t_{\ell-1}^2\CV_{\Delta t_{\ell-1}}^2 \!\! \left( \! 1 \! - \! \frac{\Delta t_\ell \CV_{\Delta t_\ell}}{\epsilon^2 \CV_{\Delta t_{\ell-1}}} \!\! \left( \cfrac{ \pncr-\pncf^M }{ 1-\pncf^M } \! \left(  \! \frac{1}{\pcf} - \cfrac{M\pncf^{M}}{ 1-\pncf^M} \right) \! + \frac{\epsilon^2}{\Delta t_\ell} \right) \!\! \right) \!\! .
\end{equation}

\section{Theoretical background to Section~\ref{sec:level_strategy}}
\label{sec:coarse_theoretical}
Based on Section 2.6 in~\cite{Giles2015} it can be shown that it is beneficial to leave out level 0 if
\begin{equation}
\label{eq:coarse_threshold}
\sqrt{C_0 \V \left[ F_0 \right]} + \sqrt{\left( C_0 + C_1 \right]) \V\left[ F_1-F_0 \right]} - \sqrt{C_1 \V \left[ F_1 \right]} > 0,
\end{equation}
with $C_\ell$ the cost of a sample at level $\ell$. Our analytical results from Section~\ref{sec:bias_variance} can only strictly be applied in \eqref{eq:coarse_threshold} if the quantity of interest is the expected particle position or velocity, as applying Lipschitz constants gives independent upper bounds for the positive and negative terms, hence we will also back up our claims with numerical experiments. Here, we only consider the particle position variance, as $\V\left[X_\ell-X_{\ell-1}\right]\gg \V\left[V_\ell-V_{\ell-1}\right]$ and $\V\left[X_\ell\right]\gg \V\left[V_\ell\right]$, for large $\Delta t_\ell$.

We first fix $\Delta t_1 = \epsilon^2$ and solve \eqref{eq:coarse_threshold} for positive real values of $M>1$ for which the inequality becomes an equality, taking different values of $\epsilon$ and $t^*$.  In Figure~\ref{fig:coarse_level_threshold} we show these unique, numerically computed $M$-values. It is clear from the range of the color bar that this $M$-value consistently exists and lies within the interval $[6,13]$, meaning that any sign change in the r.h.s. of \eqref{eq:coarse_threshold} lies in this range. Next, we evaluate the left hand side of \eqref{eq:coarse_threshold} for $M=6$ and $M=13$ and plot the results in Figure~\ref{fig:coarse_level_sign}. From this figure, it is clear that \eqref{eq:coarse_threshold} holds for small values of $M$, but no longer holds once the threshold value from Figure~\ref{fig:coarse_level_threshold} is passed, meaning that \eqref{eq:coarse_threshold} consistently holds once $M$ is sufficiently large. We can thus conclude that it makes sense, from a theoretical point of view, to add an extra coarse level, if it is sufficiently coarse in comparison with the level with $\Delta t = \epsilon^2$.

\setlength\oldfw{\figurewidth}
\setlength\oldfh{\figureheight}
\setlength\figurewidth{0.4\figurewidth}
\setlength\figureheight{0.8\figureheight}
\tikzexternalenable
\begin{figure}
\centering
\input{Coarse_M_threshold.tex}
\caption{$M$-value causing a sign change in the left hand side of \eqref{eq:coarse_threshold}.\label{fig:coarse_level_threshold}}
\end{figure}

\begin{figure}
\makebox[\textwidth][c]{
\centering
\makebox[1.5\textwidth][c]{
\centering
\subfloat[$M=6$\label{fig:coarse_level_sign6}]{
\centering
\makebox[0.55\textwidth][c]{
\input{M6_cost_flat.tex}
}}
\subfloat[$M=13$\label{fig:coarse_level_sign13}]{
\centering
\makebox[0.55\textwidth][c]{
\input{M13_cost_flat.tex}
}}
}}
\caption{Left hand side of \eqref{eq:coarse_threshold}.\label{fig:coarse_level_sign}}
\end{figure}
\tikzexternaldisable
\setlength\figurewidth{\oldfw}
\setlength\figureheight{\oldfh}

\bibliographystyle{spmpsci}
\bibliography{APMLMC}

\begin{thebibliography}{10}
\providecommand{\url}[1]{{#1}}
\providecommand{\urlprefix}{URL }
\expandafter\ifx\csname urlstyle\endcsname\relax
  \providecommand{\doi}[1]{DOI~\discretionary{}{}{}#1}\else
  \providecommand{\doi}{DOI~\discretionary{}{}{}\begingroup
  \urlstyle{rm}\Url}\fi

\bibitem{Anderson2011}
Anderson, D.F., Higham, D.J.: {Multilevel Monte Carlo for Continuous Time
  Markov Chains, with Applications in Biochemical Kinetics}.
\newblock Multiscale Modeling {\&} Simulation \textbf{10}(1), 146--179 (2012)

\bibitem{Bennoune2008}
Bennoune, M., Lemou, M., Mieussens, L.: {Uniformly stable numerical schemes for
  the Boltzmann equation preserving the compressible Navier–Stokes
  asymptotics}.
\newblock Journal of Computational Physics \textbf{227}(8), 3781--3803 (2008)

\bibitem{Bhatnagar1954}
Bhatnagar, P.L., Gross, E.P., Krook, M.: {A Model for Collision Processes in
  Gases. I. Small Amplitude Processes in Charged and Neutral One-Component
  Systems}.
\newblock Physical Review \textbf{94}(3), 511--525 (1954)

\bibitem{Birdsall2004}
Birdsall, C.K., Langdon, A.B.: {Plasma Physics via Computer Simulation}.
\newblock Series in Plasma Physics and Fluid Dynamics. Taylor {\&} Francis
  (2004)

\bibitem{Blondeel2020}
Blondeel, P., Robbe, P., Van~hoorickx, C., Fran{\c{c}}ois, S., Lombaert, G.,
  Vandewalle, S.: {p-Refined Multilevel Quasi-Monte Carlo for Galerkin Finite
  Element Methods with Applications in Civil Engineering}.
\newblock Algorithms \textbf{13}(5), 110 (2020)

\bibitem{Boscarino2013}
Boscarino, S., Pareschi, L., Russo, G.: {Implicit-Explicit Runge--Kutta Schemes
  for Hyperbolic Systems and Kinetic Equations in the Diffusion Limit}.
\newblock SIAM Journal on Scientific Computing \textbf{35}(1), A22--A51 (2013)

\bibitem{Buet2007}
Buet, C., Cordier, S.: {An asymptotic preserving scheme for hydrodynamics
  radiative transfer models}.
\newblock Numerische Mathematik \textbf{108}(2), 199--221 (2007)

\bibitem{Cercignani1988}
Cercignani, C.: {The Boltzmann Equation and Its Applications}, \emph{Applied
  Mathematical Sciences}, vol.~67.
\newblock Springer New York, New York, NY (1988)

\bibitem{Cliffe2011}
Cliffe, K.A., Giles, M.B., Scheichl, R., Teckentrup, A.L.: {Multilevel Monte
  Carlo methods and applications to elliptic PDEs with random coefficients}.
\newblock Computing and Visualization in Science \textbf{14}(1), 3--15 (2011)

\bibitem{Crestetto2018}
Crestetto, A., Crouseilles, N., Lemou, M.: {A particle micro-macro
  decomposition based numerical scheme for collisional kinetic equations in the
  diffusion scaling}.
\newblock Communications in Mathematical Sciences \textbf{16}(4), 887--911
  (2018)

\bibitem{Crouseilles2011}
Crouseilles, N., Lemou, M.: {An asymptotic preserving scheme based on a
  micro-macro decomposition for Collisional Vlasov equations: diffusion and
  high-field scaling limits}.
\newblock Kinetic and Related Models \textbf{4}(2), 441--477 (2011)

\bibitem{Degond2011}
Degond, P., Dimarco, G., Pareschi, L.: {The moment-guided Monte Carlo method}.
\newblock International Journal for Numerical Methods in Fluids \textbf{67}(2),
  189--213 (2011)

\bibitem{Dimarco2008}
Dimarco, G., Pareschi, L.: {Hybrid Multiscale Methods II. Kinetic Equations}.
\newblock Multiscale Modeling {\&} Simulation \textbf{6}(4), 1169--1197 (2008)

\bibitem{Dimarco2010}
Dimarco, G., Pareschi, L.: {Fluid Solver Independent Hybrid Methods for
  Multiscale Kinetic Equations}.
\newblock SIAM Journal on Scientific Computing \textbf{32}(2), 603--634 (2010)

\bibitem{Dimarco2012}
Dimarco, G., Pareschi, L.: {High order asymptotic-preserving schemes for the
  Boltzmann equation}.
\newblock Comptes Rendus Mathematique \textbf{350}(9-10), 481--486 (2012)

\bibitem{Dimarco2014}
Dimarco, G., Pareschi, L.: {Numerical methods for kinetic equations}.
\newblock Acta Numerica \textbf{23}, 369--520 (2014)

\bibitem{Dimarco2018}
Dimarco, G., Pareschi, L., Samaey, G.: {Asymptotic-Preserving Monte Carlo
  Methods for Transport Equations in the Diffusive Limit}.
\newblock SIAM Journal on Scientific Computing \textbf{40}(1), A504--A528
  (2018)

\bibitem{Giles2008}
Giles, M.B.: {Multilevel Monte Carlo Path Simulation}.
\newblock Operations Research \textbf{56}(3), 607--617 (2008)

\bibitem{Giles2015}
Giles, M.B.: {Multilevel Monte Carlo methods}.
\newblock Acta Numerica \textbf{24}, 259--328 (2015)

\bibitem{Goldstein1951}
Goldstein, S.: {On Diffusion by Discontinuous Movements, and on the Telegraph
  Equation}.
\newblock The Quarterly Journal of Mechanics and Applied Mathematics
  \textbf{4}(2), 129--156 (1951)

\bibitem{Gosse2002}
Gosse, L., Toscani, G.: {An asymptotic-preserving well-balanced scheme for the
  hyperbolic heat equations}.
\newblock Comptes Rendus Mathematique \textbf{334}(4), 337--342 (2002)

\bibitem{Haji-Ali2016}
Haji-Ali, A.L., Nobile, F., Tempone, R.: {Multi-index Monte Carlo: when
  sparsity meets sampling}.
\newblock Numerische Mathematik \textbf{132}(4), 767--806 (2016)

\bibitem{Heinrich2001}
Heinrich, S.: {Multilevel monte carlo methods}.
\newblock In: Lecture Notes in Computer Science (including subseries Lecture
  Notes in Artificial Intelligence and Lecture Notes in Bioinformatics), vol.
  2179, pp. 58--67. Springer Verlag (2001)

\bibitem{Hoel2016}
Hoel, H., Law, K.J.H., Tempone, R.: {Multilevel ensemble Kalman filtering}.
\newblock SIAM Journal on Numerical Analysis \textbf{54}(3), 1813--1839 (2016)

\bibitem{Jin1999}
Jin, S.: {Efficient Asymptotic-Preserving (AP) Schemes For Some Multiscale
  Kinetic Equations}.
\newblock SIAM Journal on Scientific Computing \textbf{21}(2), 441--454 (1999)

\bibitem{Jin1998}
Jin, S., Pareschi, L., Toscani, G.: {Diffusive Relaxation Schemes for
  Multiscale Discrete-Velocity Kinetic Equations}.
\newblock SIAM Journal on Numerical Analysis \textbf{35}(6), 2405--2439 (1998)

\bibitem{Jin2000}
Jin, S., Pareschi, L., Toscani, G.: {Uniformly Accurate Diffusive Relaxation
  Schemes for Multiscale Transport Equations}.
\newblock SIAM Journal on Numerical Analysis \textbf{38}(3), 913--936 (2000)

\bibitem{Klar1998}
Klar, A.: {An Asymptotic-Induced Scheme for Nonstationary Transport Equations
  in the Diffusive Limit}.
\newblock SIAM Journal on Numerical Analysis \textbf{35}(3), 1073--1094 (1998)

\bibitem{Klar1999}
Klar, A.: {A Numerical Method for Kinetic Semiconductor Equations in the
  Drift-Diffusion Limit}.
\newblock SIAM Journal on Scientific Computing \textbf{20}(5), 1696--1712
  (1999)

\bibitem{Lapeyre2003}
Lapeyre, B., Pardoux, {\'{E}}., Sentis, R., Craig, A.W., Craig, F.:
  {Introduction to Monte Carlo methods for transport and diffusion equations},
  vol.~6.
\newblock Oxford University Press (2003)

\bibitem{Larsen1974}
Larsen, E.W., Keller, J.B.: {Asymptotic solution of neutron transport problems
  for small mean free paths}.
\newblock Journal of Mathematical Physics \textbf{15}(1), 75--81 (1974)

\bibitem{Lemou2008}
Lemou, M., Mieussens, L.: {A New Asymptotic Preserving Scheme Based on
  Micro-Macro Formulation for Linear Kinetic Equations in the Diffusion Limit}.
\newblock SIAM Journal on Scientific Computing \textbf{31}(1), 334--368 (2008)

\bibitem{Loevbak2020}
L{\o}vbak, E., Samaey, G., Vandewalle, S.: {A Multilevel Monte Carlo
  Asymptotic-Preserving Particle Method for Kinetic Equations in the Diffusion
  Limit}.
\newblock In: P.~L'Ecuyer, B.~Tuffin (eds.) Monte Carlo and Quasi-Monte Carlo
  Methods 2018, pp. 383--402. Springer (2020)

\bibitem{Mortier2019}
Mortier, B., Baelmans, M., Samaey, G.: {Kinetic-diffusion asymptotic-preserving
  Monte Carlo algorithms for plasma edge neutral simulation}.
\newblock Contributions to Plasma Physics p. e201900134 (2019)

\bibitem{Naldi2000}
Naldi, G., Pareschi, L.: {Numerical Schemes for Hyperbolic Systems of
  Conservation Laws with Stiff Diffusive Relaxation}.
\newblock SIAM Journal on Numerical Analysis \textbf{37}(4), 1246--1270 (2000)

\bibitem{Pareschi2005a}
Pareschi, L.: {Hybrid Multiscale Methods for Hyperbolic and Kinetic Problems}.
\newblock ESAIM: Proceedings \textbf{15}, 87--120 (2005)

\bibitem{Pareschi1999}
Pareschi, L., Caflisch, R.E.: {An Implicit Monte Carlo Method for Rarefied Gas
  Dynamics}.
\newblock Journal of Computational Physics \textbf{154}(1), 90--116 (1999)

\bibitem{Pareschi2001}
Pareschi, L., Russo, G.: {An introduction to Monte Carlo method for the
  Boltzmann equation}.
\newblock ESAIM: Proceedings \textbf{10}, 35--75 (2001)

\bibitem{Pareschi2005}
Pareschi, L., Trazzi, S.: {Numerical solution of the Boltzmann equation by time
  relaxed Monte Carlo (TRMC) methods}.
\newblock International Journal for Numerical Methods in Fluids \textbf{48}(9),
  947--983 (2005)

\bibitem{Pope1981}
Pope, S.B.: {A Monte Carlo Method for the PDF Equations of Turbulent Reactive
  Flow}.
\newblock Combustion Science and Technology \textbf{25}(5-6), 159--174 (1981)

\bibitem{Rousset2011c}
Rousset, M., Samaey, G.: {Simulating individual-based models of bacterial
  chemotaxis with asymptotic variance reduction}.
\newblock Mathematical Models and Methods in Applied Sciences \textbf{23}(12),
  2155--2191 (2011)

\bibitem{VanBarel2019}
{Van Barel}, A., Vandewalle, S.: {Robust Optimization of PDEs with Random
  Coefficients Using a Multilevel Monte Carlo Method}.
\newblock SIAM/ASA Journal on Uncertainty Quantification \textbf{7}(1),
  174--202 (2019)

\end{thebibliography}

\includepdf[pages=-]{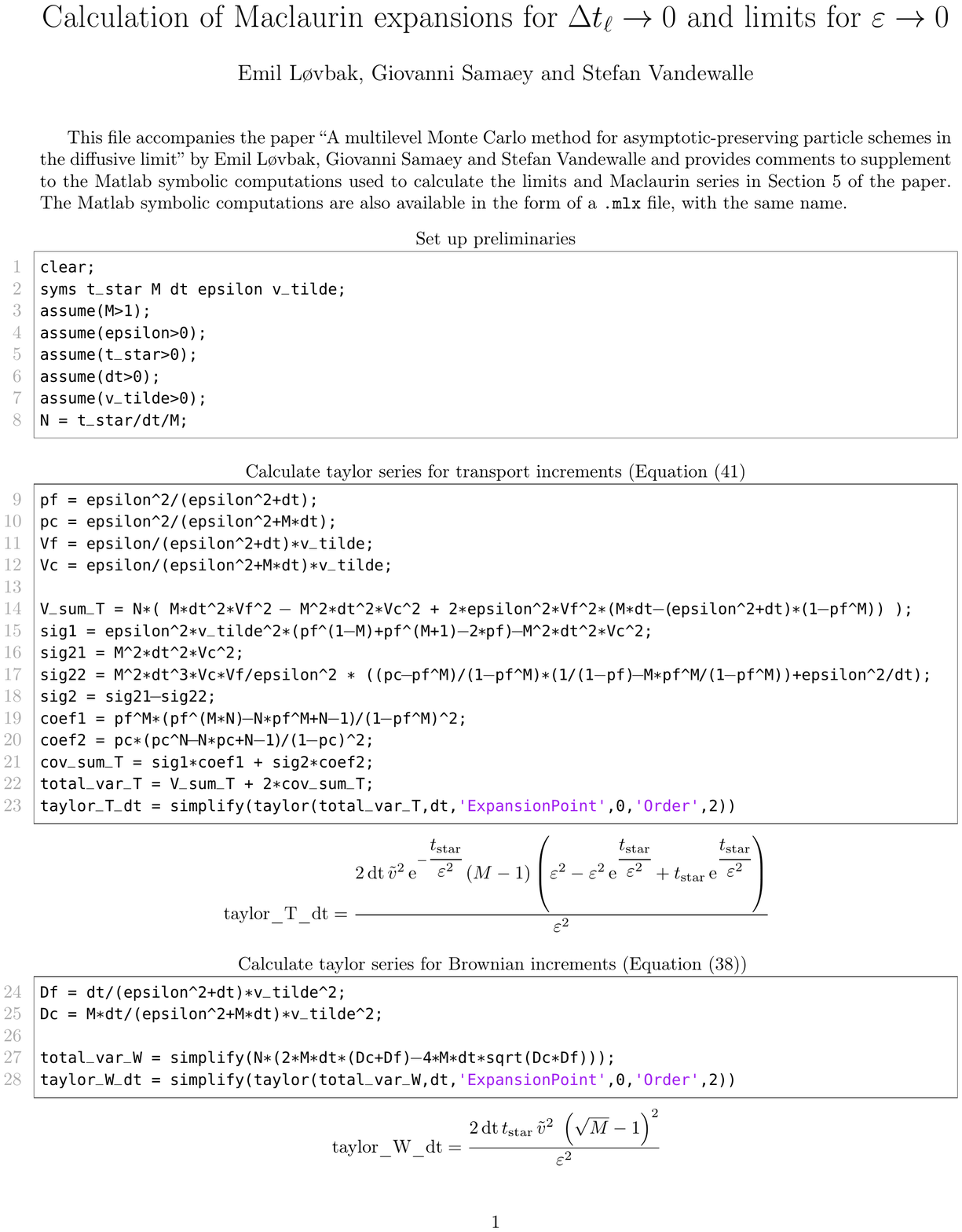}

\end{document}